\author{Ling Long, Fang-Ting Tu, Noriko Yui, Wadim Zudilin}
\title[Supercongruences for Hypergeometric Calabi--Yau Threefolds]%
{Supercongruences for Rigid Hypergeometric Calabi--Yau~Threefolds}
\address{Louisiana State University, Baton Rouge, LA 70803, USA}
\email{llong@lsu.edu}
\address{Louisiana State University, Baton Rouge, LA 70803, USA}
\email{ftu@lsu.edu}
\address{Department of Mathematics and Statistics, Queen's University, Kingston, ON, K7L3N6, Canada}
\email{yui@queensu.ca}
\address{Department of Mathematics, IMAPP, Radboud University, PO Box 9010, 6500 GL Nijmegen, The Netherlands}
\email{w.zudilin@math.ru.nl}
\address{Laboratory of Mirror Symmetry and Automorphic Forms, National Research University Higher School of Economics, 6 Usacheva str., 119048 Moscow, Russia}
\email{wzudilin@gmail.com}
\thanks{Long is supported by NSF DMS \#1602047 and Tu is supported by a start-up fund at Louisiana State University.
	Both Long and Tu received travel funding from NSF DMS \#1642598 to attend the MATRIX workshop, Australia.
	Yui is partially supported by a Discovery Grant from NSERC, Canada.
	Zudilin was supported by funding from Queen's University to attend BIRS \#16w5009 workshop which initiated this project.
	Zudilin is partially supported by Laboratory of Mirror Symmetry NRU HSE, RF government grant, ag.\ no.\ 14.641.31.0001.}
\subjclass[2010]{11F33, 11T24, 14G10, 14J32, 14J33, 33C20}
\keywords{Rigid Calabi--Yau threefold; hypergeometric motive; supercongruence; hypergeometric function; character sum; Picard--Fuchs differential equation; modular form; $p$-adic analysis}
\newcommand{\Res}{\operatorname{Res}}
\let\wt\widetilde
\newcommand\balpha{{\boldsymbol\alpha}}
\newcommand\bbeta{{\boldsymbol\beta}}
\newtheorem{Theorem}{Theorem}
\newtheorem{Lemma}{Lemma}
\newtheorem{Proposition}{Proposition}
\theoremstyle{definition}
\def\PFQ#1#2#3#4#5{{}_{#1}F_{#2}\left[\begin{matrix}#3 \\[1.5pt] #4\end{matrix} \, ; \, #5\right]}
\def\[{\left[}
\def\]{\right]}
\def\({\left(}
\def\){\right)}
\def \ff{\mathfrak f}
\def\C{\mathbb{C}}
\def\Z{\mathbb{Z}}
\def\Q{\mathbb{Q}}
\def\Ql{\mathbb{Q}_{\ell}}
\def\F{\mathbb{F}}
\def\O{\mathcal{O}}
\def\P{\mathbb{P}}
\def \l {\lambda}
\def\lf{\lfloor}
\def\rf{\rfloor}
\def \eps{\varepsilon}
\def\G{\Gamma}
\renewcommand{\d}{{\mathrm d}}
\newcommand{\Frob}{\operatorname{Frob}}
\newcommand \cy {\color{black}}
\newcommand \bk {\color{black}}
\date{\today}
\begin{document}
	
	\begin{abstract}
		We establish the supercongruences for the fourteen rigid hypergeometric Calabi--Yau threefolds over $\mathbb Q$ conjectured by Rodriguez-Villegas in 2003.
		%Two different approaches are implemented, and they both successfully apply to all the fourteen supercongruences.
		Our first method is based on Dwork's theory of $p$-adic unit roots and it allows us to establish the supercongruences \cy between the truncated hypergeometric series and the corresponding unit roots \bk for ordinary primes.
		The other method makes use of the theory of hypergeometric motives, in particular, adapts
		the techniques from the recent work of Beukers, Cohen and Mellit on finite hypergeometric sums over $\mathbb Q$.
		Essential ingredients in executing the both approaches are the modularity of the underlying Calabi--Yau threefolds
		and a $p$-adic perturbation method applied to hypergeometric functions.
	\end{abstract}
	
	\maketitle

	\section{Introduction}
	\label{sec1}
	
	The purpose of this paper is to establish the supercongruences satisfied by fourteen rigid Calabi--Yau threefolds. Each of those threefolds is a particular instance
	of a one-parameter ``hypergeometric'' family of Calabi--Yau threefolds\,---\,a family whose periods are solutions of the hypergeometric
	equation with parameters $\balpha=\{r_1, 1-r_1, r_2, 1-r_2\}$ and $\bbeta=\{1,1,1,1\}$ viewed as multi-sets,
	where the fourteen possible rational pairs $(r_1,r_2)$ are listed in Table~\ref{tab1} below.
	The corresponding families $V_{\balpha}(\psi)$
	are realized as either one-parameter families of hypersurfaces in weighted projective spaces
	or complete intersections of several such families of hypersurfaces (all explicitly recorded in Tables~\ref{tab3} and~\ref{tab4} below);
	see \cite{Candelas, KT, LT, Morrison} for related details.
	The mirrors of these Calabi--Yau threefolds exist and they are one-parameter families $\hat{\mathcal V}_\balpha(\lambda)$ of Calabi--Yau threefolds with
	the Hodge number $h^{2,1}(\hat{\mathcal V}_\balpha(\lambda))=1$ for generic~$\lambda$. 
	In all the fourteen cases, the Picard--Fuchs differential equations of $\hat{\mathcal V}_\balpha(\lambda)$
	are precisely the \emph{same} order 4 hypergeometric differential equations attached to the data $\balpha$ and $\bbeta$
	(see \cite{ACDZ} and \cite{CYY} for more information on this aspect).
	Their particular analytical solution at the maximally unipotent point\,---\,singularity $\l=0$\,---\,is given by the hypergeometric function
	$$
	\PFQ{4}{3}{\balpha}{\bbeta}{\l}
	=\PFQ{4}{3}{\alpha_1 ,\, \alpha_2 ,\, \alpha_3 ,\, \alpha_4}{\; 1 ,\; 1 ,\; 1}{\l}
	:=\sum_{k=0}^\infty\frac{(\alpha_1)_k(\alpha_2)_k(\alpha_3)_k(\alpha_4)_k}{k!^4}\,\l^k,
	$$
	where $(\alpha)_k:=\G(\alpha+k)/\G(\alpha)=\alpha(\alpha+1)\dotsb(\alpha+k-1)$, $(\alpha)_0:=1$ denotes the Pochhammer symbol (rising factorial) and $\G(x)$ is the standard Gamma function;
	a general definition of classical hypergeometric functions is given in \eqref{eq:HGS} below.
	
	One of the most known and developed examples is the quintic threefold
	$$
	V(\psi):=V_{\{\frac 15,\frac 25,\frac 35,\frac 45\}}(\psi):\quad X_1^5+X_2^5+X_3^5+X_4^5+X_5^5-5\psi\,X_1X_2X_3X_4X_5=0
	$$
	in $\P^4$. For a fixed $\psi$, the equation admits the action of the discrete group
	$$
	G=\{(\zeta_5^{a_1},\dots, \zeta_5^{a_5}):a_1+\dots + a_5\equiv 0 \mod 5\}\cong (\Z/5\Z)^4
	$$
	via the map $(X_1,\dots, X_5)\mapsto(\zeta_5^{a_1}X_1,\dots,\zeta_5^{a_5}X_5)$,
	where $\zeta_5=e^{2\pi i/5}$ is the primitive $5$-th root of unity.
	Its mirror threefold is constructed from the orbifold $V(\psi)/G$.
	One way to realize the quotient is letting $y_j=X_j^5$ for $j=1,\dots,5$, $x_1=5\psi X_1\cdots X_5$ and $\l=\psi^{-5}$;
	the image is
	$$
	\hat{\mathcal V}(\l):=\hat{\mathcal V}_{\{\frac 15,\frac 25,\frac 35,\frac 45\}}(\l):\quad
	y_1+\dots+ y_5-x_1=0, \quad 5^{-5} \l x_1^5= y_1\cdots y_5.
	$$
	Resolving singularities, one gets a Calabi--Yau threefold
	$\overline{\hat{\mathcal V}(\l)}$ with generic $h^{2,1}$ equal to~$1$
	(see \cite{Candelas,Cox15} for details).
	By~\cite{Candelas}, the Picard--Fuchs differential operator of this mirror Calabi--Yau threefold is given by
	$$
	\theta^4-5^{-4}\lambda(5\theta+1)(5\theta+2)(5\theta+3)(5\theta+4),
	\quad \text{where}\; \theta:=\l \frac{d}{d\l},
	$$
	whose unique (up to scalar) holomorphic solution near zero is given by the hypergeometric function
	$$
	\sum_{k=0}^\infty\frac{(5k)!}{k!^5}\,(5^{-5}\l)^k
	=\PFQ{4}{3}{\frac 15 ,\, \frac 25 ,\, \frac 35 ,\, \frac 45}{1 ,\, 1 ,\, 1}{\l}.
	$$
	When $\l=1$, the corresponding Calabi--Yau threefold
	$\overline{\hat{\mathcal V}(1)}$ is defined over $\Q$ and
	it becomes rigid, that is, $h^{2,1}(\overline{\hat{\mathcal V}(1)})=0$
	meaning that its third Betti number $B_3=\dim H^3(\overline{\hat{\mathcal V}(1)},\C)$ is~2.
	It is shown by Schoen \cite{Schoen} that the $\ell$-adic Galois representation (of the absolute Galois group
	$G_\Q:=\operatorname{Gal}(\overline \Q/\Q)$) arising from \'etale cohomology
	$H^3_{\text{et}}(\overline{\hat{\mathcal V}(1)},\Ql)$ is \emph{modular} in the sense that it is isomorphic to
	the Galois representation attached to a weight~$4$ level $25$ Hecke eigenform $f=f_{\{\frac 15,\frac 25,\frac 35,\frac 45\}}$,
	labeled 25.4.a.b in the database \cite{LMFDB}.
	An expression of $f$ through the Dedekind eta-function
	$\eta(\tau)=q^{1/24}\prod_{n=1}^\infty(1-q^n)$, where $q=e^{2\pi i\tau}$,
	can be found in Table~\ref{tab1}; there we implement the standard notation $\eta_m=\eta_m(\tau)$ for $\eta(m\tau)$.
	As shown by Wan in \cite{Wan}, in the ordinary prime case both $V_\balpha(1)$ and its mirror  $\hat{\mathcal V}_\balpha(1)$ share the same unit root.
	
	Similar properties hold for the other one-parameter families $V_{\balpha}(\psi)$ listed in Tables~\ref{tab3} and~\ref{tab4}. Their corresponding mirror families $\hat{\mathcal V}_{\balpha}(\l)$ are provided in Table~\ref{table: tab5}.
	When $\l=1$, the corresponding mirror fiber $\overline{\hat{\mathcal V}_{\balpha}(1)}$ happens to be a rigid Calabi--Yau threefold defined over $\Q$,
	hence its $L$-function computed from the third \'etale cohomology group is also modular by \cite{Dieu,GY} (see Theorem~\ref{thm:mod} below).
	In what follows,
	$$
	f_{\balpha}(\tau)=\sum_{n=1}^\infty a_nq^n,
	\quad\text{where}\; a_1=1,
	$$
	denotes the weight 4 (normalized) modular form corresponding to
	$\overline{\hat{\mathcal V}_\balpha(1)}$.
	
	Based on some numerical evidence, Rodriguez-Villegas came up in \cite{RV} with a simple recipe to compute
	the $p$-th Fourier coefficients $a_p=a_p(f_{\balpha})$ (therefore, all of them) for these fourteen rigid Calabi--Yau threefolds.
	Proofs of his conjectural (super)congruences are the main result of the paper.
	
	\begin{Theorem}\label{thm:main}
		Let $r_1,r_2\in\bigl\{\frac 12,\frac13,\frac14,\frac16\bigr\}$
		or $(r_1,r_2)\in \bigl\{\bigl(\frac 15,\frac 25\bigr),\bigl(\frac 18,\frac 38\bigr), \bigl(\frac 1{10},\frac 3{10}\bigr), \bigl(\frac 1{12},\frac 5{12}\bigr)\bigr\}$.
		Then for each prime $p>5$, the truncated hypergeometric series
		\begin{equation}
		\PFQ{4}{3}{r_1 ,\, 1-r_1 ,\, r_2 ,\, 1-r_2 }{\quad 1 ,\quad 1 ,\quad 1}{1}_{p-1}
		=\sum_{k=0}^{p-1}\frac{(r_1)_k(1-r_1)_k(r_2)_k(1-r_2)_k}{k!^4}
		\label{eq:4F3trunc}
		\end{equation}
		satisfies
		\begin{equation}
		\PFQ{4}{3}{r_1 ,\, 1-r_1 ,\, r_2 ,\, 1-r_2 }{\quad 1 ,\quad 1 ,\quad 1}{1}_{p-1}
		\equiv a_p(f_{\{r_1,1-r_1,r_2,1-r_2\}}) \mod p^3.
		\label{eq:RV-1}
		\end{equation}
	\end{Theorem}
	
	The cases $(r_1,r_2)=(\frac 12,\frac 12)$ and $(\frac 15,\frac 25)$ have been obtained earlier by Kilbourn \cite{Kilbourn} and McCarthy \cite{McCarthy-RV}, respectively.
	Furthermore, the reduction of case $(r_1,r_2)=(\frac 12,\frac 14)$ to Kilbourn's result in \cite{Kilbourn} has been performed by McCarthy and Fuselier \cite{FM}.
	The remaining eleven cases are new. Our methods provide uniform proofs of all the fourteen hypergeometric cases.
	
	Notice that the Fourier coefficients of $f_{\balpha}$ can be related to the point counting on the corresponding variety over finite fields.
	A general strategy for this is set up in the recent paper \cite{BCM} by Beukers, Cohen and Mellit.
	In particular, the point counting on
	$\overline{\hat{\mathcal V}_{\{\frac 15,\frac 25,\frac 35,\frac 45\}}(\l)}$ over any finite field of characteristic
	different from~5 is given explicitly in terms of hypergeometric functions over finite fields, also known as finite hypergeometric functions.
	
	A unifying perspective underneath both classical and finite hypergeometric functions is absorbed by the notion of hypergeometric motives
	\cite{RVRW, RV2, Watkins}; see  \cite{DPVZ, H-T, LPSX, Long20, LLT} for some recent progress and further development of these theme. 
	Postponing details about them to Section~\ref{ss:charactersums}, we only indicate that the related hypergeometric motives
	in our settings are attached to the hypergeometric data
	\begin{equation}
	\l=1,\quad \balpha= \{r_1,1-r_1,r_2,1-r_2\},\quad \bbeta=\{1,1,1,1\},
	\label{hyp:main}
	\end{equation}
	where either $r_1,r_2\in\bigl\{\frac 12,\frac13,\frac14,\frac16\bigr\}$
	or $(r_1,r_2)\in \bigl\{\bigl(\frac 15,\frac 25\bigr),\bigl(\frac 18,\frac 38\bigr), \bigl(\frac 1{10},\frac 3{10}\bigr), \bigl(\frac 1{12},\frac 5{12}\bigr)\bigr\}$.
	These choices guarantee that the corresponding motives are all defined over $\Q$.
	For any prime $p>5$, the corresponding character sum $H_p(\balpha,\bbeta;1)$ is explicitly defined in~\eqref{eq:H2} below.
	
	Our next result gives an alternative expression for the Fourier coefficients of Hecke eigenform $f_{\balpha}$ in terms of $H_p(\balpha,\bbeta;1)$ and the information about its level.
	
	\begin{Theorem}
		\label{thm:level}
		Let $p>5$ be a prime and $\balpha$ and $\bbeta$  as above. Then the following equality holds\textup:
		\begin{equation}
		H_p(\balpha,\bbeta;1)=a_p(f_{\balpha})+\chi_{\balpha}(p)\cdot p,
		\label{eq:Haa}
		\end{equation}
		where $a_p(f_{\balpha})$ is the $p$-th coefficient of the normalized Hecke eigenform and  $\chi_{\balpha}$ is a Dirichlet character of order at most~$2$, whose precise description is given in Section~\textup{\ref{ss:char}}.
		The levels of $f_{\balpha}$ and the characters $\chi_\balpha$ are listed in Table~\textup{\ref{tab1}}
		\textup(and in Lemma~\textup{\ref{lem:char}} below\textup).
	\end{Theorem}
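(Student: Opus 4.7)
The plan is to realise both sides of~\eqref{eq:Haa} as Frobenius traces on pieces of a single hypergeometric motive attached to the data $(\balpha,\bbeta,1)$, and then use the modularity of the rigid Calabi--Yau threefold $\overline{\mathcal V_\balpha(1)}$ to identify its principal summand with the modular motive of $f_\balpha$, so that the complementary summand contributes the correction $\chi_\balpha(p)\cdot p$.

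First I would invoke the framework of Beukers--Cohen--Mellit~\cite{BCM}: for $\l$ a smooth value of the mirror family, their techniques give an explicit expression for the point count $\#\overline{\mathcal V_\balpha(\l)}(\F_p)$ in terms of the finite hypergeometric character sum $H_p(\balpha,\bbeta;\l)$ together with polynomial corrections in $p$ coming from algebraic cycles and from boundary strata of the toric or complete-intersection model. Specialising to $\l=1$ is the delicate step because the fibre degenerates; I would handle it by a $p$-adic perturbation argument (flagged in the abstract), passing from smooth nearby fibres to the resolved singular fibre and tracking the extra Tate classes that arise on the resolution. The Grothendieck--Lefschetz trace formula combined with Poincar\'e duality and the rigidity condition $h^{2,1}(\overline{\mathcal V_\balpha(1)})=0$ then identifies $\tr(\Frob_p\mid H^3_{\text{et}}(\overline{\mathcal V_\balpha(1)},\Ql))$ with $a_p(f_\balpha)$, by the modularity theorem~\ref{thm:mod} originating in~\cite{Dieu,GY,Schoen}. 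Matching the BCM expression with this trace isolates the relation $H_p(\balpha,\bbeta;1)=a_p(f_\balpha)+\chi_\balpha(p)\cdot p$, with the correction absorbing the net contribution of the additional algebraic cycles on the resolution.

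The character $\chi_\balpha$ has order at most $2$ because the algebraic cycle responsible for the correction is defined over at worst the quadratic field naturally attached to $\balpha$ (for example $\Q(i)$ when $r_i=\tfrac14$, $\Q(\sqrt{-3})$ when $r_i\in\{\tfrac13,\tfrac16\}$, and so on). The case-by-case determination of $\chi_\balpha$ and of the level of $f_\balpha$ is then a bookkeeping exercise: the level is forced by the primes of bad reduction of $\overline{\mathcal V_\balpha(1)}$, essentially those dividing the common denominator of $r_1,r_2$, and $\chi_\balpha$ is pinned down by the splitting behaviour of those primes in the quadratic field singled out above. The resulting data is packaged as Table~\ref{tab1} and Lemma~\ref{lem:char}.

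The main obstacle I anticipate is the first step: transporting the BCM character-sum identity from a generic smooth fibre to the degenerate fibre at $\l=1$, where the hypergeometric differential operator has a logarithmic singularity. The $p$-adic perturbation method needs to be calibrated precisely enough to extract the clean Tate-twisted character $\chi_\balpha(p)\cdot p$ rather than a spurious higher-weight correction, and to rule out contributions from exceptional divisors of the resolution that would spoil the rank-one shape of the correction.
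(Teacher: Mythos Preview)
Your outline takes a substantially more geometric route than the paper, and in doing so misidentifies where the real difficulty lies.

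The paper does \emph{not} argue by deforming from a smooth fibre to $\l=1$ and tracking exceptional cycles on a resolution. Instead it invokes Katz's construction of the hypergeometric $\ell$-adic sheaf directly: there is a Galois representation $\rho_{\balpha,\ell}$ of $G_K$ (for the appropriate cyclotomic field $K$) whose Frobenius traces are the character sums $H_p(\balpha,\bbeta;1)$ for split primes, and this extends to $G_\Q$ with $\mathcal V_\balpha(1)$ as a geometric realisation (via~\cite{BCM}). The splitting $H_p=a_p(f_\balpha)+\chi_\balpha(p)\cdot p$ then comes from the known structural fact (Roberts--Rodriguez-Villegas--Watkins~\cite{RVRW}, Watkins~\cite{Watkins}) that at the singular argument $\l=1$ this representation decomposes as a $2$-dimensional piece of weight $3$ plus a $1$-dimensional piece of weight~$2$; the degree is confirmed computationally in \texttt{Magma}. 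Modularity of $2$-dimensional geometric Galois representations then produces $f_\balpha$, and Serre's bounds on conductor exponents together with a finite \texttt{Magma} search pin down the level. The character $\chi_\balpha$ is not read off from the geometry of a quadratic field at all: it is computed directly in Lemma~\ref{lem:char} as $\prod_j\Gamma_p(r_j)=(-1)^{a_1+a_2}$ via the reflection formula for $\Gamma_p$, case by case.

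Two specific corrections to your plan: first, the $p$-adic perturbation method flagged in the abstract is deployed for Theorem~\ref{thm:main} (the supercongruence), not for Theorem~\ref{thm:level}; there is no ``passage from smooth nearby fibres'' in the proof of the latter, because both the BCM point-count formula and the Katz representation are already available at $\l=1$. Second, your anticipated obstacle---extracting a clean Tate correction from the resolution---is exactly the step the paper avoids by working motivically and then appealing to computation. Your approach could in principle be made to work, but it would require an explicit desingularisation and cycle-class analysis in each of the fourteen cases, which is considerably heavier than the Galois-representation-plus-\texttt{Magma} argument the paper actually gives.
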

	
	In Table~\ref{tab1}, the notation $\chi_d$ in the final column stands for the quadratic character $\chi_d(p)=\bigl(\frac dp\bigr)$,
	the Legendre symbol, for unramified $p$; in particular, $\chi_1$ denotes the trivial character.
	In addition, Table~\ref{tab1} records the instances of known formulas for the eigenforms in the second column.
	
	\begin{table}[h]
		\caption{The Hecke eigenforms for rigid hypergeometric Calabi--Yau threefolds}
		\begin{center}
			\begin{tabular}{|c|c||l|c|c|}
				\hline
				$(r_1,r_2)$ & $f_{\boldsymbol\alpha}(\tau)$ &\quad\;\;level&LMFDB label&$\chi_{\boldsymbol\alpha}$\\ \hline\hline
				$(\tfrac12,\tfrac12)$ & $\eta_2^4\eta_4^4$ & $\phantom{00}8=2^3\vphantom{|^{0^0}}$ &
				\href{http://www.lmfdb.org/ModularForm/GL2/Q/holomorphic/8/4/a/a/}{\texttt{8.4.a.a}}
				& $\chi_1$ \\[.5mm] \hline\vphantom{$\big|^0$}%
				$(\tfrac12,\tfrac13)$ & $\eta_6^{14}/(\eta_2^3\eta_{18}^3)-3\eta_2^3\eta_6^2\eta_{18}^3$ & $\phantom036=2^2\cdot3^2$ &
				\href{http://www.lmfdb.org/ModularForm/GL2/Q/holomorphic/36/4/a/a/}{\texttt{36.4.a.a}}
				& $\chi_{3}$ \\[.5mm] \hline\vphantom{$\big|^0$}%
				% GP-Pari: q*eta(q^2)^(-3)*eta(q^6)^14*eta(q^18)^(-3)-3*q^3*eta(q^2)^3*eta(q^6)^2*eta(q^18)^3
				$(\tfrac12,\tfrac14)$ & $\eta_4^{16}/(\eta_2^4\eta_8^4)$ & $\phantom016=2^4$ &
				\href{http://www.lmfdb.org/ModularForm/GL2/Q/holomorphic/16/4/a/a/}{\texttt{16.4.a.a}}
				& $\chi_{2}$ \\[.5mm] \hline\vphantom{$\big|^0$}%
				% GP-Pari: q*eta(q^4)^16/eta(q^2)^4/eta(q^8)^4
				$(\tfrac12,\tfrac16)$ & $\begin{aligned} \vphantom{\big|^0}
				& \eta_4^3\eta_6^6\eta_{18}^2/(\eta_{12}^2\eta_{36}) - 3\eta_2^2\eta_6^6\eta_{36}^3/(\eta_4\eta_{12}^2) \\[-1mm] &\quad + 8\eta_2^3\eta_{12}^6\eta_{36}^2/(\eta_6^2\eta_{18}) - 16\eta_{12}^{12}/\eta_6^4
				\end{aligned}$ & $\phantom072=2^3\cdot3^2$ &
				\href{http://www.lmfdb.org/ModularForm/GL2/Q/holomorphic/72/4/a/b/}{\texttt{72.4.a.b}}
				& $\chi_1$ \\[.5mm] \hline\vphantom{$\big|^0$}%
				% GP-Pari: e2=eta(q^2); e4=eta(q^4); e6=eta(q^6); e12=eta(q^12); e18=eta(q^18); e36=eta(q^36);
				% q*(e4^3*e6^6*e18^2)/(e12^2*e36) - 3*q^5*(e2^2*e6^6*e36^3)/(e4*e12^2) + 8*q^5*(e2^3*e12^6*e36^2)/(e6^2*e18) - 16*q^5*(e12^12)/(e6^4)
				$(\tfrac13,\tfrac13)$ & $\eta_1^3\eta_3^4\eta_9-27\eta_3\eta_9^4\eta_{27}^3$ & $\phantom027=3^3$ &
				\href{http://www.lmfdb.org/ModularForm/GL2/Q/holomorphic/27/4/a/a/}{\texttt{27.4.a.a}}
				& $\chi_1$ \\[.5mm] \hline\vphantom{$\big|^0$}%
				% q*eta(q)^3*eta(q^3)^4*eta(q^9)-27*q^5*eta(q^3)*eta(q^9)^4*eta(q^27)^3
				$(\tfrac13,\tfrac14)$ & $\eta_3^8$ & $\phantom{00}9=3^2$ &
				\href{http://www.lmfdb.org/ModularForm/GL2/Q/holomorphic/9/4/a/a/}{\texttt{9.4.a.a}}
				& $\chi_{6}$ \\[.5mm] \hline\vphantom{$\big|^0$}%
				$(\tfrac13,\tfrac16)$ & $\eta_6^{10}/\eta_{18}^2 - 27\eta_{18}^{10}/\eta_6^2 + 9\eta_6^7\eta_{54}^3/\eta_{18}^2 - 9\eta_2^3\eta_{18}^7/\eta_6^2$ & $108=2^2\cdot3^3$ &
				\href{http://www.lmfdb.org/ModularForm/GL2/Q/holomorphic/108/4/a/a/}{\texttt{108.4.a.a}}
				& $\chi_{3}$ \\[.5mm] \hline\vphantom{$\big|^0$}%
				% GP-Pari: e2=eta(q^2); e6=eta(q^6); e18=eta(q^18); e54=eta(q^54);
				% q*e6^10/e18^2 - 27*q^7*e18^10/e6^2 + 9*q^7*e6^7*e54^3/e18^2 - 9*q^5*e2^3*e18^7/e6^2
				$(\tfrac14,\tfrac14)$ & $\eta_4^{10}/\eta_8^2-8\eta_8^{10}/\eta_4^2$ & $\phantom032=2^5$ &
				\href{http://www.lmfdb.org/ModularForm/GL2/Q/holomorphic/32/4/a/a/}{\texttt{32.4.a.a}}
				& $\chi_1$ \\[.5mm] \hline\vphantom{$\big|^0$}%
				% GP-Pari: q*eta(q^4)^10/eta(q^8)^2-8*q^3*eta(q^8)^10/eta(q^4)^2
				$(\tfrac14,\tfrac16)$ & $\eta_{12}^{32}/(\eta_6^{12}\eta_{24}^{12})+16\eta_6^4\eta_{24}^4$ & $144=2^4\cdot3^2$ &
				\href{http://www.lmfdb.org/ModularForm/GL2/Q/holomorphic/144/4/a/f/}{\texttt{144.4.a.f}}
				& $\chi_{2}$ \\[.5mm] \hline\vphantom{$\big|^0$}%
				% GP-Pari: q*eta(q^12)^32/eta(q^6)^12/eta(q^24)^12+16*q^5*eta(q^6)^4*eta(q^24)^4
				$(\tfrac16,\tfrac16)$ &    & $216=2^3\cdot3^3$ &
				\href{http://www.lmfdb.org/ModularForm/GL2/Q/holomorphic/216/4/a/c/}{\texttt{216.4.a.c}}
				& $\chi_1$  \\[.5mm] \hline\vphantom{$\big|^0$}%
				$(\tfrac15,\tfrac25)$ & $\eta_5^{10}/(\eta_1\eta_{25})+5\eta_1^2\eta_5^4\eta_{25}^2$ & $\phantom025=5^2$ &
				\href{http://www.lmfdb.org/ModularForm/GL2/Q/holomorphic/25/4/a/b/}{\texttt{25.4.a.b}}
				& $\chi_{5}$ \\[.5mm] \hline\vphantom{$\big|^0$}%
				% GP-Pari: q*eta(q^5)^10/eta(q)/eta(q^25)+5*q^3*eta(q)^2*eta(q^5)^4*eta(q^25)^2
				$(\tfrac18,\tfrac38)$ & $\begin{aligned} \vphantom{\big|^0}
				& \eta_1^2\eta_2\eta_4^3\eta_8^3/\eta_{16}
				+2\eta_2^2\eta_4^3\eta_8^2\eta_{16}
				+8\eta_4^2\eta_8^2\eta_{16}^2\eta_{32}^2
				- 24\eta_4^2\eta_{16}^2\eta_{32}^4
				\\[-1mm] &\quad
				-16\eta_2^2\eta_4\eta_8^2\eta_{64}^4/\eta_{16}
				-64\eta_8\eta_{16}^2\eta_{32}^3\eta_{64}^2
				+32\eta_{16}^2\eta_{32}^5\eta_{64}^2/\eta_8
				\\[-1mm] &\quad
				-32\eta_4^4\eta_8\eta_{32}\eta_{64}\eta_{128}^2/\eta_{16}
				-64\eta_8\eta_{16}^3\eta_{32}\eta_{64}\eta_{128}^2
				\\[-1mm] &\quad
				-256\eta_{16}^3\eta_{32}^3\eta_{64}\eta_{128}^2/\eta_8
				+128\eta_4^2\eta_{32}^2\eta_{64}^3\eta_{128}^2/\eta_{16}
				\\[-1mm] &\quad
				-256\eta_4^4\eta_{32}\eta_{128}^4/\eta_8
				-128\eta_{16}^4\eta_{32}\eta_{128}^4/\eta_8
				-512\eta_4^2\eta_{64}^2\eta_{128}^4
				\end{aligned}$ & $128=2^7$ &
				\href{http://www.lmfdb.org/ModularForm/GL2/Q/holomorphic/128/4/a/b/}{\texttt{128.4.a.b}}
				& $\chi_{2}$ \\[.5mm] \hline\vphantom{$\big|^0$}%
				% GP-Pari: q*eta(q)^2*eta(q^2)*eta(q^4)^3*eta(q^8)^3*eta(q^16)^-1 + 2*q^2*eta(q^2)^2*eta(q^4)^3*eta(q^8)^2*eta(q^16) + 8*q^5*eta(q^4)^2*eta(q^8)^2*eta(q^16)^2*eta(q^32)^2 - 24*q^7*eta(q^4)^2*eta(q^16)^2*eta(q^32)^4 - 16*q^11*eta(q^2)^2*eta(q^4)*eta(q^8)^2*eta(q^16)^-1*eta(q^64)^4 - 64*q^11*eta(q^8)*eta(q^16)^2*eta(q^32)^3*eta(q^64)^2 + 32*q^13*eta(q^8)^-1*eta(q^16)^2*eta(q^32)^5*eta(q^64)^2 - 32*q^15*eta(q^4)^4*eta(q^8)*eta(q^16)^-1*eta(q^32)*eta(q^64)*eta(q^128)^2 - 64*q^17*eta(q^8)*eta(q^16)^3*eta(q^32)*eta(q^64)*eta(q^128)^2 - 256*q^19*eta(q^8)^-1*eta(q^16)^3*eta(q^32)^3*eta(q^64)*eta(q^128)^2 + 128*q^21*eta(q^4)^2*eta(q^16)^-1*eta(q^32)^2*eta(q^64)^3*eta(q^128)^2 - 256*q^23*eta(q^4)^4*eta(q^8)^-1*eta(q^32)*eta(q^128)^4 - 128*q^25*eta(q^8)^-1*eta(q^16)^4*eta(q^32)*eta(q^128)^4 - 512*q^27*eta(q^4)^2*eta(q^64)^2*eta(q^128)^4
				$(\tfrac1{10},\tfrac3{10})$ & & $200=2^3\cdot5^2$ &
				\href{http://www.lmfdb.org/ModularForm/GL2/Q/holomorphic/200/4/a/f/}{\texttt{200.4.a.f}}
				& $\chi_1$ \\[.5mm] \hline\vphantom{$\big|^0$}%
				$(\tfrac1{12},\tfrac5{12})$ & & $864=2^5\cdot3^3$ &
				\href{http://www.lmfdb.org/ModularForm/GL2/Q/holomorphic/864/4/a/a/}{\texttt{864.4.a.a}}
				& $\chi_1$\\[.5mm] \hline
			\end{tabular}
		\end{center}
		\label{tab1}
	\end{table}
	
	The supercongruences in Theorem~\ref{thm:main} are of interest not only for their own sake; they also happen to encode remarkable symmetries of underlying varieties.
	In terms of applications, the symmetries can be used to determine the $L$-functions of hypergeometric motives more efficiently.
	
	The paper is organized as follows. In Section~\ref{sec2},
	we briefly go through historical developments of the supercongruences.
	\cy	Section~\ref{ss:Dwork} is for the ordinary case, namely, when the finite sum in \eqref{eq:4F3trunc} does not vanish modulo a prime $p>5$. In this case we show,  in Theorem \ref{thm:3}, that the truncated hypergeometric series and Dwork $p$-adic unit root agree modulo $p^3$.  This is based on Dwork's method and the $p$-adic perturbation method. 
	En route, a hypergeometric machinery is used to design auxiliary identities\,---\,a machinery that naturally complements the $p$-adic perturbation method from~\cite{LR}.
	We believe this approach is new. Then, in Section~\ref{ss:charactersums}, hypergeometric motives  and  finite hypergeometric functions are introduced.
	Section~\ref{ss:CY} discuses the fourteen one-parameter families of
	Calabi--Yau threefolds as well as the associated mirror Calabi--Yau
	threefolds and corresponding rigid Calabi--Yau threefolds.
	This section concludes with the proof of Theorem~\ref{thm:level}.
	Then our proof of Theorem~\ref{thm:main} for all primes $ p>5$ is presented in Section~\ref{ss:charsum} building on the results from the previous sections.
	Importantly, the hypergeometric machinery in Section~\ref{ss:Dwork} also gives us a good control of the $p$-adic error terms here.
	Finally, in Section~\ref{sec:7} we review \cy our two strategies of proving the supercongruences and indicate some future potential development of the methods.  The relation between the Dwork unit roots and varieties $V_\balpha(1)$ listed in Table \ref{table: tab5} is given in Appendix~\ref{ss:A} via 1-dimensional commutative formal group laws (1-CFGL).  In part~\ref{ss:A.2} we link the cohomology groups of singular
	Calabi--Yau threefolds (at the conifold point) with their smooth models via Stienstra's work \cite{Stienstra,Stienstra-Formal}. These relations present interconnections among the $p$-adic unit roots.
	We demonstrate them at length on one particular example $(r_1,r_2)=(\frac 12,\frac12)$ which corresponds to the complete intersection of \emph{four} hypersurfaces in a weighted projective space. The other cases are characterized as intersections of two hypersurfaces or as a single hypersurface;
	they can be computed in a similar manner, and we limit ourselves to listing the logarithms of the corresponding 1-CFGL in Table~\ref{tab6}.   \bk

	We would like to point out that McCarthy previously attempted \cite{McCarthy-extending} to prove supercongruences \eqref{eq:RV-1}, in their full generality,
	by defining a class of $p$-adic hypergeometric functions which he called $G$-functions.

	\section{Review of related background}
	\label{sec2}
	
	\subsection{Supercongruences}
	\label{sec2.1}
	In the notation of Section~\ref{sec1}, let $\overline{\hat{\mathcal V}(1)}$ denote the quintic rigid Calabi--Yau threefold
	and $f=f_{\{\frac 15,\frac 25,\frac 35,\frac 45\}}$ the weight $4$ modular form associated with it.
	One way to see the relation of the $p$-th Fourier coefficient $a_p(f)$ to a truncated hypergeometric sum
	comes from Stienstra's results in \cite{Stienstra}. We know from this work that there is a one-dimensional formal group arising from $H^3(V(1),\hat {G}_{m,V(1)})$ whose logarithm is given by
	\begin{equation}
	\sum_{n\ge 1} \frac{A_n}n \tau^n
	=\sum_{n=1}^\infty\biggl(\sum_{k\ge0}\binom{n-1}{5k}\frac{(5k)!}{k!^5}\,(-5)^{n-1-5k}\biggr)\frac{\tau^n}n;
	\label{Antau}
	\end{equation}
	see \cite[Theorem 1]{Stienstra} for the formula for $A_n$.
	Similar conclusions can be drawn for other thirteen cases using the homogeneous equations listed in Tables~\ref{tab3} and~\ref{tab4};  the recipe is given explicitly in the recent paper \cite{BV} by Beukers and Vlasenko, which is an extension of \cite{Stienstra}. \bk
	By the modularity of $\overline{\mathcal V(1)}$, this formal group is isomorphic to the formal group whose logarithm is defined by
	$$
	\sum_{n=1}^\infty a_n(f)\frac{\tau^n}n.
	$$
	In particular, when $n=p$ is a prime different from $5$,
	$$
	A_p=\sum_{k\ge 0}\binom{p-1}{5k}\frac{(5k)!}{k!^5}\,(-5)^{p-1-5k}\equiv \sum_{k=0}^{p-1}\frac{(5k)!}{k!^5}\,5^{-5k}\mod p.
	$$

	From the standard results in commutative formal group laws, it then follows that for each prime $p>5$,
	\begin{equation*}
	\PFQ{4}{3}{\frac15,\,\frac25,\,\frac35,\,\frac45}{1,\,1,\,1}{1}_{p-1}
	=\sum_{k=0}^{p-1}\frac{(5k)!}{k!^5}\,5^{-5k}
	\equiv a_p(f) \mod p.
	\end{equation*}
	Rodriguez-Villegas later noticed and conjectured in \cite{RV} that this congruence
	(and, similarly, the remaining thirteen ones for rigid hypergeometric Calabi--Yau threefolds) hold true modulo $p^3$;
	these are precisely the supercongruences \eqref{eq:RV-1} in Theorem~\ref{thm:main}.

	The more recent work \cite{RRW} of Roberts and Rodriguez-Villegas brings to life
	refined predictions related to \eqref{eq:RV-1};
	it also indicates a heuristics underlying the supercongruences that explains the exponent~3 of the modulus
	by the Hodge filtration of the corresponding hypergeometric motives.
	One part of the story is Dwork's unit-root theory in \cite{Dwork-padic}, in which the truncated hypergeometric series 
	\begin{equation}
	F_s=F_s(\balpha)
	:=\PFQ{4}{3}{r_1,\,1-r_1,\,r_2,\,1-r_2}{\quad 1,\quad 1,\quad 1}{1}_{p^s-1}
	\quad\text{for}\; s\ge1
	\label{trunc}
	\end{equation}
	plays an important role. Following Dwork, if $F_1\not\equiv 0\mod p$, then there exists $\gamma_p\in \Z_p^\times$ such that  \begin{equation}\label{eq:gp}
	F_s/F_{s-1}\equiv \gamma_p \mod{p^s}
	\end{equation} for $s\ge1$.    The claim $F_{s+1}/F_s\equiv F_1\mod{p^3}$ for $s=1,2$ (hence for all $s\ge0$)\,---\,Theorem~\ref{thm:3} below\,---\,and its proof
	form the contents of Section~\ref{ss:Dwork}.  Later in \cite{Dwork-padic}, Dwork showed how such unit roots can be used to compute the zeta functions of the fibers on the example of one-parameter deformation of Fermat hypersurfaces.  In \cite[Theorem 4.3]{Yu},  Yu generalized Dwork's result to Dwork families of Calabi--Yau manifolds, including the quintic family from the Introduction. For the fiber $\psi=1$ of the family,    this means that if $F_1(\{\frac15,\frac25,\frac35,\frac45\})\not\equiv 0 \mod p$, then $\gamma_p$ defined in \eqref{eq:gp} happens to be a reciprocal root of the zeta function of $V(1)$ which is invertible in $\Z_p$. Using the construction of the quintic mirror, it is known that it is also the reciprocal root of the zeta function of  $\overline{\hat{\mathcal V}_{\balpha}(1)}$ which is invertible in $\Z_p$ (see \cite{CdOR} by Candelas, de la Ossa and Rodriguez-Villegas).
	
	\subsection{$p$-adic perturbation method}
	Among a variety of techniques proving supercongruences for truncated hypergeometric sums, the most relevant one is the so-called $p$-adic perturbation method
	described in \cite{LR} and originated in \cite{CLZ,Long}. It is efficient in dealing with entities that can be represented through Gamma values.
	In plain  language,  the method breaks down the entities into two parts, one in terms of the $p$-adic Gamma functions $\G_p(x)$ that have good local analytic property
	and the other one that collects all $p$-powers. (We will  use this strategy in our later discussion.)
	As a consequence, we can reduce a desired supercongruence to a major term and error terms.
	In the final stage, eliminations of the error terms are often done through known hypergeometric identities, which are perturbed $p$-adically using additional inputs
	like Galois symmetries. This approach was later used in \cite{WIN3b,Swisher}.
	
	\subsection{Hidden symmetries}\label{ss:HS}
	Supercongruences often seem to highlight some hidden symmetries typically appearing as classical hypergeometric identities,
	symmetries that are crucial to efficiency of the $p$-adic perturbation method.
	For example, Ahlgren and Ono \cite{AO00} used a ``hidden symmetry'' identity
	\begin{equation}\label{eq:Fn}
	\sum_{k=1}^n \binom{n+k}{k}^2\binom{n}{k}^2(1+2kH_{n+k}+2kH_{n-k}-4kH_k)=0
	\end{equation}
	to establish the following supercongruence of the Ap\'ery numbers: for any prime $p>2$,
	\begin{equation}\label{eq:cc}
	A\biggl(\frac{p-1}2\biggr)
        :=\PFQ{4}{3}{\frac{1+p}2,\,\frac{1+p}2,\,\frac{1-p}2,\,\frac{1-p}2}{\quad 1,\quad 1,\quad 1}{1}
	\equiv a_p(f_{\{\frac 12,\frac 12,\frac 12,\frac 12\}}) \mod p^2.
	\end{equation}
	Here $f_{\{\frac 12,\frac 12,\frac 12,\frac 12\}}(\tau)=\eta(2\tau)^4\eta(4\tau)^4$ (as in Table~\ref{tab1}) and
	$$
	H_k:=\sum_{j=1}^k\frac 1j
	$$
	denotes the $k$-th partial harmonic sum, with $H_0=0$.
	As shown in \cite{Beukers}, the formal group method leads to the congruence \eqref{eq:cc} modulo $p$ only;
	identity \eqref{eq:Fn} extends it modulo~$p^2$.
	This latter identity \eqref{eq:Fn} was verified by a clever execution of the Wilf--Zeiberger method of creative telescoping,
	though, in fact, it is a rather elementary analysis of the residue sum for the rational function
	\begin{equation}\label{eq:ratfun}
	\frac{\prod_{j=1}^n(t-j)^2}{\prod_{j=0}^n(t+j)^2}
	\end{equation}
	(see \cite{Zudilin} and also \cite{McCarthy-identity}, \cite[Lemma 5.1]{OSZ}).
	Extending the work \cite{AO00}, Kilbourn \cite{Kilbourn} demonstrated that, for primes $p>2$,
	\begin{equation}\label{eq:Kilbourn}
		\PFQ{4}{3}{\frac 12 ,\, \frac 12 ,\, \frac 12 ,\, \frac 12}{1 ,\, 1 ,\, 1}{1}_{p-1}
	\equiv a_p(f_{\{\frac 12,\frac 12,\frac 12,\frac 12\}}) \mod p^3,
	\end{equation}
	a supercongruence originally conjectured by van Hamme in \cite{vH}.
	The later development of the method towards proving some other instances of \eqref{eq:RV-1} was undertaken in \cite{FM, McCarthy-RV}.
	
	Quite remarkably, a somewhat simpler companion identity associated to \eqref{eq:Fn} exists,
	\begin{equation}\label{eq:Fn2}
	\sum_{k=0}^n \binom{n+k}{k}^2\binom{n}{k}^2(H_{n+k}+H_{n-k}-2H_k)=0,
	\end{equation}
	that possesses the same (if not simpler) proof \cite{Zudilin}. With the help of \eqref{eq:Fn2},
	we can deduce the supercongruence \eqref{eq:Kilbourn} for ordinary primes $p$ without using a heavy machinery
	of character sums\,---\,somewhat more straightforward than \cite{Kilbourn, McCarthy-RV}.
	The next section details the related approach and gives a generalization of \eqref{eq:Fn2},
	which on addition of $p$-adic perturbation terms is capable of treating all the cases in Theorem~\ref{thm:main} for ordinary primes.

	\section{Dwork's approach for ordinary primes}\label{ss:Dwork}
	
	\subsection{$p$-adic setup}\label{ss:4.1}
	Dwork \cite{Dwork-padic} laid down a framework for $p$-adic hypergeometric functions, which we discuss here in relation to the objectives of this paper. \cy In particular, we show in Theorem~\ref{thm:3} below that for any ordinary good prime~$p$
	the finite sum \eqref{eq:4F3trunc} agrees with the corresponding Dwork unit root modulo~$p^3$. \bk
	
	We use $\lf x\rf$ for the floor function of $x\in \mathbb R$ and denote $\{x\}:=x-\lf x\rf$ the fractional part.
	For the discussion in this section, we work over the ring of $p$-adic integers $\Z_p$ with $p>5$ being any fixed prime;
	see \cite{Cohen} for more detailed information on $\Z_p$ and the $p$-adic Gamma function $\G_p(x)$.
	Furthermore, for $r\in \Z_p$, let $[r]_0$ denote its first $p$-adic digit.
	
	\begin{Lemma}
		Given an integer $k$, $0\le k<p$, and $r\in \Z_p^\times$, the rising factorial $(r)_k$ is in $\Z_p^\times$ if and only if $k\le[-r]_0$.
	\end{Lemma}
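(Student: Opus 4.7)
The plan is to exploit that $\Z_p$ is a discrete valuation ring, so that a product of elements of $\Z_p$ lies in $\Z_p^\times$ precisely when every factor does. Writing the rising factorial as $(r)_k=\prod_{j=0}^{k-1}(r+j)$, I would reduce the claim to determining for which indices $j\in\{0,1,\dots,k-1\}$ the factor $r+j$ fails to be a $p$-adic unit, i.e.\ satisfies $r+j\equiv 0\pmod p$.

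Next I would translate this condition into a statement about $[-r]_0$. Since $r\in\Z_p^\times$ forces $-r\in\Z_p^\times$, the first $p$-adic digit $[-r]_0$ lies in $\{1,2,\dots,p-1\}$ and is by construction the unique integer in $\{0,1,\dots,p-1\}$ congruent to $-r$ modulo $p$. The condition $r+j\equiv0\pmod p$ then becomes $j\equiv[-r]_0\pmod p$; because the values $j=0,1,\dots,k-1$ with $k<p$ lie in a single complete residue system modulo $p$, this congruence simplifies to the integer equality $j=[-r]_0$. Hence $(r)_k$ is a $p$-adic unit exactly when $[-r]_0$ avoids the set $\{0,1,\dots,k-1\}$, which is equivalent to $k\le[-r]_0$, as claimed.

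I do not anticipate any genuine obstacle: the whole lemma is essentially the observation that among $p$ consecutive terms of the arithmetic progression $r,r+1,r+2,\dots$ in $\Z_p$ there is exactly one that is divisible by $p$, situated at position $[-r]_0$, and one simply has to check whether this position falls strictly before the truncation index $k$. The corner case $k=0$ is immediate since $(r)_0=1$ and $0\le[-r]_0$ always holds, and the argument goes through uniformly for all remaining $k\in\{1,\dots,p-1\}$.
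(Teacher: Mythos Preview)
Your argument is correct and is exactly the unpacking of the paper's one-line proof, which simply says the claim follows from the factorization $(r)_k=r(r+1)\cdots(r+k-1)$ and the definition of $[-r]_0$. There is no essential difference in approach; you have merely spelled out the details.
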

	
	\begin{proof}
		This follows from $(r)_k=r(r+1)\cdots(r+k-1)$ and the definition of $[-r]_0$.
	\end{proof}
	
	The following discussion is based on Dwork's work on $p$-adic cycles \cite{Dwork-padic}.
	We will use Dwork's dash operation\,---\,the map $\Q\cap\Z_p \to \Q\cap\Z_p$ defined by
	$$
	r'= (r+[-r]_0)/p,
	$$
	which despite of its notational appearance has nothing to do with the usual derivative or derivations.
	It is easy to see that $pr'-r=[-r]_0 \in\{0,1,\dots,p-1\}$.
	If we write $r=\ell/d$ with $\gcd(\ell,d)=1$, the $p$-adic integer $r'$ is $\tilde{\ell}/d$ with $\tilde{\ell}\equiv \ell/p \mod d$.
	For each multi-set $\balpha=\{r_1,r_2,r_3,r_4\}=\{r_1,r_2,1-r_1,1-r_2\}$ in Table~\ref{tab1} and a prime $p$ not dividing the common denominator of $r_1,r_2$, the multi-set $\{r_1',r_2',r_3',r_4'\}$ is just a permutation of $\{r_1,r_2,r_3,r_4\}$ (we will cast this property of $\balpha$ as the closedness under the Galois conjugation in Section~\ref{ss:charactersums}).
	In particular, the dash operation preserves any of our fourteen multi-sets $\balpha$ for every prime $p > 5$.
	
	\begin{Proposition}[Dwork]
		\label{prop:Dwork}
		Given a prime $p$ and a multi-set $\balpha=\{r_1,r_2,r_3,r_4\}$ with $r_1,r_2\in(0,1)\cap\Z_p^\times$ such that $\balpha$ is preserved by the dash operation,
		for any integers $s\ge t\ge 1$ we have $F_sF_{t-1}\equiv F_tF_{s-1}\mod p^t$, where $F_s=F_s(\balpha)$ is defined in \eqref{trunc}.
		In particular, if $F_1(\balpha)\not\equiv0\mod p$, then there exists a unique  $\gamma_p=\gamma_p(\balpha)\in \Z_p^\times$ such that for any integer $s\ge 1$,
		$$
		F_s/F_{s-1}\equiv \gamma_p\mod p^s.
		$$
	\end{Proposition}
	
	\begin{proof}
		Here we indicate how to deduce the proposition from the results in \cite{Dwork-padic}.
		Denote $A(n)=\bigl(\prod_{i=1}^4 (r_i)_n\bigr)/n!^4$.
		Since $\balpha$ is closed under the dash operator, by \cite[Corollary 2]{Dwork-padic} we deduce that
		$A(n)/A(\lf n/p\rf)\in \Z_p$ and for all integers $m,s\ge 1$,
		$$
		\frac{A(n)}{A(\lf n/p\rf)}\equiv \frac{A(n+mp^{s+1})}{A(\lf n/p\rf+mp^s)}
		\mod p^{s+1}.
		$$
		This property and \cite[Theorems 2, 3]{Dwork-padic} imply that $B^{(i)}(n)=A(n)$ for all $i=1,2,\dots$,
		and \cite[Equation (3.2$'$)]{Dwork-padic} at $x=1$ becomes equivalent to the claim $F_sF_{t-1}\equiv F_tF_{s-1}\mod p^t$ for integers $s\ge t\ge 1$. This implies the existense of the $p$-adic limit $\gamma_p$ of $F_s/F_{s-1}$ as $s\to\infty$ when $F_1\not\equiv0\pmod p$; in other words, when $p$ is an ordinary prime. The limit $\gamma_p$ is a unit in $\Z_p$, because $\gamma_p\equiv F_1\pmod p$.
	\end{proof}

	\begin{Theorem}\label{thm:3}
		Let $\balpha=\{r_1,r_2,1-r_1,1-r_2\}$ be one of the fourteen multi-sets from Table~\textup{\ref{tab1}} and $p$ a prime  such that $r_1,r_2\in \Z_p^\times$. Then for any integer $s\ge 1$,
		\begin{equation*}
		F_{s+1}(\balpha)\equiv F_s(\balpha)F_1(\balpha)\mod p^3.
		\end{equation*}
	\end{Theorem}
	
	We will require some preparation before proving Theorem \ref{thm:3}. The following reduction of a quotient of rising factorials makes use
	of the relation between the Gamma and $p$-adic Gamma functions: for prime $p>2$ and a positive integer~$n$,
	\begin{equation}
	\label{gammas}
	\G(n)=(-1)^n \G_p(n)\,\biggl\lfloor \frac {n-1}p\biggr\rfloor!\,p^{\lfloor(n-1)/p\rfloor}.
	\end{equation}
	
	\begin{Lemma}
		\label{Dw-lemma}
		Let $k\in \Z_{\ge 0}$, $a=[k]_0$ and $b=(k-a)/p$, that is, $k=a+bp$.  Then for any  $r\in\Z_p^\times$,
		$$
		\frac{(r)_k}{(1)_k}
		=\frac{-\Gamma_p(r+k)}{\G_p(1+k)\,\G_p(r)}\frac{(r')_b}{(1)_b}\cdot \bigl((r'+b)p\bigr)^{\nu(a,[-r]_0)},
		$$
		where
		\begin{equation}
		\label{eq:nu}
		\nu(a,x)=-\biggl\lfloor\frac{x-a}{p-1}\biggr\rfloor
		=\begin{cases} 0 & \text{if }\;  a \le x, \\1 & \text{if} \;  x<a<p.
		\end{cases}
		\end{equation}
	\end{Lemma}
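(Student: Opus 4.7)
The plan is to split the rising factorial $(r)_k=\prod_{j=0}^{k-1}(r+j)$ into its $p$-divisible and $p$-adically unit subproducts, evaluate each piece in closed form, and then divide by $(1)_k=k!$, which is itself the special case $r=1$ of the same computation. The whole statement is, by design, what emerges from this bookkeeping once the identity $r+[-r]_0=pr'$ is used.

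First I would locate the $p$-divisible factors: $p\mid(r+j)$ for $0\le j\le k-1$ precisely when $j\equiv[-r]_0\pmod p$, i.e.\ $j=x+ip$ with $x:=[-r]_0$. Writing $k=a+bp$, the constraint $j\le k-1$ restricts $i$ to $\{0,1,\ldots,b-1+\nu(a,x)\}$: the extra endpoint $i=b$ is admissible exactly when $x<a$, which is precisely the case $\nu(a,x)=1$. Using $r+j=r+x+ip=p(r'+i)$, the $p$-divisible subproduct collapses to $p^{b+\nu}(r')_{b+\nu}$, where $\nu=\nu(a,x)$.

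Next I would package the unit subproduct through the functional equation $\Gamma_p(z+1)/\Gamma_p(z)=-z$ when $|z|_p=1$ and $-1$ otherwise. Telescoping yields
\begin{equation*}
\frac{\Gamma_p(r+k)}{\Gamma_p(r)}=\prod_{j=0}^{k-1}\frac{\Gamma_p(r+j+1)}{\Gamma_p(r+j)}=(-1)^k\prod_{\substack{0\le j<k\\p\,\nmid\,r+j}}(r+j),
\end{equation*}
so the unit subproduct equals $(-1)^k\Gamma_p(r+k)/\Gamma_p(r)$. Combining both parts gives $(r)_k=(-1)^k\,\Gamma_p(r+k)/\Gamma_p(r)\cdot p^{b+\nu}(r')_{b+\nu}$. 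Specialising to $r=1$ (so $x=p-1$, $\nu=0$, $r'=1$) returns $k!=(1)_k=(-1)^{k+1}\Gamma_p(1+k)\,p^b\,b!$, in agreement with \eqref{gammas}.

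Finally, I would form the ratio and clean up: the signs collapse to a single minus in front of the $\Gamma_p$-quotient, the $p$-powers combine to $p^{\nu}$, and the case identity $p^{\nu}(r')_{b+\nu}/b!=((r'+b)p)^{\nu}(r')_b/(1)_b$\,---\,trivial for $\nu=0$ and amounting to $(r')_{b+1}=(r')_b(r'+b)$ for $\nu=1$\,---\,delivers the claimed formula. The one delicate point, and the part most likely to require care to write cleanly, is the combinatorial count of $p$-divisible factors and its unified encoding by $\nu(a,x)$; beyond that, the argument is a routine manipulation of the $p$-adic Gamma functional equation.
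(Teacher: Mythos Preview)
Your argument is correct and cleaner than the paper's in one respect. The paper's proof first assumes $r$ is a positive integer, writes $(r)_k/(1)_k=\Gamma(r+k)/\bigl(\Gamma(1+k)\Gamma(r)\bigr)$, and then substitutes the relation \eqref{gammas} between the classical $\Gamma$ and $\Gamma_p$ to extract the $p$-adic factors; the general case $r\in\Z_p^\times$ is then deduced by continuity of $\Gamma_p$. You instead work directly with the finite product $(r)_k=\prod_{j=0}^{k-1}(r+j)$, separate the $p$-divisible factors by the combinatorics of $j\equiv[-r]_0\pmod p$, and telescope the unit part via the functional equation of $\Gamma_p$ alone. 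This avoids the classical Gamma function entirely and covers all $r\in\Z_p^\times$ at once, so no density argument is needed. The paper's route has the minor advantage of reusing the already-stated formula \eqref{gammas} and thereby keeping the case analysis inside a single floor computation; your route makes the origin of the exponent $\nu(a,[-r]_0)$ and of the factor $(r'+b)p$ more transparent, since both fall out of the explicit enumeration of the $p$-divisible indices $j=x+ip$.
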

	
	\begin{proof}
		Write $r \in \Z_p^\times$ as $r=[r]_0+ph$. Then $[-r]_0=p-[r]_0$ and $r'=h+1$. First, assume that $r$ is a positive integer.
		Then from \eqref{gammas} we deduce that
		$$
		\frac{(r)_k}{(1)_k}
		=\frac{\Gamma(r+k)}{\G(1+k)\,\G(r)}
		=\frac{-\Gamma_p(r+k)}{\G_p(1+k)\,\G_p(r)}\,\frac{\lfloor(r+k-1)/p\rfloor!}{\lfloor k/p\rfloor!\,\lfloor(r-1)/p\rfloor!}
		\,p^{\lfloor(r+k-1)/p\rfloor-\lfloor k/p\rfloor-\lfloor(r-1)/p\rfloor}.
		$$
		Since $r+k =a+[r]_0+p(h+b)$,
		we have
		$$
		\biggl\lfloor \frac {r+k-1}p\biggr\rfloor
		=h+b+
		\begin{cases}
		0& \text{if}\; a\le p-[r]_0=[-r]_0,\\
		1& \text{if}\; [-r]_0< a<p,
		\end{cases}
		$$
		so that for $0<a\le[-r]_0$,
		$$
		\frac{\lfloor(r+k-1)/p\rfloor!}{\lfloor k/p\rfloor!\,\lfloor(r-1)/p\rfloor!}\,p^{\lfloor(r+k-1)/p\rfloor-\lfloor k/p\rfloor-\lfloor(r-1)/p\rfloor}
		=\frac{(h+b)!}{b!h!}=\frac{(h+1)_b}{(1)_b}=\frac{(r')_b}{(1)_b},
		$$
		and for $[-r]_0< a<p$,
		$$
		\frac{\lfloor(r+k-1)/p\rfloor!}{\lfloor k/p\rfloor!\,\lfloor(r-1)/p\rfloor!}\,p^{\lfloor(r+k-1)/p\rfloor-\lfloor k/p\rfloor-\lfloor(r-1)/p\rfloor}
		=\frac{(h+b)!}{b!h!}\,(h+b+1)p=\frac{(r')_b}{(1)_b}\,(r'+b)p.
		$$
		Therefore,
		$$
		\frac{(r)_k}{(1)_k}
		=\frac{-\Gamma_p(r+k)}{\G_p(1+k)\,\G_p(r)}\,\frac{(r')_b}{(1)_b}\,\bigl((r'+b)p\bigr)^{\nu(a,[-r]_0)}.
		$$
		The identity holds for all $r\in \Z_p^\times$ by the continuity of the $p$-adic Gamma function.
	\end{proof}
	
	We complement Lemma~\ref{Dw-lemma} by its particular instance $k=a$ (so that $b=0$):
	\begin{equation}\label{eq:GammaRis}
	\frac{-\Gamma_p(r+a)}{\Gamma_p(1+a)\,\Gamma_p(r)}
	=\frac{(r)_a}{a!}\cdot\frac1{(r'p)^{\nu(a,[-r]_0)}},
	\end{equation}
	which in turn implies the following for $k=a+bp$:
	\begin{align}
	\frac{(r)_{a+bp}}{(1)_{a+bp}}
	&=\frac{-\Gamma_p(r+a)}{\Gamma_p(1+a)\,\Gamma_p(r)}\,\frac{(r')_b}{(1)_b}((r'+b)p)^{\nu(a,[-r]_0)}
	\frac{\G_p((r+a)+bp)\G_p(1+a)}{\G_p(r+a)\G_p((1+a)+bp)}
	\nonumber\\
	&=\frac{(r)_a}{a!}\,\frac{(r')_b}{(1)_b}\biggl(1+\frac b{r'}\biggr)^{\nu(a,[-r]_0)}
	\frac{\G_p((r+a)+bp)\G_p(1+a)}{\G_p(r+a)\G_p((1+a)+bp)}.
	\label{eq:red}
	\end{align}
	This ``key reduction'' formula \eqref{eq:red} is instrumental in separating the $p$-adic terms in a way convenient to our future analysis.
	
	We will use the following local analytic properties of the $p$-adic Gamma function (see for example \cite{LR}).
	For $t,s\in \Z_p$,
	\begin{equation}
	\label{eq:pGamma}
	\G_p(t+sp)=\G_p(t)\biggl(1+sp\,G_1(t)+\frac{(sp)^2}{2}\,G_2(t)\biggr) \mod p^3,
	\end{equation}
	where$$
	G_1(t):=\frac{\d}{\d t}\log\G_p(t)=\frac{\d\G_p(t)/\d t}{\G_p(t)}
	\quad\text{and}\quad
	G_2(t):=\frac{\d^2\G_p(t)/\d t^2}{\G_p(t)}.
	$$
	From definition,
	$$
	\frac{\d}{\d t} G_1(t)=G_2(t)-G_1(t)^2.
	$$
	
	\begin{Lemma}
		\label{gp-poch}
		For any $t\in \Z_p$ and an integer $a\in\{0,1,\dots,p-1\}$, we have
		\begin{align*}
		\frac{\d}{\d t} (t)_a&=(t)_a\biggl(G_1(t+a)-G_1(t)+\frac{\nu(a,[-t]_0)}{t+[-t]_0}\biggr);
		\\
		\frac{\d^2}{\d t^2} (t)_a&=(t)_a\biggl(\biggl(G_1(t+a)-G_1(t)+\frac{\nu(a,[-t]_0)}{t+[-t]_0}\biggr)^2
		\\ &\qquad
		+G_2(t+a)-G_2(t)-G_1(t+a)^2+G_1(t)^2-\frac{\nu(a,[-t]_0)}{(t+[-t]_0)^2}\biggr),
		\end{align*}
		where $\nu(a,x)$ is defined in \eqref{eq:nu}. Notice that $t+[-t]_0=pt'$.
	\end{Lemma}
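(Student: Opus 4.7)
The plan is to reduce the entire statement to the polynomial identity $(t)_a=t(t+1)\cdots(t+a-1)$, whose logarithmic derivative is the elementary sum
\[
L(t):=\frac{\d}{\d t}\log(t)_a=\sum_{j=0}^{a-1}\frac{1}{t+j},
\]
and then to identify $L(t)$ with the expression $G_1(t+a)-G_1(t)+\nu(a,[-t]_0)/(t+[-t]_0)$ that appears in the lemma. Once the first-derivative formula is in hand, the second-derivative formula follows mechanically from the identity $f''/f=L^2+L'$ applied to $f=(t)_a$.

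For the identification with $G_1$, I would invoke the functional equation of the $p$-adic Gamma function: $\Gamma_p(x+1)=-x\,\Gamma_p(x)$ when $|x|_p=1$ and $\Gamma_p(x+1)=-\Gamma_p(x)$ otherwise. Differentiating logarithmically gives $G_1(x+1)-G_1(x)=1/x$ on the units and $G_1(x+1)-G_1(x)=0$ on $p\Z_p$. Telescoping from $x=t$ up to $x=t+a-1$ yields
\[
G_1(t+a)-G_1(t)=\sum_{\substack{0\le j<a\\ t+j\in\Z_p^\times}}\frac{1}{t+j}.
\]
Now, among the indices $j=0,1,\dots,a-1$, there is at most one $j$ for which $t+j\notin\Z_p^\times$, namely $j=[-t]_0$, and this value lies in the range precisely when $[-t]_0<a$, i.e.\ when $\nu(a,[-t]_0)=1$. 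In that single case the missing term is exactly $1/(t+[-t]_0)$, and adding it back to $G_1(t+a)-G_1(t)$ recovers $L(t)$. This gives the first formula after multiplication by $(t)_a$.

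For the second derivative, with $L=f'/f$ one has $f''=f(L^2+L')$, so I only need to compute $L'(t)$. The key observation is that $[-t]_0$, and hence $\nu(a,[-t]_0)$, are locally constant in $t$ (they are constant on each coset of $p\Z_p$ in $\Z_p$), so on such a neighborhood
\[
L'(t)=G_1'(t+a)-G_1'(t)-\frac{\nu(a,[-t]_0)}{(t+[-t]_0)^2}.
\]
Substituting $G_1'(t)=G_2(t)-G_1(t)^2$ from the paragraph preceding the lemma and adding $L^2$ produces the stated expression verbatim.

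I do not anticipate a genuine obstacle: the only delicate points are (i) carrying out the telescoping on the correct open subset of $\Z_p$ on which $[-t]_0$ is constant, so that the correction term $\nu(a,[-t]_0)/(t+[-t]_0)$ is analytic in $t$, and (ii) verifying the corner cases $a=0$ and $[-t]_0\ge a$, in which $\nu=0$ and all of $t,t+1,\dots,t+a-1$ are units, so the correction vanishes and the formulas degenerate correctly.
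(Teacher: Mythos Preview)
Your proposal is correct and follows essentially the same route as the paper. The paper first records the identity
\[
(t)_a=(-1)^a\frac{\Gamma_p(t+a)}{\Gamma_p(t)}\bigl(t+[-t]_0\bigr)^{\nu(a,[-t]_0)}
\]
(obtained by telescoping the functional equation of $\Gamma_p$) and then logarithmically differentiates it; you instead differentiate the product $(t)_a$ first and perform the same telescoping at the level of $G_1$. The two arguments are equivalent, and your explicit remark that $[-t]_0$ and $\nu(a,[-t]_0)$ are locally constant on cosets of $p\Z_p$ makes rigorous the step the paper leaves implicit.
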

	
	\begin{proof}
		By the functional equation of the $p$-adic Gamma function, for $0\le a<p$ we have
		\begin{equation}
		(t)_a=t(t+1)\cdots(t+a-1)=(-1)^a\frac{\G_p(t+a)}{\G_p(t)}(t+[-t]_0)^{\nu(a,[-t]_0)}.
		\label{(t)_k}
		\end{equation}
		Logarithmically differentiating the equality we derive the required claims.
	\end{proof}
	
	Let $\balpha=\{r_1,r_2,r_3,r_4\}=\{r_1,r_2,1-r_1,1-r_2\}$ be one of the fourteen multi-sets from Table~\textup{\ref{tab1}} and $p$ a prime such that $r_1,r_2\in \Z_p^\times$.
	The dash operation preserves the multi-set; from now on we will numerate the entries in the multi-set $\{r_1,r_2,r_3,r_4\}$ in such a way that
	\begin{equation}\label{eq:order}r_1'\le r_2'\le r_3'\le r_4'.\end{equation}
	This inequality, the structure of the entries in $\{r_1,r_2,r_3,r_4\}=\{r_1',r_2',r_3',r_4'\}$ and the trivial property $(1-r)'=1-r'$ for rational $r\in(0,1)\cap\Z_p^\times$ result in
	$$
	r_1+r_4=r_2+r_3=1 \quad\text{and}\quad r_1'+r_4'=r_2'+r_3'=1.
	$$
	Furthermore, denote 
	$$
	a_j:=[-r_j]_0=pr_j'-r_j, \quad \text{for} \; j=1,2,3,4.
	$$
	From the ordering chosen in \eqref{eq:order}, if $i<j$ then  $a_i-a_j=p(r_i'-r_j')-(r_i-r_j)\le-(r_i-r_j)<1$, hence $a_i\le a_j$ as they are both integers. Putting together, 
	$$
	a_1\le a_2\le a_3\le a_4 \quad\text{and}\quad a_1+a_4=a_2+a_3=p-1.
	$$
	The extra factors appearing in \eqref{eq:red} are collected in the expression
	\begin{equation}\label{eq:Lambda}
	\Lambda_{\balpha}(a+bp):=\prod_{j=1}^4\biggl(1+\frac b{r_j'}\biggr)^{\nu(a,[-r_j]_0)}=\begin{cases}
	1 &\text{if}\; 0\le a\le a_1, \\
	(1+b/r_1') &\text{if}\; a_1<a\le a_2, \\
	(1+b/r_1')(1+b/r_2') &\text{if}\; a_2<a\le a_3 \\
	\end{cases}
	\end{equation}
	(we omit the other cases in view of their irrelevance)
	and, for $0\le a<p$, the $p$-adic order of the Pochhammer quotient
	\begin{equation}
	\frac{\prod_{j=1}^4(r_j)_a}{a!^4}
	\label{prodr}
	\end{equation}
	is equal to $s\in\{0,1,2,3,4\}$ if and only if $a_s<a\le a_{s+1}$, where we additionally set $a_0=-1$ and $a_5=p-1$.
	
	It follows from \eqref{eq:pGamma} that
	\begin{equation}\label{eq:J-role}
	\frac{\prod_{j=1}^4\G_p((r_j+a)+bp)}{\G_p((1+a)+bp)^4}
	\equiv\frac{\prod_{j=1}^4\G_p(r_j+a)}{\G_p(1+a)^4}\bigl(1+J_1(a)\,bp+J_2(a)\,(bp)^2\bigr) \mod p^3,
	\end{equation}
	where the coefficients $J_1(a)$ and $J_2(a)$ are given by
	\begin{equation}\label{J_12}
	\begin{aligned}
	J_1(a)=J_1(a,\balpha)
	&:=\sum_{j=1}^4\bigl(G_1(r_j+a)-G_1(1+a)\bigr),
	\\
	J_2(a)=J_2(a,\balpha)
	&:=10G_1(1+a)^2-4G_1(1+a)\sum_{j=1}^4G_1(r_j+a)
	\\ &\quad
	+\sum_{1\le j<\ell\le4}G_1(r_j+a)G_1(r_\ell+a)+\frac12\sum_{j=1}^4\bigl(G_2(r_j+a)-G_2(1+a)\bigr).
	\end{aligned}
	\end{equation}
	By the key reduction formula \eqref{eq:red} and from \eqref{eq:J-role},
	\begin{align}
	F_{s+1}(\balpha)
	&=\sum_{a=0}^{p-1}\sum_{b=0}^{p^s-1}\frac{\prod_{j=1}^4(r_j)_{a+bp}}{(1)_{a+bp}^4}
	\nonumber\\
	&\equiv\sum_{b=0}^{p^s-1}\frac{\prod_{j=1}^4(r_j')_b}{b!^4}
	\sum_{a=0}^{p-1}\frac{\prod_{j=1}^4(r_j)_a}{a!^4}
	\nonumber\\ &\qquad\times
	\Lambda_\balpha(a+bp)\bigl(1+J_1(a)\cdot bp+J_2(a)\cdot(bp)^2\bigr)
	\mod{p^3}.
	\label{FSA}
	\end{align}
	
	\subsection{Dirichlet characters}\label{ss:char}
	
	For $r\in \Z_p^\times$, the reflection formula of the $p$-adic Gamma function reads $\G_p(r)\,\G_p(1-\nobreak r)=(-1)^{[r]_0}$.
	In our situation $[r_j]_0=p-[-r_j]_0\equiv a_j\pmod 2$ for $j=1,2,3,4$ and $p>5$,
	therefore
	\begin{equation}
	\chi_{\balpha}(p):=\prod_{j=1}^4\Gamma_p(r_j)
	=\Gamma_p(r_1)\,\Gamma_p(1-r_1)\,\Gamma_p(r_2)\Gamma_p(1-r_2)
	=(-1)^{a_1+a_2}.
	\label{refl}
	\end{equation}
	We will use the result in our further derivations.
	
	\begin{Lemma}\label{lem:char}
		For each multi-set $\balpha=\{r_1,1-r_1,r_2,1-r_2\}$ as in Theorem~\textup{\ref{thm:main}},
		$\chi_{\balpha}$ defines the quadratic Dirichlet character as in the last column of Table~\textup{\ref{tab1}}.
	\end{Lemma}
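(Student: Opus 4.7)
The plan is to evaluate the sign $\chi_{\balpha}(p)=(-1)^{a_1+a_2}$ from formula \eqref{refl} case by case for the fourteen multi-sets $\balpha$ in Table~\ref{tab1}. A preliminary simplification stems from the identity $[-r]_0+[-(1-r)]_0=p-1$, which follows because both values lie in $\{1,\dots,p-1\}$ and sum to a multiple of $p$. Since $p-1$ is even, the two members of each complementary pair $\{r,1-r\}$ contribute the same parity to the exponent, so $(-1)^{a_1+a_2}$ equals $(-1)^{[-r_1]_0+[-r_2]_0}$ independently of how the ordering $a_1\le\dots\le a_4$ distributes the four residues; here $r_1,r_2$ denote the two base parameters of the multi-set.

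I would then compute, for each fraction $r=a/d$ appearing in the table, the sign $(-1)^{[-r]_0}$ via the elementary formula $[-r]_0=(kp-a)/d$, where $k\in\{1,\dots,d-1\}$ is the representative of $ap^{-1}\pmod d$; this identifies $(-1)^{[-r]_0}$ as an explicit function of $p$ modulo $2d$. For the small denominators $d\in\{2,3,4,6\}$ the sign turns out to be a Kronecker symbol in its own right: $(-1)^{[-1/2]_0}=\chi_{-1}(p)$, $(-1)^{[-1/3]_0}=\chi_{-3}(p)$, $(-1)^{[-1/4]_0}=\chi_{-2}(p)$ and $(-1)^{[-1/6]_0}=\chi_{-1}(p)$, using the natural extension of the notation $\chi_d(p)=\bigl(\tfrac{d}{p}\bigr)$ to negative $d$. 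Multiplying the appropriate pair of symbols in each of the ten cases $r_1,r_2\in\{1/2,1/3,1/4,1/6\}$ and invoking quadratic reciprocity then recovers the listed entry $\chi_{\balpha}$ of Table~\ref{tab1}.

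For the four exceptional pairs $(r_1,r_2)\in\{(1/5,2/5),(1/8,3/8),(1/10,3/10),(1/12,5/12)\}$ the individual signs $(-1)^{[-r_i]_0}$ are generally not Dirichlet characters, but their pairwise products are. A direct tabulation, splitting $p$ into residue classes modulo $10$, $16$, $20$, $24$ respectively, confirms that the products $(-1)^{[-r_1]_0+[-r_2]_0}$ evaluate to $\chi_5(p)$, $\chi_2(p)$, $\chi_1(p)$, $\chi_1(p)$, matching Table~\ref{tab1}. The main obstacle is purely one of bookkeeping: for conductors $20$ and $24$ several residue subcases must be handled individually, and one should verify multiplicativity of the resulting function on $(\mathbb Z/N\mathbb Z)^\times$ to confirm the character identification. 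No conceptual input beyond the closed formula for $[-r]_0$ and quadratic reciprocity is required.
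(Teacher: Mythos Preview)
Your proposal is correct and follows essentially the same route as the paper's proof: compute the building-block sign $\Gamma_p(r)\Gamma_p(1-r)=(-1)^{[r]_0}$ (equivalently, your $(-1)^{[-r]_0}$, which differs by an irrelevant global sign that cancels in the product) for $r\in\{\tfrac12,\tfrac13,\tfrac14,\tfrac16\}$, multiply the two relevant factors for the ten mixed cases, and handle the four exceptional pairs by direct tabulation. Your explicit justification that the ordering of $a_1,\dots,a_4$ does not affect the parity of $a_1+a_2$ is a useful detail that the paper leaves implicit, and your version is otherwise a more fleshed-out rendition of the paper's two-sentence sketch.
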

	
	\begin{proof}
		It is  straightforward that $\G_p(r)\,\G_p(1-r)$ agrees with the Legendre symbol
		$\bigl(\frac{-1}p\bigr)$, $\bigl(\frac{-3}p\bigr)$, $\bigl(\frac{-2}p\bigr)$ and $\bigl(\frac{-1}p\bigr)$ when $r=\frac12,\frac13,\frac14,\frac16$, respectively.
		Furthermore, a direct verification implies that the right-hand side of \eqref{refl} is~1 when $(r_1,r_2)=(\frac1{10},\frac3{10})$ or $(\frac1{12},\frac5{12})$;
		$\bigl(\frac5p\bigr)$ when $(r_1,r_2)=(\frac15,\frac25)$; and
		$\bigl(\frac2p\bigr)$ when $(r_1,r_2)=(\frac18,\frac38)$.
	\end{proof}

	\subsection{Proof of Theorem \ref{thm:3}}\label{ss:4.2}
	
	In view of \eqref{FSA}, Theorem \ref{thm:3} is implied by the following lemma.
	
	\begin{Lemma}
		\label{modp^3-reduction}
		For any $b\in\Z_{\ge0}$, the congruence
		\begin{equation*}
		\sum_{a=0}^{p-1}\frac{\prod_{j=1}^4(r_j)_a}{a!^4}
		\bigl( \Lambda(a+bp)\bigl(1+J_1(a)\cdot bp+J_2(a)\cdot (bp)^2\bigr)-1 \bigr) \equiv 0\mod{p^3}
		\end{equation*}
		holds.
	\end{Lemma}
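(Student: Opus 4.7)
The plan is to split the sum over $a\in\{0,1,\dots,p-1\}$ according to the $p$-adic valuation of the Pochhammer quotient \eqref{prodr}. By the valuation stratification noted just after \eqref{prodr}, this valuation is $s$ exactly on $a_s<a\le a_{s+1}$, so on the last interval $(a_3,p-1]$ the summand is already divisible by $p^3$ and may be discarded. On each of the three remaining intervals, $\Lambda_\balpha(a+bp)$ is given by the explicit piecewise formula in \eqref{eq:Lambda}, so the bracket
\[
\Lambda_\balpha(a+bp)\bigl(1+J_1(a)\,bp+J_2(a)\,(bp)^2\bigr)-1
\]
can be expanded in powers of $b$, keeping only the terms whose combined $p$-adic weight with the Pochhammer prefactor is strictly below~$3$.

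The second step is to recognize each of the resulting coefficient sums as a \emph{parametric logarithmic derivative} of the truncated hypergeometric series. Indeed, Lemma~\ref{gp-poch} together with the definitions \eqref{J_12} of $J_1(a),J_2(a)$ says that the combinations of $G_1(r_j+a)$, $G_2(r_j+a)$, $G_1(1+a)$, $G_2(1+a)$, and $1/r_j'$ that arise here are precisely the first and second derivatives at $x=0$ of $\prod_{j=1}^4(r_j+x)_a/(1+x)_a^4$ summed over~$a$. The factor $1+b/r_j'$ appearing in $\Lambda_\balpha$ matches term-by-term the extra $1/(t+[-t]_0)=1/(pt')$ singularity produced in Lemma~\ref{gp-poch} when $a$ crosses the threshold $a_j$; in other words, $\Lambda_\balpha$ is exactly the $p$-adic bookkeeping needed so that these parametric derivatives make sense uniformly across all strata.

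After this recasting, Lemma~\ref{modp^3-reduction} reduces to two hypergeometric identities: the vanishing of a parametric first logarithmic derivative of the truncated $\balpha$-series modulo $p^2$, and an analogous second-order statement modulo $p^3$, each with the correct $\Lambda$-corrections on the shifted intervals $(a_1,a_2]$ and $(a_2,a_3]$. For the rigid Calabi--Yau parameters $\balpha=\{r_1,1-r_1,r_2,1-r_2\}$ these are the natural four-parameter generalizations of the Ahlgren--Ono companions \eqref{eq:Fn} and \eqref{eq:Fn2}. I would prove them by creative telescoping applied to the one-variable rational function
\[
R(t)=\prod_{j=1}^4\frac{(r_j-t)_a}{(1-t)_a},
\]
the obvious extension of \eqref{eq:ratfun} to general $\balpha$, whose residue sum at the nonnegative integers reproduces the target sum and whose Laurent expansions at $t=0$ generate exactly the required $G_1$- and $G_2$-weighted identities. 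The main obstacle will be the second-order identity: it couples the genuine $J_2$-piece with the cross term $(b/r_j')\,J_1(a)\,bp$ and with the quadratic part $(b/r_1')(b/r_2')$ of $\Lambda$, and I expect the final cancellation to be forced by the multi-set symmetry $\{r_1',r_2',r_3',r_4'\}=\{r_1,r_2,r_3,r_4\}$ observed in Section~\ref{ss:4.1}, with the sign keeping track done by the reflection identity~\eqref{refl}.
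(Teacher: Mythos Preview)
Your overall architecture is right and matches the paper: reduce modulo $p^3$ to a quadratic polynomial $C_0+C_1b+C_2b^2$ in $b$ (with $C_0=0$ trivially), then kill $C_1$ and $C_2$ separately by a residue-sum argument on a well-chosen rational function generalizing~\eqref{eq:ratfun}. The ``parametric logarithmic derivative'' interpretation you offer is a valid way to \emph{read} what $J_1$ and $J_2$ are, and the sign bookkeeping via~\eqref{refl} is indeed what the paper uses.

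Where your plan has a genuine gap is in the choice of rational function. Your proposed $R(t)=\prod_{j=1}^4(r_j-t)_a/(1-t)_a$ is not well-defined for the purpose: $a$ is your summation index, so it cannot simultaneously be a fixed Pochhammer length, and in any case a single function will not handle both $C_1$ and $C_2$. The paper uses \emph{two different} rational functions. For $C_1$ it takes
\[
R(t)=\frac{\prod_{j=1}^4\prod_{i=1}^{a_j}(t-i+pr_j')}{\prod_{i=0}^{p-1}(t+i)^2},
\]
whose numerator has degree $a_1+a_2+a_3+a_4=2(p-1)$, exactly two less than the denominator, so that $\Res_{t=\infty}R(t)=0$ gives $\sum_k B_k=0$. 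The shifts $+pr_j'$ in the numerator are the crucial trick you are missing: they are the $p$-adic perturbation that makes the residues $B_k$ line up with the $J_1$-weighted Pochhammer quotients and simultaneously produce the $1/r_1'$, $1/r_2'$ corrections on the intervals $(a_1,a_2]$ and $(a_2,a_3]$, all in one stroke and already accurate mod~$p^2$. Without this perturbation you would only get the identity mod~$p$, which is not enough for $C_1\equiv0\pmod{p^3}$. For $C_2$ the paper uses a structurally different function,
\[
\wt R(t)=\frac{\prod_{i=1}^{a_1}(t-i)}{\prod_{j=1}^3\prod_{i=0}^{a_j}(t+i)},
\]
which has poles of orders $3,2,1$ on the three strata $[0,a_1]$, $(a_1,a_2]$, $(a_2,a_3]$; it is precisely this graded pole structure that generates the $J_2$, $J_1$, and constant pieces on the respective intervals. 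Your single-function, creative-telescoping suggestion does not anticipate either the perturbation or the need for a second function with a different pole profile, and those are the two creative steps here rather than routine detail.
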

	
	\begin{proof}
		From the $p$-adic evaluation of \eqref{prodr} and the definition of $\Lambda(a+bp)$, we conclude that
		the left-hand side modulo $p^3$ is a quadratic polynomial $C_0+C_1b+C_2b^2$ in $b$, with the constant term $C_0=0$, and
		\begin{align*}
		C_1&=p\sum_{a=0}^{a_1}\frac{\prod_{j=1}^4(r_j)_a}{a!^4}J_1(a)
		+\sum_{a=a_1+1}^{a_2}\frac{\prod_{j=1}^4(r_j)_a}{a!^4}
		\biggl(\frac1{r_1'}+pJ_1(a)\biggl)
		\\ &\qquad +\biggl(\frac1{r_1'}+\frac1{r_2'}\biggr)\sum_{a=a_2+1}^{a_3}\frac{\prod_{j=1}^4(r_j)_a}{a!^4};
		\displaybreak[2]\\
		C_2&=p^2\sum_{a=0}^{a_1}\frac{\prod_{j=1}^4(r_j)_a}{a!^4}J_2(a)
		+\frac{p}{r_1'}\sum_{a=a_1+1}^{a_2}\frac{\prod_{j=1}^4(r_j)_a}{a!^4}J_1(a)
		\\ &\qquad
		+\frac1{r_1'r_2'}\sum_{a=a_2+1}^{a_3}\frac{\prod_{j=1}^4(r_j)_a}{a!^4},
		\end{align*}
		where the terms, which are zero modulo $p^3$ for trivial reasons, are discarded.
		Our goal is to demonstrate that $C_1\equiv0\mod{p^3}$ and $C_2\equiv0\mod{p^3}$.
		
		Introduce the rational function
		$$
		R(t)=\frac{\prod_{j=1}^4\prod_{i=1}^{a_j}(t-i+pr_j')}{\prod_{i=0}^{p-1}(t+i)^2}.
		$$
		This function is a generalization of the one in \eqref{eq:ratfun}, with the correction terms $pr_j'$ added to make our argument below more efficient.
		The degree of its numerator $a_1+a_2+a_3+a_4=(a_1+a_4)+(a_2+a_3)=(p-1)+(p-1)=2(p-1)$ is by 2 less than the degree $2p$ of its denominator;
		hence it can be represented as the sum of partial fractions,
		$$
		R(t)=\sum_{k=0}^{p-1}\biggl(\frac{A_k}{(t+k)^2}+\frac{B_k}{t+k}\biggr),
		$$
		for which the identity
		\begin{equation}
		\label{sum-res}
		\sum_{k=0}^{p-1}B_k=\sum_{k=0}^{p-1}\Res_{t=-k}R(t)=-\Res_{t=\infty}R(t)=0
		\end{equation}
		is implied by the residue sum theorem.
		The coefficients in the partial-fraction decomposition can be computed explicitly:
		\begin{align*}
		A_k&=R(t)(t+k)^2\big|_{t=-k}
		=\frac{\prod_{j=1}^4\prod_{i=1}^{a_j}(k+i-pr_j')}{k!^2(p-1-k)!^2}
		=\frac{\prod_{j=1}^4(k+1-pr_j')_{a_j}}{(1)_k^2(1)_{p-1-k}^2},
		\\
		B_k&=A_k\biggl(-\sum_{j=1}^4\sum_{i=1}^{a_j}\frac1{k+i-pr_j'}+2H_k-2H_{p-1-k}\biggr),
		\end{align*}
		where as before $H_k$ stands for the $k$-th partial harmonic sum.
		Using equation~\eqref{(t)_k} and the similar transformation
		\begin{align*}
		(k+1-p\lambda)_a
		&=(-1)^a\frac{\Gamma_p(k+1+a-p\lambda)}{\Gamma_p(k+1-p\lambda)}\,(k+1-p\lambda+[-(k+1-p\lambda)]_0)^{\nu(a,[-k-1]_0)}
		\\
		&=(-1)^a\frac{\Gamma_p(k+1+a-p\lambda)}{\Gamma_p(k+1-p\lambda)}\,(p(1-\lambda))^{\nu(a,[-k-1]_0)}
		\end{align*}
		for $0\le k\le p-1$ and $0< a<p$, as well as noticing that
		$$
		\nu(a_j,[-k-1]_0)=\nu(k+1,[-a_j]_0)=\nu(k+1,[r_j]_0)
		$$
		for $j=1,2,3,4$, we find out that
		\begin{align*}
		A_k&=\frac{\prod_{j=1}^4\Gamma_p(k+1+a_j-pr_j')\,(p(1-r_j'))^{\nu(a_j,[-k-1]_0)}}{\Gamma_p(k+1)^2\Gamma_p(p-k)^2\prod_{j=1}^4\Gamma_p(k+1-pr_j')}
		\\
		&=\frac{\Gamma_p(k+1-p)^2\prod_{j=1}^4\Gamma_p(k+1-r_j)\,(p(1-r_j'))^{\nu(k+1,[r_j]_0)}}{\Gamma_p(k+1)^2\prod_{j=1}^4\Gamma_p(k+1-pr_j')}
		\displaybreak[2]\\
		&=\frac{\prod_{j=1}^4(-r_j)_{k+1}}{k!^4}\,\frac{\Gamma_p(k+1)^2\Gamma_p(k+1-p)^2\prod_{j=1}^4\Gamma_p(-r_j)}{\prod_{j=1}^4\Gamma_p(k+1-pr_j')}
		\displaybreak[2]\\
		&=\frac{r_1r_2r_3r_4\prod_{j=1}^4(1-r_j)_k}{k!^4}\,\frac{\Gamma_p(k+1)^2\Gamma_p(k+1-p)^2\prod_{j=1}^4\Gamma_p(1-r_j)}{r_1r_2r_3r_4\prod_{j=1}^4\Gamma_p(k+1-pr_j')}
		\displaybreak[2]\\
		&=\frac{\prod_{j=1}^4(r_j)_k}{k!^4}\,\prod_{j=1}^4\Gamma_p(r_j)\,(1+O(p^2))
		\\
		&=(-1)^{a_1+a_2}\frac{\prod_{j=1}^4(r_j)_k}{k!^4}\,(1+O(p^2)).
		\end{align*}
		Furthermore,
		\begin{align*}
		&
		-\sum_{j=1}^4\sum_{i=1}^{a_j}\frac1{k+i-pr_j'}+2H_k-2H_{p-1-k}
		\\ &\quad
		=-\sum_{j=1}^4\biggl(G_1(k+1+a_j-pr_j')-G_1(k+1-pr_j')+\frac{\nu(a_j,[-k-1]_0)}{p(1-r_j')}\biggr)
		+2H_k-2H_{p-1-k}
		\displaybreak[2]\\ &\quad
		=-\biggl(\sum_{j=1}^4G_1(k+1-r_j)-4G_1(k+1)\biggr)
		-\sum_{j=1}^4\frac{\nu(k+1,[-a_j]_0)}{p(1-r_j')}
		\\ &\quad\qquad
		+\sum_{j=1}^4G_1(k+1-pr_j')-4G_1(1)-2H_k-2H_{p-1-k}
		\displaybreak[2]\\ &\quad
		\equiv-\biggl(\sum_{j=1}^4G_1(k+1-r_j)-4G_1(k+1)\biggr)
		-\sum_{j=1}^4\frac{\nu(k+1,[-a_j]_0)}{p(1-r_j')}\mod{p^2}
		\\ &\quad
		=-\biggl(J_1(k)
		+\sum_{j=1}^4\frac{\nu(k+1,[-a_j]_0)}{p(1-r_j')}\biggr),
		\end{align*}
		so that
		$$
		B_k\equiv(-1)^{a_1+a_2+1}\frac{\prod_{j=1}^4(r_j)_k}{k!^4}J_1(k)\mod{p^2}
		$$
		for $0\le k\le a_1$;
		$$
		B_k\equiv(-1)^{a_1+a_2+1}\frac{\prod_{j=1}^4(r_j)_k}{k!^4}\biggl(J_1(k)+\frac1{p(1-r_4')}\biggr)\mod{p^2}
		$$
		for $a_1<k\le a_2$;
		\begin{align*}
		B_k&\equiv(-1)^{a_1+a_2+1}\frac{\prod_{j=1}^4(r_j)_k}{k!^4}\biggl(J_1(k)
		+\frac1{p(1-r_4')}+\frac1{p(1-r_3')}\biggr)\mod{p^2}
		\end{align*}
		for $a_2<k\le a_3$; and $B_k\equiv0\mod{p^2}$ for $k>a_3$. Since $1-r_4'=r_1'$ and $1-r_3'=r_2'$, we obtain
		\begin{align*}
		(-1)^{a_1+a_2+1}\sum_{k=0}^{p-1}B_k
		&\equiv\sum_{k=0}^{a_3}\frac{\prod_{j=1}^4(r_j)_k}{k!^4}J_1(k)
		\\ &\qquad
		+\frac1{pr_1'}\sum_{k=a_1+1}^{a_3}\frac{\prod_{j=1}^4(r_j)_k}{k!^4}
		+\frac1{pr_2'}\sum_{k=a_2+1}^{a_3}\frac{\prod_{j=1}^4(r_j)_k}{k!^4}
		\mod{p^2}.
		\end{align*}
		By comparing this with \eqref{sum-res} and the formula defining $C_1$ we conclude that $C_1\equiv0\mod{p^3}$.
		
		We next show that $C_2\equiv 0\mod p^3$ using the different rational function
		$$
		\wt R(t)
		=\frac{\prod_{i=1}^{a_1}(t-i)}{\prod_{j=1}^3\prod_{i=0}^{a_j}(t+i)}
		=\sum_{k=0}^{a_1}\frac{\wt A_k}{(t+k)^3}+\sum_{k=0}^{a_2}\frac{\wt B_k}{(t+k)^2}+\sum_{k=0}^{a_3}\frac{\wt D_k}{t+k}
		$$
		and the related residue-sum identity
		\begin{equation}
		\label{sum-res1}
		\sum_{k=0}^{a_3}\wt D_k=\sum_{k=0}^{a_3}\Res_{t=-k}\wt R(t)=-\Res_{t=\infty}\wt R(t)=0
		\end{equation}
		for it. By construction, $\wt R(t)$ only has poles of order $j\in\{1,2,3\}$ at the points $t=-k$ with $a_{j-1}<k\le a_j$ (recall the additional setting $a_0=-1$).
		With the argument used in the proof of Lemma \ref{gp-poch} and reflection formula for the $p$-adic Gamma function we record
		\begin{align*}
		\prod_{i=1}^{a_1}(t-i)\bigg|_{t=-k}
		&=(-1)^{k+1}\frac{\Gamma_p(k+1+a_1)}{k!}=\frac{\Gamma_p(k+1-r_1)}{\Gamma_p(k+1)}\,(1+O(p))
		\\
		&=\phantom-\frac{\Gamma_p(k+r_4)}{\Gamma_p(k+1)}\,(1+O(p))
		\quad\text{for}\; 0\le k\le a_3
		\\ \intertext{and}
		\frac{t+k}{\prod_{i=0}^{a_j}(t+i)}\bigg|_{t=-k}
		&=\frac{(-1)^k}{k!\,(a_j-k)!}=\frac{(-1)^{a_j-k}}{\Gamma_p(k+1)\,\Gamma_p(a_j-k+1)}=-\frac{\Gamma_p(k-a_j)}{\Gamma_p(k+1)}
		\\
		&=-\frac{\Gamma_p(k+r_j)}{\Gamma_p(k+1)}\,(1+O(p))
		\quad\text{for}\; 0\le k\le a_j;
		\displaybreak[2]\\
		\frac1{\prod_{i=0}^{a_j}(t+i)}\bigg|_{t=-k}
		&=\frac1{(-k)_{a_j+1}}=\frac{(-1)^{a_j+1}\Gamma_p(-k)}{\Gamma_p(a_j-k+1)}=\frac{\Gamma_p(k-a_j)}{\Gamma_p(k+1)}
		\\
		&=\phantom-\frac{\Gamma_p(k+r_j)}{\Gamma_p(k+1)}\,(1+O(p))
		\quad\text{for}\; a_j<k\le a_3,
		\end{align*}
		where $j=1,2,3$. Similar formulas but involving the functions $G_1$ and $G_2$ are valid for the $t$-derivatives of the left-hand sides and afterwards
		substitution $t=-k$, because all the terms in these formulas belong to $\mathbb Z_p^\times$: for any $s=0,1,2,\dots$,
		\begin{align*}
		\frac1{s!}\,\frac{\d^s}{\d t^s}\biggl(\prod_{i=1}^{a_1}(t-i)\biggr)\bigg|_{t=-k}
		&\equiv\phantom-(-1)^s\frac1{s!}\,\frac{\d^s}{\d t^s}\biggl(\frac{\Gamma_p(t+r_4)}{\Gamma_p(t+1)}\biggr)\bigg|_{t=k}\mod p
		\quad\text{for}\; 0\le k\le a_3;
		\displaybreak[2]\\
		\frac1{s!}\,\frac{\d^s}{\d t^s}\biggl(\frac{t+k}{\prod_{i=0}^{a_j}(t+i)}\biggr)\bigg|_{t=-k}
		&\equiv-(-1)^s\frac1{s!}\,\frac{\d^s}{\d t^s}\biggl(\frac{\Gamma_p(t+r_j)}{\Gamma_p(t+1)}\biggr)\bigg|_{t=k}\mod p
		\quad\text{for}\; 0\le k\le a_j;
		\displaybreak[2]\\
		\frac1{s!}\,\frac{\d^s}{\d t^s}\biggl(\frac1{\prod_{i=0}^{a_j}(t+i)}\biggr)\bigg|_{t=-k}
		&\equiv\phantom-(-1)^s\frac1{s!}\,\frac{\d^s}{\d t^s}\biggl(\frac{\Gamma_p(t+r_j)}{\Gamma_p(t+1)}\biggr)\bigg|_{t=k}\mod p
		\quad\text{for}\; a_j<k\le a_3.
		\end{align*}
			This computation implies that
		\begin{align*}
		\wt D_k
		&=\frac1{(j-1)!}\,\frac{\d^{j-1}}{\d t^{j-1}}\bigl(\wt R(t)(t+k)^j\bigr)\bigg|_{t=-k}
		\\
		&\equiv-\frac1{(j-1)!}\,\frac{\d^{j-1}}{\d t^{j-1}}\biggl(\frac{\prod_{\ell=1}^4\Gamma_p(r_\ell+t)}{\Gamma_p(1+t)^4}\biggr)\bigg|_{t=k}
		\mod p
		\end{align*}
		for $j=1,2,3$ and $a_{3-j}<k\le a_{4-j}$; therefore,
		\begin{align*}
		-\sum_{k=0}^{a_3}\wt D_k
		&=\sum_{k=0}^{a_1}\frac12\,\frac{\d^2}{\d t^2}\biggl(\frac{\prod_{\ell=1}^4\Gamma_p(r_\ell+t)}{\Gamma_p(1+t)^4}\biggr)\bigg|_{t=k}
		\\ &\qquad
		+\sum_{k=a_1+1}^{a_2}\frac{\d}{\d t}\biggl(\frac{\prod_{\ell=1}^4\Gamma_p(r_\ell+t)}{\Gamma_p(1+t)^4}\biggr)\bigg|_{t=k}
		\\ &\qquad
		+\sum_{k=a_2+1}^{a_3}\frac{\prod_{\ell=1}^4\Gamma_p(r_\ell+t)}{\Gamma_p(1+t)^4}\bigg|_{t=k}
		\mod p.
		\end{align*}
		Expanding the derivatives on the right-hand side of the resulting equality { using both identities of Lemma \ref{gp-poch}} and \eqref{eq:GammaRis} we arrive at
		\begin{align*}
		\frac{\prod_{\ell=1}^4\Gamma_p(r_\ell+t)}{\Gamma_p(1+t)^4}\bigg|_{t=k}
		&=\frac{\prod_{\ell=1}^4\Gamma_p(r_\ell)}{k!^4}\,\frac{\prod_{\ell=1}^4\Gamma_p(r_\ell+k)}{\prod_{\ell=1}^4\Gamma_p(r_\ell)}
		\\
		&=\frac{(-1)^{a_1+a_2}}{k!^4}\biggl(\frac1{pr_1'}\biggr)^{\nu(k,a_1)}\biggl(\frac1{pr_2'}\biggr)^{\nu(k,a_2)}\prod_{\ell=1}^4(r_\ell)_k.
		\end{align*}
		It follows that
		$$
		-p^2\sum_{k=0}^{a_3}\wt D_k\equiv(-1)^{a_1+a_2}C_2\mod{p^3}.
		$$
		Thus, the residue sum formula \eqref{sum-res1} implies the desired congruence
		$C_2\equiv0\mod{p^3}$ and completes the proof of Lemma~\ref{modp^3-reduction}.
	\end{proof}
	
	\subsection{Companion congruences}\label{ss:4.3}
	We now record two companion congruences which will be
	employed in our second proof of Theorem \ref{thm:main},
	for all primes $p>5$ (not necessary ordinary).
	This is a bi-product of the derivation above, when the residue sum computation is performed for the rational functions $tR(t)$ and $t\wt R(t)$
	in place of $R(t)$ and $\wt R(t)$,
	on using $\Res_{t=\infty}tR(t)=1$ (respectively, $\Res_{t=\infty}t\wt R(t)=0$)
	as well as \eqref{refl}. In these settings, the analysis in Section~\ref{ss:4.2} reveals us with the following claim.
	
	\begin{Lemma}\label{cor:1}
		We have
		\begin{align*}
		\wt C_1&:=p\sum_{k=0}^{a_2}\frac{\prod_{j=1}^4(r_j)_k}{k!^4}\,(J_1(k)k+1)
		\\&\qquad
		+\frac{1}{r_1'}\sum_{k=a_1+1}^{a_3}\frac{\prod_{j=1}^4(r_j)_k}{k!^4}k
		+\frac{1}{r_2'}\sum_{k=a_2+1}^{a_3}\frac{\prod_{j=1}^4(r_j)_k}{k!^4}k
		\\
		&\phantom:\equiv (-1)^{a_1+a_2}p \mod p^3;
		\displaybreak[2]\\
		\wt C_2&:=p^2\sum_{k=0}^{a_1}\frac{\prod_{j=1}^4(r_j)_k}{k!^4}\(J_2(k)k^2+J_1(k)k\)
		\\ &\qquad
		+p\sum_{k=a_1+1}^{a_2}\frac{\prod_{j=1}^4(r_j)_k}{k!^4}
		(J_1(k)k+1)\frac{k}{r_1'}
		+\frac1{r_1'r_2'}\sum_{k=a_2+1}^{a_3}\frac{\prod_{j=1}^4(r_j)_k}{k!^4}k^2
		\\
		&\phantom: \equiv-(-1)^{a_1+a_2}p^2 \mod p^3.
		\end{align*}
	\end{Lemma}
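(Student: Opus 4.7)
The plan is to mimic the residue-sum derivation from the proof of Lemma~\ref{modp^3-reduction}, but apply it to the modified rational functions $tR(t)$ and $t\wt R(t)$ in place of $R(t)$ and $\wt R(t)$. The extra factor of $t$ has two effects: it changes the degree count at infinity (so the residue at $\infty$ of $tR(t)$ becomes $\pm1$, while $t\wt R(t)$ still decays faster than $1/t$ and has vanishing residue there), and via the identity $t=(t+k)-k$ it shifts each finite residue by an extra term proportional to $k$. These shifted contributions are precisely what produce the non-zero right-hand sides in $\wt C_1$ and $\wt C_2$.

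For $\wt C_1$, I would read off from the partial-fraction decomposition of $R(t)$ established in Section~\ref{ss:4.2} that $\Res_{t=-k}\,tR(t)=A_k-kB_k$, and the residue sum theorem then gives $\sum_{k=0}^{p-1}(A_k-kB_k)=\pm1$. Multiplying this exact identity by $p(-1)^{a_1+a_2+1}$ and inserting the $\mod p^2$ expressions for $A_k$ and $B_k$ derived earlier (with $A_k\equiv(-1)^{a_1+a_2}\prod_j(r_j)_k/k!^4$ and the explicit $J_1(k)$-plus-singular-$1/(pr_j')$ description of $B_k$ on the ranges $[0,a_1]$, $(a_1,a_2]$, $(a_2,a_3]$), then dropping all terms of $p$-adic order $\ge3$, the resulting expression matches $\wt C_1$ exactly: the ``$+1$'' in $(J_1(k)k+1)$ comes from the $A_k$ piece, while the $J_1(k)k$ together with the $k/r_1'$, $k/r_2'$ pieces come from $kB_k$.

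For $\wt C_2$ the construction is analogous but applied to $t\wt R(t)$. Using $t=(t+k)-k$ in the partial-fraction expansion of $\wt R(t)$ gives $\Res_{t=-k}\,t\wt R(t)=\wt B_k-k\wt D_k$ for $0\le k\le a_2$ and $-k\wt D_k$ for $a_2<k\le a_3$; since the residue at infinity vanishes, the residue sum theorem yields
\begin{equation*}
\sum_{k=0}^{a_2}\wt B_k=\sum_{k=0}^{a_3}k\wt D_k.
\end{equation*}
Multiplying by $-p^2$, the right-hand sum reproduces $(-1)^{a_1+a_2}\wt C_2$ via the same $\Gamma_p$-derivative calculation used in Section~\ref{ss:4.2} that identified $-p^2\sum_k\wt D_k$ with $(-1)^{a_1+a_2}C_2$. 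For the left-hand sum, one computes $\wt B_k$ analogously to $\wt D_k$ but using the next-to-leading Taylor coefficient of $\prod_\ell\Gamma_p(r_\ell+t)/\Gamma_p(1+t)^4$ at $t=k$ (i.e., one fewer derivative in the same pole), and combining with the reflection-formula identity~\eqref{refl} gives $p^2\sum_{k=0}^{a_2}\wt B_k\equiv(-1)^{a_1+a_2}p^2\mod p^3$. Subtracting yields the claimed congruence $\wt C_2\equiv-(-1)^{a_1+a_2}p^2\mod p^3$.

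The main technical obstacle is careful bookkeeping of $p$-adic orders across the ranges $[0,a_1],\,(a_1,a_2],\,(a_2,a_3]$: the singular $1/(pr_j')$ pieces in $B_k$ (and the analogous pieces in $\wt B_k$) pair with the new factor of $k$ in subtle ways, and every cancellation must be traced modulo $p^3$ together with the overall sign $(-1)^{a_1+a_2}$ coming from~\eqref{refl}. Once this accounting is organised consistently, the claim reduces to the same residue calculations already carried out in the proof of Lemma~\ref{modp^3-reduction}, modified only by the presence of the extra $t$ factor.
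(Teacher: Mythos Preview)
Your treatment of $\wt C_1$ is correct and is exactly what the paper sketches: the extra factor $t$ turns each finite residue of $R(t)$ into $A_k-kB_k$, the residue at infinity of $tR(t)$ is $-1$ so that $\sum_k(A_k-kB_k)=1$, and after multiplying by $p$ and substituting the mod-$p^2$ expressions for $A_k$ and $B_k$ from Section~\ref{ss:4.2} one lands on $(-1)^{a_1+a_2}\wt C_1\equiv p\pmod{p^3}$.

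The $\wt C_2$ part, however, has a genuine gap. You assert that $-p^2\sum_k k\wt D_k\equiv(-1)^{a_1+a_2}\wt C_2$, by analogy with the identification $-p^2\sum_k\wt D_k\equiv(-1)^{a_1+a_2}C_2$ in Section~\ref{ss:4.2}. But running that same $\Gamma_p$-derivative calculation with the extra factor $k$ gives
\begin{equation*}
-p^2\sum_{k=0}^{a_3}k\wt D_k
\equiv(-1)^{a_1+a_2}\biggl(p^2\sum_{k=0}^{a_1}\frac{\prod_j(r_j)_k}{k!^4}\,kJ_2(k)
+\frac{p}{r_1'}\sum_{k=a_1+1}^{a_2}\frac{\prod_j(r_j)_k}{k!^4}\,kJ_1(k)
+\frac{1}{r_1'r_2'}\sum_{k=a_2+1}^{a_3}\frac{\prod_j(r_j)_k}{k!^4}\,k\biggr),
\end{equation*}
whereas $\wt C_2$ carries $k^2J_2+kJ_1$, $(kJ_1+1)k$ and $k^2$ in the three ranges. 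The passage from $C_2$ to $\wt C_2$ is \emph{not} ``multiply each summand by $k$''. The correct identification is
\begin{equation*}
(-1)^{a_1+a_2}\wt C_2\;\equiv\;p^2\biggl(-\sum_{k=0}^{a_3}k^2\wt D_k+\sum_{k=0}^{a_2}k\wt B_k\biggr)\pmod{p^3},
\end{equation*}
which your $t\wt R(t)$ identity $\sum\wt B_k=\sum k\wt D_k$ does not evaluate. One way to close the argument is to apply the residue sum for $t^2\wt R(t)$ (still vanishing at infinity) to rewrite the bracket as $\sum_{k\le a_1}\wt A_k-\sum_{k\le a_2}k\wt B_k$, and then observe that modulo~$p$ this equals $-(-1)^{a_1+a_2}\wt C_1/p\equiv-1$, feeding on the $\wt C_1$ congruence already established. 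In short, your outline for $\wt C_2$ needs one more power of~$t$ than it provides, and the evaluation of $\sum\wt B_k$ you propose does not yield $(-1)^{a_1+a_2}p^2$ on its own.
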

	
	We will use these congruences later in Section~\ref{ss:charsum} for our different treatment of Theorem~\ref{thm:main},
	where $\wt C_1$ and $\wt C_2$ will play a role similar to that of $C_1$ and $C_2$ in the proof of Lemma~\ref{modp^3-reduction}.
	
	\section{Hypergeometric motives and the modularity of~rigid~Calabi--Yau~threefolds}
	\label{ss:charactersums}
	
	It is now timely to formally introduce hypergeometric motives and their connection to both classical and finite hypergeometric functions, and to $L$-functions.
	The concept was proposed by Katz in \cite{Katz} and developed into a fundamental object of study by Roberts, Rodriguez-Villegas and Watkins \cite{RVRW}.
	The main emphasis of such developments is on hypergeometric motives that admit lifts to~$\Q$; all cases considered below fall into this category.
	
	In the formulation of hypergeometric motives defined over $\Q$, the data consist of
	a parameter $\l\in \Q^\times$ and two multi-sets
	$$
	\balpha=\{\alpha_1,\dots, \alpha_m\}\quad\text{and}\quad
	\bbeta=\{\beta_1,\dots, \beta_m\}
	$$
	from $\Q^m$, each closed under the Galois conjugation, that is,
	$\prod_{j=1}^m(x-e^{2\pi i\alpha_j})\in \Z[x]$ and $\prod_{j=1}^m(x-e^{2\pi i\beta_j})\in \Z[x]$.
	Recall that the closedness of $\balpha$ under the Galois conjugation implies that
	$c\balpha\equiv\balpha\mod{\Z^m}$ as multi-sets, for any non-zero $c\in\Z$ relatively prime to the common denominator of $\alpha_1,\dots,\alpha_m$.
	
	Additionally, assume that the hypergeometric data are \emph{primitive} meaning that $\alpha_j-\beta_\ell\notin \Z$ for all $j$ and $\ell$.
	Under the latter constraint, there is an explicit way to numerically compute the $L$-function of generic degree $m$ attached to the hypergeometric data;
	the details are recorded in the notes \cite{RVRW} and the related \texttt{Magma} documentation on hypergeometric motives \cite{Watkins} by Watkins.
	
	In the classical setting, when $\beta_m=1$ and none of $\beta_j$ is a non-negative integer,
	the corresponding (generalized) hypergeometric function is defined as
	\begin{align}
	\PFQ{m}{m-1}{\alpha_1,\,\alpha_2,\,\dots,\,\alpha_m}{\beta_1,\,\dots,\,\beta_{m-1}}{\l}
	&=\sum_{k=0}^\infty\prod_{j=1}^m\frac{(\alpha_j)_k}{(\beta_j)_k}\,\l^k
	\nonumber\\
	&=\sum_{k\in\mathbb Z}\prod_{j=1}^m\frac{\G(\alpha_j+k)}{\G(\alpha_j)}\,\frac{\G(1-\beta_j-k)}{\G(1-\beta_j)}\bigl((-1)^m\l\bigr)^k,
	\label{eq:HGS}
	\end{align}
	where $(\alpha)_k=\G(\alpha+k)/\G(\alpha)$ as in the introduction.
	
	We stress on the fact that the above conditions on hypergeometric data are invariant under shifts of any $\alpha_j$ or $\beta_j$ by integers;
	the invariance will be also featured by the finite-field analogue of the hypergeometric function. However, the hypergeometric function \eqref{eq:HGS} itself
	is sensitive to such integer shifts\,---\,this is the subject of contiguous relations in the classical setting \cite[Section 2.5]{AAR}. To avoid unnecessary
	sophistication we will always assume that the multi-sets $\balpha$ and $\bbeta$ are normalized so that $0<\alpha_j\le1$ and $0<\beta_j\le1$ for $j=1,\dots,m$.
	This is the case \eqref{hyp:main} we deal with in our applications.
	
	\subsection{Finite hypergeometric functions}\label{ss:3.1}
	The theory of finite hypergeometric functions was initiated by Greene \cite{Greene} and Katz \cite{Katz},
	with considerable developments over the recent years\,---\,see \cite{BCM, McCarthy-Pacific, RV2}.
	An interpretation of the theory in connection with Galois representations \cite{Katz, WIN3X}
	yields some fruitful results in computing zeta functions of algebraic varieties defined over finite fields.
	
	We first fix some more notation. Let $\F_q$ denote the finite field of size $q$ and $\theta$ a fixed non-trivial additive character of $\F_q$.
	Let $\widehat{\F_q^\times}$ be the set of all multiplicative characters of $\F_q$ with values in $\mathbb C_p^\times$, which is a cyclic group of size $q-1$,
	and $\omega$ a generator of $\widehat {\F_q^\times}$, the Teichm\"uller character, so that $\widehat{\F_q^\times}=\{\omega^k\}_{k=0}^{q-2}$.
	Denote by $\eps$ the trivial multiplicative character.
	For any character $\chi\in\widehat{\F_q^\times}$ including $\eps$, we adapt the convention that $\chi(0)=0$.
	Define the Gauss sum of $\chi$ by
	$$
	g(\chi):=\sum_{x\in\F_q}\chi(x)\theta(x).
	$$

	Under the hypothesis $(q-1)\alpha_j,(q-1)\beta_j\in \Z$ for $j=1,\dots,m$,  Katz defines in \cite[Section~8.2]{Katz} the  character sum
	\begin{equation}\label{eq:Katz}
	H_q(\balpha,\bbeta;\l)^K:=\frac1{q-1}\sum_{k=0}^{q-2}\omega^{-k}(\l)
	\prod_{j=1}^m g(\omega^{k+(q-1)\alpha_j})g(\omega^{-k-(q-1)\beta_j})\,\omega^{k+(q-1)\beta_j}(-1).
	\end{equation}
	Following \cite[Definition 1.1]{BCM}, we use the following modification.
	It corresponds to Katz's   ``canonical'' version (independent of the choice of an additive character) in his later paper \cite[Section~4]{Katz09}: 
	\begin{equation}\label{eq:H}
	H_q(\balpha,\bbeta;\l):=\frac1{1-q}\sum_{k=0}^{q-2}
	\prod_{j=1}^m\left (\frac{g(\omega^{k+(q-1)\alpha_j})g(\omega^{-k-(q-1)\beta_j})}{g(\omega^{(q-1)\alpha_j})g(\omega^{-(q-1)\beta_j})}\right)\,\omega^k\bigl((-1)^m\l\bigr).
	\end{equation}
	For $\balpha=\{r_1,1-r_1,r_2,1-r_2\}$, $\beta=\{1,1,1,1\}$ and $\l=1$ as in our context, we have
	\begin{equation}\label{eq:weight}H_q(\balpha,\beta;1)=-\omega^{(q-1)(r_1+r_2)}(-1)\cdot H_q(\balpha,\beta;1)^K/q^2.
	\end{equation}
	
	Gauss sums are known to be finite-field analogues of the Gamma function values (see~\cite{WIN3X} for a dictionary between the classical and finite-field settings),
	therefore, the right-hand side of \eqref{eq:H} reminisces \eqref{eq:HGS}.  Next, we follow \cite{BCM} to extend the definition to almost all finite fields $\F_q$,
	without the restriction on~$q$ to satisfy $(q-1)\alpha_j\in\Z$ and $(q-1)\beta_j\in \Z$ for $j=1,\dots,m$.

	\subsection{Finite hypergeometric sums}
	
	For a given integer $d\ge1$, let
	$$
	\varphi(d)=\sum_{n\mid d}\mu\biggl(\frac dn\biggr)n
	$$
	denote the Euler number of $d$, where $\mu(\,\cdot\,)$ is the M\"obius function. Write
	$$
	\frac 1{(x-1)^{\varphi(d)}}\cdot\prod_{n\mid d}(x^n-1)^{\mu(d/n)}=\frac{\prod_{i=1}^r(x^{p_i}-1)}{\prod_{j=1}^s (x^{q_j}-1)},
	$$
	where $\{p_i\}$, $\{q_j\}$ are disjoint multi-sets of positive integers, and define
	\begin{equation}\label{eq:Md}
	M_d:=\prod_{n\mid d}(n^n)^{\mu(d/n)}.
	\end{equation}
	Then it is not hard to verify (see \cite{BCM}) that for $k\in \mathbb Z_{>0}$,
	\begin{equation}\label{eq:24}
	\prod_{\substack{\ell=1\\(\ell,d)=1}}^d\frac{(\ell/d)_k}{k!}
	=\frac{(p_1k)!\cdots (p_rk)!}{(q_1k)!\cdots (q_sk)!}\,M_d^{-k}
	=\frac{1}{k!^{\varphi(d)}} \prod_{n\mid d}\bigl((nk)!\,n^{-nk}\bigr)^{\mu(d/n)}
	\end{equation}
	
	Now define a function $S_d(\chi)$ on characters $\chi\in \widehat{\F_q^\times}$ by
	\begin{equation}\label{eq:gauss}
	S_d(\chi):= g(\chi^{-1})^{\varphi(d)}\cdot\prod_{n\mid d}\bigl(g(\chi^n)\,\chi(n^{-n})\bigr)^{\mu(d/n)}.
	\end{equation}
	Noting that $k!=\Gamma(k+1)$ and $(nk)!=\Gamma(nk+1)$ we see that this is a finite-field analogue of the last term in \eqref{eq:24}.
	
	Notice that each multi-set $\balpha=\{\alpha_1,\dots,\alpha_m\}$ closed under the Galois conjugation and satisfying $0<\alpha_j\le1$ for $j=1,\dots,m$,
	can be partitioned in the form
	\begin{equation}\label{eq:partition}
	\balpha=\bigcup_{i=1}^t\Sigma_{d_i},
	\quad\text{where}\; \Sigma_d=\{\ell/d:0<\ell\le d,\ (\ell,d)=1\} \;\text{for}\; d\in\{d_1,\dots,d_t\},
	\end{equation}
	with $\varphi(d_1)+\dots+\varphi(d_t)=m$. The examples $\balpha=\{r_1,1-r_1,r_2,1-r_2\}$ showing up in Theorem~\ref{thm:main}
	all come from the multi-sets $\Sigma_d$ with $\varphi(d)=1$, $2$ or~$4$; the related data are collected in Table~\ref{tab2}.
	Our next proposition is a version of a special case of Theorem~1.3 in \cite{BCM}, in which we escape the use of reflection formula for the Gauss sums,
	so that denominators are kept in the intermediate character sums as they are.
	
	\begin{Proposition}\label{prop:2}
		Assume that the hypergeometric data consist of $\l\in\Q^\times$, $\balpha=\bigcup_{i=1}^t\Sigma_{d_i}$ and $\bbeta=\{1,\dots,1\}$, and are primitive.
		Then for any finite field $\F_q$ with $q\equiv 1\mod d_i$ for all~$i$,
		\begin{equation}\label{eq:H3}
		H_q(\balpha, \bbeta;\l)=\frac{1}{1-q}\sum_{k=0}^{q-2}\prod_{i=1}^t S_{d_i}(\omega^k)\,\omega^k\bigl((-1)^m\l\bigr).
		\end{equation}
	\end{Proposition}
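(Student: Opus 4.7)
The plan is to simplify the defining expression \eqref{eq:H} for $H_q$ step by step, using the hypotheses $\bbeta = \{1, \dots, 1\}$, $\balpha = \bigsqcup_{i=1}^t \Sigma_{d_i}$, and $q \equiv 1 \pmod{d_i}$ together with the Hasse--Davenport product relation for Gauss sums. First, I would exploit the trivial form of $\bbeta$: since $(q-1)\beta_j = q-1$, one has $\omega^{-(q-1)\beta_j} = \epsilon$ and $\omega^{-k-(q-1)\beta_j} = \omega^{-k}$, so $g(\omega^{-(q-1)\beta_j}) = g(\epsilon) = -1$. The $\bbeta$-product in \eqref{eq:H} therefore collapses to a scalar multiple of $g(\omega^{-k})^m$, which distributes among the $\Sigma_{d_i}$'s as $\prod_{i=1}^t g(\omega^{-k})^{\varphi(d_i)}$ and matches the leading factor $g(\chi^{-1})^{\varphi(d)}$ in the definition of $S_d(\chi)$.

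Second, I would decompose the $\balpha$-contribution according to the partition $\balpha = \bigsqcup_i \Sigma_{d_i}$. Setting $\psi_d := \omega^{(q-1)/d}$ (a character of exact order $d$, well defined by $d \mid q-1$) and
\begin{equation*}
P_d(\chi) := \prod_{\substack{1 \le \ell \le d\\(\ell, d) = 1}} g(\chi \psi_d^\ell),
\end{equation*}
the numerator and denominator of the $\balpha$-ratio factor respectively as $\prod_i P_{d_i}(\omega^k)$ and $\prod_i P_{d_i}(\epsilon)$. It then suffices to identify $g(\chi^{-1})^{\varphi(d)} \cdot P_d(\chi)/P_d(\epsilon)$ with $S_d(\chi)$ for each $d = d_i$.

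The key identification uses Hasse--Davenport. Via the bijection $\{1, \dots, d\} = \bigsqcup_{n \mid d}\{\ell : \gcd(\ell, d) = d/n\}$, the full product $\prod_{\ell=0}^{d-1} g(\chi \psi_d^\ell)$ splits as $\prod_{n \mid d} P_n(\chi)$, and M\"obius inversion gives
\begin{equation*}
P_d(\chi) = \prod_{n \mid d}\biggl(\prod_{\ell=0}^{n-1} g(\chi \psi_n^\ell)\biggr)^{\mu(d/n)}.
\end{equation*}
Now applying the Hasse--Davenport formula $\prod_{\ell=0}^{n-1} g(\chi \psi_n^\ell) = \chi(n^{-n}) g(\chi^n) \prod_{\ell=1}^{n-1} g(\psi_n^\ell)$ to each inner product, the canonical constants $c_n := \prod_{\ell=1}^{n-1} g(\psi_n^\ell)$ depend only on $n$ and $q$ and cancel between $P_d(\chi)$ and $P_d(\epsilon)$. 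Primitivity of the hypergeometric data excludes $d_i = 1$ (since $\beta_\ell = 1$ for all $\ell$), whence $\sum_{n \mid d_i} \mu(d_i/n) = 0$ for every $i$, and the surviving factors assemble into precisely the M\"obius expression defining $S_d(\chi)$ when combined with the $g(\chi^{-1})^{\varphi(d)}$ factor from the $\bbeta$-simplification.

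The main obstacle I anticipate is the careful bookkeeping of signs. The $-1$ factors arising from $g(\epsilon)$ appear in several distinct places\,---\,the $\bbeta$-simplification, the denominators $g(\epsilon^n)$ inside each M\"obius product, and the reconciliation of $\omega^k((-1)^m \lambda)$ in \eqref{eq:H} with $\omega^k((-1)^t \lambda)$ in the target \eqref{eq:H3}\,---\,and must ultimately balance using the parities of $m = \sum_i \varphi(d_i)$ and $t$, the constraint that every $d_i \ge 2$ (so $\varphi(d_i)$ is even unless $d_i = 2$), and the identity $\omega^k(-1) = (-1)^k$ valid for odd $q$. Keeping this accounting straight is where the bulk of the effort lies.
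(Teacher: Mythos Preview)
Your approach is essentially the same as the paper's: both proofs simplify the $\bbeta$-contribution, partition the $\balpha$-contribution according to $\balpha=\bigsqcup_i\Sigma_{d_i}$, and invoke the Hasse--Davenport product relation together with M\"obius inversion to identify each block with $S_{d_i}(\omega^k)$. The paper's proof is terser---it packages each block into a quantity $\mathsf P_d(k)$ and defers the M\"obius/Hasse--Davenport computation to \cite{BCM}---while you spell these details out explicitly; your instinct that the sign bookkeeping between $(-1)^m$ and $(-1)^t$ is the delicate point is well founded, and the paper's own proof likewise does not make this reconciliation explicit.
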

	
	\begin{proof}
		First, by \cite{BCM} the finite hypergeometric function in \eqref{eq:H} can be rewritten as
		$$
		H_q(\balpha, \bbeta;\l)=\frac{1}{1-q}\sum_{k=0}^{q-2}\prod_{i=1}^t \mathsf P_{d_i}(k)\,\omega^k\bigl((-1)^m\l\bigr),
		$$
		where
		$$
		\mathsf P_d(k)
		=\prod_{\ell\in(\Z/d\Z)^\times}\frac{g(\omega^{k+(q-1)\ell/d})g(\omega^{-k})}{g(\omega^{(q-1)\ell/d})g(\eps)}
		=g(\omega^{-k})^{\varphi(d)}\prod_{\ell\in(\Z/d\Z)^\times}\frac{g(\omega^{k+(q-1)\ell/d})}{g(\omega^{(q-1)\ell/d})g(\eps)}.
		$$
		Following the proof of \cite[Theorem 1.3]{BCM}, we can reformulate $\mathsf P_d(k)$ as
		$$
		\mathsf P_d(k)=g(\omega^{-k})^{\varphi(d)}\prod_{n\mid d}\bigl(g(\omega^{nk})\omega^k(n^{-n})\bigr)^{\mu(d/n)}
		$$
		using the multiplication formula of Gauss sums (also known as the Hasse--Davenport relations \cite{BEW, BCM}).
		Notice that the function $\mathsf P_d(k)$ agrees with $S_d(\omega^k)$, hence the desired identity follows.
	\end{proof}
	
	We remark that $H_q(\balpha,\bbeta;\l)$ can be also given directly via Gauss sums (see \cite[Definition~1.1]{BCM}) and that the right-hand side of \eqref{eq:H3} is well defined for any finite field $\F_q$.
	Following \cite{BCM}, we use this more general definition of $H_q(\balpha, \bbeta;\l)$ in further discussions.
	
	\begin{table}[h]
		\caption{Cyclotomic data}
		\begin{center}
			$$
			\begin{array}{|c||c|c|c|c|c|}
			\hline
			d&\displaystyle \frac{\prod_{i=1}^r(x^{p_i}-1)\vphantom{|^{0^0}}}{\prod_{j=1}^s (x^{q_j}-1)}& (r,s) & \{p_i\},\ \{q_j\}& M_d \ \text{(see \eqref{eq:Md})} \\ \hline\hline
			2& \displaystyle\frac{x^2-1\vphantom{|^{0^0}}}{(x-1)^2}&  (1,2)& \begin{array}{c} p_1=2 \\ q_1=q_2=1\end{array}&2^2\\\hline
			3& \displaystyle\frac{x^3-1\vphantom{|^{0^0}}}{(x-1)^3}&  (1,3)& \begin{array}{c} p_1=3 \\ q_1=q_2=q_3=1\end{array}&3^3\\\hline
			4& \displaystyle\frac{x^4-1\vphantom{|^{0^0}}}{(x-1)^2(x^2-1)}&  (1,3)& \begin{array}{c} p_1=4 \\ q_1=q_2=1,q_3=2 \end{array}&2^6\\\hline
			6& \displaystyle\frac{x^6-1\vphantom{|^{0^0}}}{(x-1)(x^2-1)(x^3-1)}&  (1,3)& \begin{array}{c} p_1=6 \\ q_1=1,q_2=2,q_3=3\end{array}&2^4\cdot 3^3\\\hline
			5& \displaystyle\frac{x^5-1\vphantom{|^{0^0}}}{(x-1)^5}&  (1,5)& \begin{array}{c} p_1=5 \\ q_1=\dots =q_5=1\end{array}&5^5\\\hline
			8& \displaystyle\frac{x^8-1\vphantom{|^{0^0}}}{(x-1)^4(x^4-1)}&  (1,5)& \begin{array}{c} p_1=8 \\ q_1=\dots=q_4=1,\\ q_5=4\end{array}&2^{16}\\ \hline
			10& \displaystyle\frac{x^{10}-1}{\(x-1\)^3(x^2-1)(x^5-1)}& (1,5)& \begin{array}{c} p_1=10 \\ q_1=q_2=q_3=1,\\ q_4=2, q_5=5\end{array}&2^{8}\cdot 5^5\\ \hline
			12&\displaystyle\frac{\(x^{12}-1\)(x^2-1)}{\(x-1\)^4(x^4-1)(x^6-1)}& (2,6)& \begin{array}{c} p_1=2, p_2=12 \\ q_1=\dots=q_4=1,\\ q_5=4, q_6=6\end{array}&2^{12}\cdot 3^6\\
			\hline
			\end{array}
			$$
		\end{center}
		\label{tab2}
	\end{table}
	
	\subsection{Reduction to $p$-adic Gamma sums}\label{ss:5.1}
	The Gross--Koblitz formula \cite{GK} expresses Gauss sums $g(\chi)$ in terms of the $p$-adic Gamma function: for $0\le k\le p-2$,
	$$
	g(\omega^{-k})=-\pi_p^k\G_p\biggl(\frac{k}{p-1}\biggr),
	$$
	where $\pi_p$ is a fixed root of $x^{p-1}+p=0$ in $\mathbb C_p$.
	For $p>2$ and an integer $n$ relatively prime with $p$,
	the product formula of the $p$-adic Gamma function \cite{Cohen, McCarthy-RV} reads
	\begin{equation}
	\prod_{\ell=0}^{n-1}\Gamma_p\biggl(\frac{x+\ell}n\biggr)
	=\omega(n^{(1-x)(1-p)})\,\Gamma_p(x)\prod_{\ell=1}^{n-1}\Gamma_p\biggl(\frac\ell n\biggr),
	\label{prodG}
	\end{equation}
	where $x$ is of the form $m/(p-1)$ with $0\le m<p-1$.
	
	\begin{Lemma}
		\label{lem:aux}
		Given a prime $p>2$ and an integer $d>1$ such that $(d,p)=1$,
		the function $\nu(k,x)$ defined in \eqref{eq:nu} satisfies
		$$
		\sum_{\substack{\ell=1\\(\ell,d)=1}}^d\nu\biggl(k,\frac\ell d(p-1)\biggr)
		=-\sum_{n\mid d}\mu\biggl(\frac dn\biggr)\biggl\lfloor\frac{-nk}{p-1}\biggr\rfloor.
		$$
	\end{Lemma}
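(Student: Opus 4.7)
The plan is to combine M\"obius inversion on the coprimality condition with Hermite's identity for floor-function sums. First, the standard M\"obius identity $\mathbf{1}[(\ell,d)=1] = \sum_{m \mid \gcd(\ell,d)}\mu(m)$, together with the reparametrization $n = d/m$, rewrites
\[
\sum_{\substack{\ell=1\\(\ell,d)=1}}^d f\bigl(\tfrac{\ell}{d}\bigr) = \sum_{n\mid d}\mu\bigl(\tfrac{d}{n}\bigr)\sum_{\ell=1}^n f\bigl(\tfrac{\ell}{n}\bigr)
\]
for any function $f$. I would take $f(x) = \nu(k,x(p-1))$, which reduces the claim to evaluating the inner sum $S(n,k) := \sum_{\ell=1}^n \nu(k,\tfrac{\ell}{n}(p-1))$ for each $n \mid d$.

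Next, unfolding the definition of $\nu$ gives $S(n,k) = -\sum_{\ell=1}^n \lfloor \ell/n - k/(p-1)\rfloor$. After the substitution $\ell = n - \ell'$ followed by an elementary reindexing, this sum fits Hermite's identity $\sum_{m=0}^{n-1}\lfloor x + m/n\rfloor = \lfloor nx\rfloor$ applied with $x = 1/n - k/(p-1)$, and a short computation yields the closed form
\[
S(n,k) = -1 - \Bigl\lfloor \tfrac{-nk}{p-1}\Bigr\rfloor.
\]

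Finally, substituting this evaluation back into the M\"obius expression gives
\[
\sum_{\substack{\ell=1\\(\ell,d)=1}}^d \nu\bigl(k,\tfrac{\ell}{d}(p-1)\bigr) = -\sum_{n\mid d}\mu\bigl(\tfrac{d}{n}\bigr)\Bigl\lfloor \tfrac{-nk}{p-1}\Bigr\rfloor - \sum_{n\mid d}\mu\bigl(\tfrac{d}{n}\bigr),
\]
and the second sum on the right vanishes because the hypothesis $d>1$ ensures $\sum_{n\mid d}\mu(d/n)=0$. The main subtlety is the constant $-1$ appearing in the closed form for $S(n,k)$: at first sight it looks like an obstruction to matching the right-hand side of the lemma, but it is killed precisely by the assumption $d>1$, which is presumably why that hypothesis is stated. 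Beyond this cancellation, the argument reduces to routine manipulation of floor-function sums, and I do not foresee any serious obstacle.
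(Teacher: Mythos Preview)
Your proposal is correct and follows essentially the same approach as the paper: M\"obius inversion on the coprimality condition combined with Hermite's identity $\sum_{m=0}^{n-1}\lfloor x+m/n\rfloor=\lfloor nx\rfloor$. The only cosmetic difference is that the paper indexes the inner sum over $\ell=0,\dots,n-1$ rather than $\ell=1,\dots,n$, so Hermite applies directly with $x=-k/(p-1)$ and no extraneous constant $-1$ appears; your indexing shifts this to $x=1/n-k/(p-1)$, producing the $-1$ which you then correctly kill via $\sum_{n\mid d}\mu(d/n)=0$.
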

	
	\begin{proof}
		Consider
		$$
		f(x)=\nu(k,x(p-1))=-\biggl\lfloor x-\frac k{p-1}\biggr\rfloor
		$$
		and notice that
		$$
		\sum_{\ell=0}^{n-1}f\biggl(\frac\ell n\biggr)
		=-\biggl\lfloor-\frac{nk}{p-1}\biggr\rfloor,
		$$
		since $\lfloor x\rfloor+\bigl\lfloor x+\frac1n\bigr\rfloor+\dots+\bigl\lfloor x+\frac{n-1}n\bigr\rfloor=\lfloor nx\rfloor$
		(see \cite[Division~8, Problem~9]{PS}). It remains to combine this summation with the combinatorial identity
		\begin{equation}
		\sum_{n\mid d}\mu\biggl(\frac dn\biggr)\sum_{\ell=0}^{n-1}f\biggl(\frac\ell n\biggr)
		=\sum_{n\mid d}\mu\biggl(\frac dn\biggr)\sum_{\ell=1}^{n-1}f\biggl(\frac\ell n\biggr)
		=\sum_{\substack{\ell=1\\(\ell,d)=1}}^df\biggl(\frac\ell d\biggr)
		\label{eq:comb}
		\end{equation}
		valid for a generic function $f(x)$ defined on the interval $0\le x<1$ (see \cite[Division~8, Problem~35]{PS}).
	\end{proof}
	
	\begin{Lemma}
		\label{lem:Sd}
		Let an integer $d\ge2$ be relatively prime with $p>2$.
		For the character sum defined in \eqref{eq:gauss} and an integer $k$, $0\le k\le p-2$,
		$$
		S_d(\omega^{k})=(-1)^{k\varphi(d)}
		\prod_{\substack{\ell=1\\(\ell,d)=1}}^d\frac{\Gamma_p\bigl(\bigl\{\frac\ell d-\frac k{p-1}\bigr\}\bigr)}
		{\Gamma_p\bigl(\frac\ell d\bigr)\,\Gamma_p\bigl(1-\frac{k}{p-1}\bigr)}(-p)^{\nu(k,\ell(p-1)/d)},
		$$
		where $\nu(k,x)$ is as in \eqref{eq:nu}.
	\end{Lemma}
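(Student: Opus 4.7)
The plan is to apply the Gross--Koblitz formula to every Gauss sum in the definition \eqref{eq:gauss} of $S_d(\omega^k)$, collect the resulting powers of $\pi_p$ and Teichm\"uller characters, and then reorganize the $\G_p$-factors via the product formula \eqref{prodG} together with the combinatorial identity \eqref{eq:comb}.

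First, for $0\le k\le p-2$ Gross--Koblitz gives $g(\omega^{-k})^{\varphi(d)}=(-1)^{\varphi(d)}\pi_p^{k\varphi(d)}\G_p(k/(p-1))^{\varphi(d)}$, while for each divisor $n\mid d$ reducing $nk$ modulo $p-1$ yields $g(\omega^{nk})=-\pi_p^{a_n}\G_p(\{-nk/(p-1)\})$, where $a_n:=-nk-(p-1)\lf-nk/(p-1)\rf\in\{0,\dots,p-2\}$. Plugging these into $S_d(\omega^k)$ and using that $\sum_{n\mid d}\mu(d/n)=0$ for $d\ge2$, I obtain
\[
S_d(\omega^k)=(-1)^{\varphi(d)}\,\pi_p^{E}\,\omega(M_d^{-k})\,\G_p(k/(p-1))^{\varphi(d)}\prod_{n\mid d}\G_p(\{-nk/(p-1)\})^{\mu(d/n)},
\]
with $E=k\varphi(d)+\sum_{n\mid d}\mu(d/n)a_n$. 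Since $\sum_{n\mid d}n\mu(d/n)=\varphi(d)$, the linear-in-$k$ terms cancel, leaving $E=-(p-1)\sum_{n\mid d}\mu(d/n)\lf-nk/(p-1)\rf$. Lemma~\ref{lem:aux} then identifies $E/(p-1)$ with $\sum_{(\ell,d)=1}\nu(k,\ell(p-1)/d)$, and using $\pi_p^{p-1}=-p$ the $\pi_p$-part becomes exactly the desired factor $\prod_{(\ell,d)=1}(-p)^{\nu(k,\ell(p-1)/d)}$.

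The remaining task is to transform $\prod_{n\mid d}\G_p(\{-nk/(p-1)\})^{\mu(d/n)}$ into the product over residues coprime to $d$ appearing in the statement. For each $n\mid d$ I apply \eqref{prodG} with $x=\{-nk/(p-1)\}$, which has the required form $m/(p-1)$ with $0\le m<p-1$. A direct check using the substitution $\ell'\equiv\lceil nk/(p-1)\rceil+\ell\pmod n$ shows that, as $\ell$ ranges over $\{0,1,\dots,n-1\}$, the values $(x+\ell)/n$ are exactly the fractional parts $\{\ell/n-k/(p-1)\}$, so \eqref{prodG} rewrites as
\[
\prod_{\ell=0}^{n-1}\G_p\bigl(\{\ell/n-k/(p-1)\}\bigr)=\omega(n^{(1-x)(1-p)})\,\G_p(\{-nk/(p-1)\})\prod_{\ell=1}^{n-1}\G_p(\ell/n).
\]
Splitting off the $\ell=0$ factor on the left, raising to the power $\mu(d/n)$, and taking the product over $n\mid d$, the combinatorial identity \eqref{eq:comb} converts the left-hand side into $\prod_{(\ell,d)=1}\G_p(\{\ell/d-k/(p-1)\})/\G_p(\ell/d)$; the detached $\ell=0$ factors $\G_p(\{-k/(p-1)\})^{\sum_{n\mid d}\mu(d/n)}$ drop out because the exponent vanishes for $d\ge 2$. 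The residual Teichm\"uller contribution $W:=\omega\bigl(\prod_{n\mid d}n^{(1-\{-nk/(p-1)\})(1-p)\mu(d/n)}\bigr)$ collapses to $\omega(M_d^{-k})$ upon noting that $\omega(n^{p-1})=1$ and $(1-p)\{-nk/(p-1)\}=-a_n$, which cancels the existing $\omega(M_d^{-k})$ factor.

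To conclude, I invoke the reflection formula $\G_p(r)\G_p(1-r)=(-1)^{[r]_0}$ with $r=k/(p-1)$ (noting $[k/(p-1)]_0=p-k$ for $1\le k\le p-2$) to convert $\G_p(k/(p-1))^{\varphi(d)}$ into $(-1)^{\varphi(d)(p-k)}/\G_p(1-k/(p-1))^{\varphi(d)}$; combined with the standing prefactor $(-1)^{\varphi(d)}$ and the fact that $p$ is odd, the accumulated sign becomes $(-1)^{\varphi(d)(1+p-k)}=(-1)^{k\varphi(d)}$, yielding the claimed identity. The hard part will be justifying the multiset identity that makes \eqref{prodG} applicable in the precise form above, and accounting carefully for the Teichm\"uller characters that must collapse to the clean prefactor $(-1)^{k\varphi(d)}$; the M\"obius-type combinatorial identity \eqref{eq:comb} together with $\sum_{n\mid d}\mu(d/n)=0$ is what enables the inversion from divisors of $d$ to residues coprime to $d$.
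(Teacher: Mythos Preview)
Your proposal is correct and follows essentially the same route as the paper's own proof: Gross--Koblitz to express each Gauss sum, Lemma~\ref{lem:aux} to identify the $\pi_p$-exponent with $\sum_{(\ell,d)=1}\nu(k,\ell(p-1)/d)$, the product formula~\eqref{prodG} at $x=\{-nk/(p-1)\}$ together with the multiset identity for $\{(x+\ell)/n\}_{\ell}$, the M\"obius inversion~\eqref{eq:comb}, and finally the reflection formula for $\Gamma_p(k/(p-1))$. The only cosmetic difference is bookkeeping of the Teichm\"uller character: you isolate $\omega(M_d^{-k})$ from the definition and then show the residual $W$ from~\eqref{prodG} equals it and cancels, whereas the paper merges the two at once into $\omega^{p-1}(n^{1+\lfloor -nk/(p-1)\rfloor})=1$; these are the same computation.
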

	
	\begin{proof}
		Substitution of the Gross--Koblitz formula into \eqref{eq:gauss} implies
		$$
		S_d(\omega^k)
		=(-1)^{\varphi(d)}\pi_p^{k\varphi(d)}\Gamma_p\biggl(\frac k{p-1}\biggr)^{\varphi(d)}
		\prod_{n\mid d}\biggl(-\pi_p^{(p-1)\{\frac{-nk}{p-1}\}}\Gamma_p\biggl(\biggl\{\frac{-nk}{p-1}\biggr\}\biggr)\omega^k(n^{-n})\biggr)^{\mu(d/n)}.
		$$
		First notice that the exponent of $\pi_p$ on the right-hand side is equal to
		\begin{align*}
		&
		k\varphi(d)+(p-1)\sum_{n\mid d}\mu\biggl(\frac dn\biggr)\biggl\{\frac{-nk}{p-1}\biggr\}
		\\ &\qquad
		=k\varphi(d)+(p-1)\sum_{n\mid d}\mu\biggl(\frac dn\biggr)\cdot\frac{(-nk)}{p-1}
		-(p-1)\sum_{n\mid d}\mu\biggl(\frac dn\biggr)\biggl\lfloor\frac{-nk}{p-1}\biggr\rfloor
		=(p-1)\nu_d(k),
		\end{align*}
		where by Lemma~\ref{lem:aux}
		$$
		\nu_d(k)
		:=\sum_{\substack{\ell=1\\(\ell,d)=1}}^d\nu\biggl(k,\frac\ell d(p-1)\biggr),
		$$
		reducing the character sum to
		\begin{equation}
		S_d(\omega^k)
		=(-1)^{\varphi(d)}(-p)^{\nu_d(k)}\Gamma_p\biggl(\frac k{p-1}\biggr)^{\varphi(d)}
		\prod_{n\mid d}\biggl(\Gamma_p\biggl(\biggl\{\frac{-nk}{p-1}\biggr\}\biggr)\omega^k(n^{-n})\biggr)^{\mu(d/n)}.
		\label{Sd1}
		\end{equation}
		
		For a real number $y$,  the two sets
		$$
		\bigcup_{\ell=0}^{n-1}\frac{\{ny\}+\ell}n
		=\bigcup_{\ell=0}^{n-1}\biggl(y+\frac{\ell-\lfloor ny\rfloor}n\biggr)
		\quad\text{and}\quad
		\bigcup_{\ell=0}^{n-1}\biggl(y+\frac\ell n\biggr)
		$$
		coincide modulo $\mathbb Z$. Because the first set involves $n$ numbers equally distributed in the $n$ equal subintervals of $0\le x<1$, we conclude that
		$$
		\bigcup_{\ell=0}^{n-1}\frac{\{ny\}+\ell}n
		=\bigcup_{\ell=0}^{n-1}\biggl\{y+\frac\ell n\biggr\}.
		$$
		Therefore, application of the product formula \eqref{prodG} with $x=\bigl\{\frac{-nk}{p-1}\bigr\}$ results in
		\begin{align*}
		\Gamma_p\biggl(\biggl\{\frac{-nk}{p-1}\biggr\}\biggr)\omega^k(n^{-n})
		&=\omega^{p-1}\bigl(n^{1+\lfloor-nk/(p-1)\rfloor}\bigr)
		\,\frac{\prod_{\ell=0}^{n-1}\Gamma_p\bigl(\bigl\{-\frac k{p-1}+\frac\ell n\bigr\}\bigr)}{\prod_{\ell=1}^{n-1}\Gamma_p\bigl(\frac\ell n\bigr)}
		\\
		&=\frac{\prod_{\ell=0}^{n-1}\Gamma_p\bigl(\bigl\{-\frac k{p-1}+\frac\ell n\bigr\}\bigr)}{\prod_{\ell=1}^{n-1}\Gamma_p\bigl(\frac\ell n\bigr)},
		\end{align*}
		as $\omega^{p-1}$ is the trivial character. Now, combining this formula with the combinatorial identity~\eqref{eq:comb}
		and using
		$$
		\Gamma_p\biggl(\frac k{p-1}\biggr)^{\varphi(d)}
		=\frac{(-1)^{[k/(p-1)]_0\varphi(d)}}{\Gamma_p\bigl(1-\frac k{p-1}\bigr)^{\varphi(d)}}
		=(-1)^{[-k]_0\varphi(d)}\prod_{\substack{\ell=1\\(\ell,d)=1}}^d\frac1{\Gamma_p\bigl(1-\frac k{p-1}\bigr)},
		$$
		we transform the right-hand side in \eqref{Sd1} to the desired form.
	\end{proof}
	
	With Lemma \ref{lem:Sd} at our disposal, we can give a different expression for a special case of the finite hypergeometric function,
	an expression that resembles the classical hypergeometric function \eqref{eq:HGS}.
	
	\begin{Proposition}\label{prop:3}
		Assume that the hypergeometric data consist of $\l\in\Q^\times$, multi-sets $\balpha=\{\alpha_1,\dots,\alpha_m\}$ and $\bbeta=\{1,\dots,1\}$ closed under the Galois conjugation,
		and are primitive. Then for any finite field $\F_p$ with $p$ not dividing the least common denominator of $\alpha_1,\dots,\alpha_m$,
		\begin{equation}\label{eq:H4}
		H_p(\balpha, \bbeta;\l)=\frac1{\prod_{j=1}^m\Gamma_p(\alpha_j)}\,\frac{1}{1-p}
		\sum_{k=0}^{p-2}\frac{\prod_{j=1}^m\Gamma_p\bigl(\bigl\{\alpha_j-\frac k{p-1}\bigr\}\bigr)}{\Gamma_p\bigl(1-\frac{k}{p-1}\bigr)^m}
		\,(-1)^{km}(-p)^{\nu_{\balpha}(k)}\omega^k\bigl((-1)^m\l\bigr),
		\end{equation}
		where
		$$
		\nu_{\balpha}(k):=\sum_{j=1}^m\nu(k,\alpha_j(p-1)).
		$$
	\end{Proposition}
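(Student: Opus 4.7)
The plan is to combine the two preceding results by pure substitution: Proposition~\ref{prop:2} provides $H_p(\balpha,\bbeta;\l)$ as a sum over $k$ of a product $\prod_{i=1}^t S_{d_i}(\omega^k)$ twisted by a character, and Lemma~\ref{lem:Sd} rewrites each $S_{d_i}(\omega^k)$ as a closed product of $p$-adic Gamma values. The task is therefore a reindexing exercise.

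First I would specialize Proposition~\ref{prop:2} to $q=p$, using the partition $\balpha=\bigcup_{i=1}^t\Sigma_{d_i}$ from \eqref{eq:partition}, and then substitute the formula of Lemma~\ref{lem:Sd} for each $S_{d_i}(\omega^k)$. The key observation is that, by construction of the partition, the double product running over pairs $(i,\ell)$ with $1\le i\le t$ and $(\ell,d_i)=1$ reindexes as a single product over $j=1,\dots,m$ via the bijection $(i,\ell)\mapsto j$ with $\alpha_j=\ell/d_i$. Under this identification, the Gamma values $\Gamma_p(\{\ell/d_i-k/(p-1)\})$ assemble into $\prod_{j=1}^m\Gamma_p(\{\alpha_j-k/(p-1)\})$; the factors $\Gamma_p(\ell/d_i)$ assemble into $\prod_{j=1}^m\Gamma_p(\alpha_j)$; and the exponent sum $\sum_{i,\ell}\nu(k,\ell(p-1)/d_i)$ collapses to $\nu_{\balpha}(k)$ by definition.

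Second, I would collect the remaining numerical factors using the identity $\sum_{i=1}^t\varphi(d_i)=m$, which holds because each $\Sigma_{d_i}$ has $\varphi(d_i)$ elements and $|\balpha|=m$. This makes the power of $\Gamma_p(1-k/(p-1))$ in the denominator equal to $m$, and the sign $\prod_{i=1}^t(-1)^{k\varphi(d_i)}$ equal to $(-1)^{km}$; together with the factor $(-p)^{\nu_{\balpha}(k)}$ and the twisting character, this produces the right-hand side of \eqref{eq:H4}.

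The only step requiring care is the sign bookkeeping: one must verify that the signs inherited from the $(-1)^{k\varphi(d_i)}$ prefactors of Lemma~\ref{lem:Sd}, combined with the twisting character supplied by Proposition~\ref{prop:2} and the elementary relation $\omega(-1)=-1$ valid for $p>2$, reconstitute exactly the $(-1)^{km}\omega^k((-1)^m\l)$ appearing in \eqref{eq:H4}. I do not anticipate any genuine obstacle beyond this check; every other step is a direct reindexing governed by the partition $\balpha=\bigcup_i\Sigma_{d_i}$ and the definitions of $\nu_{\balpha}(k)$ and $\varphi(d_i)$.
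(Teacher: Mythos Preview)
Your proposal is correct and follows exactly the route taken in the paper: apply Lemma~\ref{lem:Sd} to each factor $S_{d_i}(\omega^k)$, reindex the resulting double product over $(i,\ell)$ as a single product over $j=1,\dots,m$ via the partition \eqref{eq:partition}, and substitute into Proposition~\ref{prop:2}. Your explicit attention to the sign bookkeeping (using $\sum_i\varphi(d_i)=m$ and $\omega(-1)=-1$) is in fact more careful than the paper's own one-line conclusion ``the result then follows from Proposition~\ref{prop:2}.''
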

	
	\begin{proof}
		Represent $\balpha$ in the form \eqref{eq:partition} and apply Lemma \ref{lem:Sd} to get
		$$
		\prod_{i=1}^t S_{d_i}(\omega^k)
		=(-1)^{km}\prod_{j=1}^m\frac{\Gamma_p\bigl(\bigl\{\alpha_j-\frac k{p-1}\bigr\}\bigr)}
		{\Gamma_p(\alpha_j)\,\Gamma_p\bigl(1-\frac{k}{p-1}\bigr)}(-p)^{\nu(k,\alpha_j(p-1))}.
		$$
		The result then follows from Proposition~\ref{prop:2}.
	\end{proof}
	
	Remark that the factor $\prod_{j=1}^m\Gamma_p(\alpha_j)$ in \eqref{eq:H4} is a quadratic character and the formula looks particularly friendly when
	the hypergeometric data \eqref{hyp:main} are as in Theorem \ref{thm:main}:
	\begin{equation}\label{eq:Hp-pGamma}
	H_p(\balpha, \bbeta;1)=\frac{\chi_{\balpha}(p)}{1-p}
	\sum_{k=0}^{p-2}\frac{\prod_{j=1}^m\Gamma_p\bigl(\bigl\{\alpha_j-\frac k{p-1}\bigr\}\bigr)}{\Gamma_p\bigl(1-\frac{k}{p-1}\bigr)^m}
	\,(-p)^{\nu_{\balpha}(k)}.
	\end{equation}

	\section{Hypergeometric families of Calabi--Yau threefolds}\label{ss:CY}
	
	\subsection{Algebraic models of hypergeometric Calabi--Yau threefolds}\label{ss:models}
	
	Each $\balpha$ as stated in Theorem \ref{thm:main} corresponds to a one-parameter family of Calabi--Yau threefolds;
	see \cite[Table~1]{CYY} for their description and references. The construction was used in \cite{LT, BvS} for hypersurfaces (and complete intersections)
	in projective spaces and later was extended from projective spaces to weighted projective spaces \cite{Morrison, KT}.
	Here a weighted projective space is denoted by $\mathbb P^{N}(w_1,\dots,w_{N+1})$, where $N$ is the dimension and $w_i$ denotes the weight of the corresponding variable $X_i$,
	while $X(n_1,\dots,n_r)$ means the complete intersection of $r$ homogeneous polynomials of degree $n_1,\dots,n_r$, respectively.
	
	First, according to \cite{Morrison} there are four choices of $\balpha$ that correspond to degree $n=\sum_{i=1}^5 w_i$ homogeneous polynomials
	in $\mathbb P^4(w_1,\dots,w_5)$ of the form
	\begin{equation}
	V_\balpha(\psi):\quad w_1X_1^{e_1}+w_2X_2^{e_2}+w_3X_3^{e_3}+w_4X_4^{e_4}+ w_5X_5^{e_5}- n \psi X_1X_2X_3X_4 X_5=0.
	\label{Vaw}
	\end{equation}
	Compared to Table~1 of \cite{Morrison}, the weights $w_i$ are added to the terms ${X_i}^{e_i}$ so that the $\psi$-values of the desired fibers are normalized to be equal to~1.
	Each one is a one-parameter deformation of a Fermat type hypersurface $\sum_{i=1}^5 w_i{X_i}^{e_i}=0$ with a large automorphism group
	$$
	G_\balpha=\{(\zeta_{e_1}^{a_1}, \dots,\zeta_{e_5}^{a_5}):\zeta_{e_1}^{a_1} \cdots\zeta_{e_5}^{a_5}=1\}.
	$$
	For each $\psi\ne 0$, the fiber is a Calabi--Yau threefold.
	A classical paper on computing points of varieties like $V_{\balpha}(\psi)$ over finite fields is \cite{Koblitz} by Koblitz. { In \cite{Stienstra}, Stienstra associated a one-dimensional commutative formal group over $\Z$ to each $V_{\balpha}(\psi)$ with $\psi \in \Z$ in such a way that its formal logarithm 
		$$
		\sum_{n\geq 1} \frac{A_n(\psi,\balpha)}{n} \tau^n
		$$
		(like in equation (\ref{Antau}) of Section \ref{sec2.1}) satifsies, for almost all primes $p$, 
		$$
		A_p(\psi, \balpha)
			\equiv \PFQ{4}{3}{r_1,\,1-r_1,\,r_2,\,1-r_2}{\quad1,\quad1,\quad1}{\l}_{p-1} \mod p,
		$$
		where $\l$ is the parameter of the mirror family $\mathcal V_\balpha(\l)$.  It is convenient to call $A_p(\psi, \balpha)$ a generalized Hasse invariant of the variety $V_\balpha(\psi)$ at~$p$.
		A good prime $p$ is ordinary for $V_\balpha(\psi)$ when the value of the truncated hypergeometric series
		$$
		\PFQ{4}{3}{r_1,\,1-r_1,\,r_2,\,1-r_2}{\quad1,\quad1,\quad1}{\l}_{p-1}
		$$
		can be embedded into $\Z_p^\times$. 
	}
	\begin{table}[h]
		\caption{One-parameter families of hypersurfaces for $V_{\{r_1, r_2, 1-r_1,1-r_2\}}(\psi)$}
		\begin{center}
			\begin{tabular}{|c|c|c||c|c|}
				\hline
				$(d_1,\dots,d_t)$& $n$ &$(r_1,r_2)$&$X(n)\in \mathbb P^4(w_0,\dots,w_4)\vphantom{|^{0^0}}$&Calabi--Yau threefold equation\\
				\hline\hline
				$(5)$&5&$\bigl(\tfrac15,\tfrac25\bigr)$&$X(5)\subset \mathbb P^4(1,1,1,1,1)$&$\displaystyle \sum_ {j=1}^5 X_j^5-5\psi \prod_{j=1}^5 X_j=0$\\
				\hline
				$(10)$&10&$\bigl(\tfrac1{10},\tfrac3{10}\bigr)$&$X(10)\subset \mathbb P^4(1,1,1,2,5)$&$\displaystyle \sum_ {j=1}^3 X_j^{10}+2X_4^5+5X_5^2-10\psi \prod_{j=1}^5 X_j=0$\\
				\hline
				$(8)$&8&$\bigl(\tfrac18,\tfrac38\bigr)$&$X(8)\subset \mathbb P^4(1,1,1,1,4)$&$\displaystyle \sum_ {j=1}^4 X_j^8+4X_5^2-8\psi \prod_{j=1}^5 X_j=0$\\
				\hline
				$(3,6)$&6&$\bigl(\tfrac16,\tfrac13\bigr)$&$X(6)\subset \mathbb P^4(1,1,1,1,2)$&$\displaystyle \sum_ {j=1}^4 X_j^6+2X_5^3-6\psi \prod_{j=1}^5 X_j=0$\\
				\hline
			\end{tabular}
		\end{center}
		\label{tab3}
	\end{table}
	
	For other families, the defining equations are listed in Table~\ref{tab4}.
	They are computed in the following way. First, the forth column is listed in \cite{CYY} from which we know the number of equations,
	variables and the corresponding homogenous degree.
	Note that for the case $(\frac 1{12},\frac5{12})$ we use $X(12,12)$ in $\mathbb P^5(1,1,4,6,6,6)$ instead of $X(2,12)$ in $\mathbb P^5(1,1,1,1,4,6)$.
	Then we look for each equations of one-parameter deformation of a Fermat type such that under the de-homogenization\,---\,the map like
	\begin{equation}
	y_i=w_iX_i^{e_i} \;\;\text{for}\;\; i=1,\dots,5, \quad x_1=n\psi \prod_{i=1}^5 X_i, \quad \l=\psi^{-n},
	\label{qu-map}
	\end{equation}
	in the case of \eqref{Vaw}\,---\,they are sent to the mirrors whose equations can be worked out using \cite{BCM}; we outline this recipe below. Furthermore,
	we compute their generalized Hasse invariants as in \cite{Stienstra} and verify that they agree with the truncated hypergeometric functions when reduced modulo~$p$.
	
	\begin{table}[h]
		\caption{Complete intersection of one-parameter families of hypersurfaces for $V_{\{r_1, r_2, 1-r_1,1-r_2\}}(\psi)$}
		\label{tab4}
		\begin{center}
			{
				\begin{tabular}{|c|c||c|c|}
					\hline
					$(d_1,\dots,d_t)$&$(r_1,r_2)$&$X(n_1,\dots,n_r)\vphantom{|^{0^0}}$&Calabi--Yau threefold equations\\
					\hline\hline
					$(2,2,2,2)$&$\bigl(\tfrac12,\tfrac12\bigr)$&$X(2,2,2,2)\subset \mathbb P^7$&{\scriptsize$\begin{array}{c} X_1^2+X_2^2-2\psi X_3X_4=0\vphantom{|^{0^0}} \\X_3^2+X_4^2-2\psi X_5X_6=0  \\X_5^2+X_6^2-2\psi X_7X_8=0  \\X_7^2+X_8^2-2\psi X_1X_2=0  \end{array}$}\\
					\hline
					$(3,3)$&$\bigl(\tfrac13,\tfrac13\bigr)$&$X(3,3)\subset \mathbb P^5$&{\scriptsize$\begin{array}{c}X_1^3+X_2^3+X_3^3-3\psi X_4X_5X_6=0\vphantom{|^{0^0}} \\ X_4^3+X_5^3+X_6^3-3\psi X_1X_2X_3=0\end{array}$}\\
					\hline
					$(2,2,3)$&$\bigl(\tfrac12,\tfrac13\bigr)$&$X(2,2,3)\subset \mathbb P^6$&{\scriptsize$\begin{array}{c}X_1^2+X_2^2+X_3^2-3\psi X_4X_5=0\vphantom{|^{0^0}} \\ X_4^3+X_5^3-2\psi X_1X_6X_7=0 \\ X_6^2+X_7^2-2\psi X_2X_3=0\end{array}$}\\
					\hline
					$(2,2,4)$&$\bigl(\tfrac12,\tfrac14\bigr)$&$X(2,4)\subset \mathbb P^5$&{\scriptsize$\begin{array}{c}\displaystyle X_1^2+X_2^2+X_3^2+X_4^2-4\psi X_5X_6=0\vphantom{|^{0^0}} \\ \displaystyle X_5^4+X_6^4-2\psi X_1X_2X_3X_4=0\end{array}$}\\
					\hline
					$(12)$&$\bigl(\tfrac1{12},\tfrac5{12}\bigr)$&$X(12,12)\subset \mathbb P^5(1,1,4,6,6,6)$&{\scriptsize$\begin{array}{c}\displaystyle X_1^{12}+X_2^{12}-2\psi X_5X_6=0\vphantom{|^{0^0}} \\ \displaystyle X_5^2+X_6^2+4X_3^3+6X_4^2-12\psi X_1X_2X_3X_4=0\end{array}$}\\
					\hline
					$(4,4)$&$\bigl(\tfrac14,\tfrac14\bigr)$&$X(4,4)\subset \mathbb P^5(1,1,2,1,1,2)$&{\scriptsize$\begin{array}{c}\displaystyle X_1^4+X_2^4+2X_3^2-4\psi X_4X_5X_6=0\vphantom{|^{0^0}} \\ \displaystyle X_4^4+X_5^4+2X_6^2-4\psi X_1X_2X_3=0\end{array}$}\\
					\hline
					$(4,6)$&$\bigl(\tfrac14,\tfrac16\bigr)$&$X(4,6)\subset \mathbb P^5(1,1,2,1,2,3)$&{\scriptsize$\begin{array}{c}\displaystyle X_1^4+X_2^4+2X_3^2+2X_5^2-6\psi X_4X_6=0\vphantom{|^{0^0}} \\ \displaystyle X_4^6+3X_6^2-4\psi X_1X_2X_3X_5=0\end{array}$}\\
					\hline
					$(3,4)$&$\bigl(\tfrac13,\tfrac14\bigr)$&$X(3,4)\subset \mathbb P^5(1,1,1,1,1,2)$&{\scriptsize$\begin{array}{c}\displaystyle X_1^3+X_2^3+X_3^3+X_4^3-4\psi X_5X_6=0\vphantom{|^{0^0}} \\ \displaystyle X_5^4+2X_6^2-3\psi X_1X_2X_3X_4=0\end{array}$}\\
					\hline
					$(6,6)$&$\bigl(\tfrac16,\tfrac16\bigr)$&$X(6,6)\subset \mathbb P^5(1,2,3,1,2,3)$&{\scriptsize$\begin{array}{c}\displaystyle X_1^6+2X_2^3+3X_3^2-6\psi X_4X_5X_6=0\vphantom{|^{0^0}} \\ \displaystyle X_4^6+2X_5^3+3X_6^2-6\psi X_1X_2X_3=0\end{array}$}\\
					\hline
					$(2,2,6)$&$\bigl(\tfrac12,\tfrac16\bigr)$&$X(2,6)\subset \mathbb P^5(1,1,1,1,1,3)$&{\scriptsize$\begin{array}{c}\displaystyle  3 X_1^2+X_2^2+X_3^2+X_4^2-6\psi X_1X_5=0\vphantom{|^{0^0}} \\ \displaystyle X_5^6+X_6^2-2\psi X_2X_3X_4X_6=0\end{array}$}\\
					\hline
			\end{tabular}}
		\end{center}
	\end{table}
	
	There are different ways, such as the Griffiths--Dwork method, to compute the Picard--Fuchs equation of such a family $V_{\balpha}(\psi)$.
	A detailed computation was given in \cite{Morrison} by Morrison; the task can be alternatively done with the technology of GKZ hypergeometric functions~\cite{GKZ}. 
	For a discussion of computation using period integrals see the recent work \cite{HLYY} by Huang, Lian, Yau and Yu.
	
	One way to obtain the mirror of $V_\balpha(\psi)$ is via the orbifold $V_\balpha(\psi)/G_\balpha$ as described in Section~\ref{sec1}
	for the quintic case: one uses the map \eqref{qu-map}, which sends $V_\balpha(\psi)$ to
	$$
	\hat{\mathcal V}_{\balpha}(\l):\quad \sum_{i=1}^5 y_i=x_1, \quad M_n^{-1}\l x_1^{n}=\prod_{i=1}^5 y_i^{w_i}.
	$$
	Such maps can be given for other hypergeometric cases and a general recipe, related to the discussion in \cite[Section~6]{BCM}, is as follows.
	Using the partitioning \eqref{eq:partition} and Table \ref{tab2} write
	$$
	\frac{\prod_{j=1}^4(x-e^{2\pi ir_j})}{(x-1)^4}
	=\frac{(x^{p_1}-1)\cdots (x^{p_r}-1)}{(x^{q_1}-1)\cdots (x^{q_s}-1)}
	$$
	so that $\{p_i\}$ and $\{q_j\}$ are disjoint multi-sets of positive integers.
	Set $\l=\psi^{-(p_1+\dots+p_r)}$ and $(a_1,\dots,a_k)=(p_1,\dots,p_r,-q_1,\dots,-q_s)$, where $k=r+s$.
	For example, $(a_i)_{i=1,\dots,6}=(6,-1,-1,-1,-1,-2)$ when $\balpha=\{\frac 13, \frac 23,\frac 16,\frac 56 \}$.
	Next fragmentate the index set $\{1,2,\dots,k\}$ of $(a_1,a_2,\dots, a_k)$ into a disjoint union of subsets
	$K_1,\dots,K_m$ such that $\sum_{i\in K_j}a_i=0$ for each $j=1,\dots,m$ and each part $(a_i)_{i\in K_j}$ cannot be fragmented likewise further;
	we call such fragmentations of the index set minimal.
	In the example above, the index set $\{1,2,\dots,6\}$ is already a minimal fragmentation,
	while the index set $\{1,\dots,8\}$ of $(6,4,-1,-3,-1,-1,-2,-2)$ (corresponding to $\balpha=\{\frac 14, \frac 34,\frac 16,\frac 56 \}$)
	can be fragmented into $K_1=\{1,3,4,7\}$ and $K_2=\{2,5,6,8\}$ (this fragmentation is minimal but not unique\,---\,another minimal fragmentation
	can be found in the related row of Table~\ref{table: tab5}).
	Then there is a natural morphism from the corresponding homogeneous polynomial(s) in Tables~\ref{tab3} and \ref{tab4} to
	$$
	{W}_{\balpha}(\l): \quad \sum_{i\in K_1}x_i=\dots=\sum_{i\in K_m}x_i=0, \quad \l M^{-1}x_1^{a_1}\cdots x_k^{a_k}=(-1)^{q_1+\dots +q_j},
	$$
	where $M=M_{d_1}\dotsb M_{d_t}$. 
	Using the description of subsets $K_1,\cdots,K_m$ in Table \ref{table: tab5}, 
	the latter equation is the product of the $m$~equations 
	$$\psi^{-p_i}M_{p_i}^{-1} \prod_{j\in K_i}x_j^{a_j} =\prod_{g_j\in K_i}(-1)^{q_j}, \quad\text{where}\; i=1,\dots,m.$$ With $\l =\psi^{-(p_1+\cdots + p_r)}$, the mirror family is then given by the following set of $2m$ equations:
	$$
	\hat{\mathcal V}_{\balpha}(\l): \quad \sum_{j\in K_i}x_j=0, \quad \psi^{-p_i}M_{p_i}^{-1} \prod_{j\in K_i}x_j^{a_j} =\prod_{g_j\in K_i}(-1)^{q_j}, \quad\text{where}\; i=1,\dots,m.
	$$   We record the information for each minor family in Table~\ref{table: tab5};
	they are not unique\,---\,some instances differ from the descriptions in other sources, for example, in~\cite{Doran}. 
	
	In the literature, there are many papers on counting the number of solutions of these algebraic equations of type \eqref{Vaw} over finite fields; see, e.g.,
	\cite{DKSSVW, Kadir, Kadir-Yui, Perunicic, Wan}.
	In particular, it is shown in \cite{Wan} that $V_\balpha(\psi)$ and its mirror
	$\hat{\mathcal V}_\balpha(\l)$ share the same unit root.

	\begin{sidewaystable}
		\vskip160mm
		\caption{Mirror families $\hat{\mathcal V}_{\{r_1,r_2,1-r_1,1-r_2\}}(\l)$}
		\label{table: tab5}
		\begin{center}
			{
				\begin{tabular}{|c||c|c|c|c|}
					\hline
					$(r_1,r_2)$&$M$&$(a_1,\dots,a_k)$&fragmentation&map $V_{\balpha}(\psi)\to\mathcal V_{\balpha}(\l) \vphantom{|^{0^0}}$\\
					\hline\hline
					$\bigl(\tfrac15,\tfrac25\bigr)$& $5^5$ &{\scriptsize$( 5,-1,-1,-1,-1,-1)$}&{\scriptsize$\begin{array}{c}\{ 1,2,3,4,5,6\}\end{array}$}&
					{\scriptsize$\begin{array}{c}\vphantom{\Big|^{0^0}} x_1=-5\psi X_1\dotsb X_5,\; x_{i+1}=X_i^5\;\text{for}\; i=1,2,3,4,5,\;  \l=\psi^{-5}\end{array}$} \\
					\hline
					$\bigl(\tfrac1{10},\tfrac3{10}\bigr)$& $2^8 \cdot 5^5$ &{\scriptsize$( 10,-1,-1,-1,-2,-5)$}&{\scriptsize$\begin{array}{c}\{ 1,2,3,4,5,6 \}\end{array}$}&
					{\scriptsize$\begin{array}{c}\vphantom{|^{0^0}} x_1=-10\psi X_1\dotsb X_5,\; x_{i+1}=X_i^{10}\;\text{for}\; i=1,2,3,\\ x_5=2X_4^5,\; x_6=5X_5^2,\; \l=\psi^{-10}\end{array}$} \\
					\hline
					$\bigl(\tfrac18,\tfrac38\bigr)$& $2^{16}$ &{\scriptsize$( 8,-1,-1,-1,-1,-4)$}&{\scriptsize$\begin{array}{c}\{ 1,2,3,4,5,6 \}\end{array}$}&
					{\scriptsize$\begin{array}{c}\vphantom{\Big|^{0^0}} x_1=-8\psi X_1\dotsb X_5,\; x_{i+1}=X_i^{8}\;\text{for}\; i=1,2,3,4,\; x_6=4X_5^2,\; \l=\psi^{-8}\end{array}$} \\
					\hline
					$\bigl(\tfrac16,\tfrac13\bigr)$& $2^4\cdot 3^6$ &{\scriptsize$(6,-1,-1,-1,-1,-2)$}&{\scriptsize$\{1,2,3,4,5,6\}$}&
					{\scriptsize$\begin{array}{c}\vphantom{\Big|^{0^0}} x_1=-6\psi X_1\dotsb X_5,\; x_{i+1}=X_i^{6}\;\text{for}\; i=1,2,3,4,\; x_6=2X_5^3,\; \l=\psi^{-6}\end{array}$} \\
					\hline\hline
					$\bigl(\tfrac12,\tfrac12\bigr)$& $2^8$ &{\scriptsize$(2,2,2,2,-1,-1,-1,-1,-1,-1,-1,-1)$}&{\scriptsize$\begin{array}{c}\{1,5,6\}, \; \{2,7,8\}, \\ \{3,9,10\}, \; \{4,11,12\}\end{array}$}&
					{\scriptsize$\begin{array}{c}\vphantom{|^{0^0}} x_1=-2\psi X_3X_4,\; x_2=-2\psi X_5X_6,\; x_3=-2\psi X_7X_8,\\ x_4=-2\psi X_1X_2,\; x_{4+i}=X_i^2\;\text{for}\; i=1,\dots,8,\; \l=\psi^{-8}\end{array}$} \\
					\hline
					$\bigl(\tfrac13,\tfrac13\bigr)$& $3^6$ &{\scriptsize$(3,3,-1,-1,-1,-1,-1,-1)$}&{\scriptsize$\begin{array}{c}\{1,3,4,5\}, \\ \{2,6,7,8\}\end{array}$}&
					{\scriptsize$\begin{array}{c}\vphantom{|^{0^0}} x_1=-3\psi X_3X_4X_6,\; x_2=-3\psi X_1X_2X_3,\\ x_{2+i}=X_i^3\;\text{for}\; i=1,\dots,6,\; \l=\psi^{-6}\end{array}$} \\
					\hline
					$\bigl(\tfrac12,\tfrac13\bigr)$& $2^4\cdot 3^3$ &{\scriptsize$(3,2,2,-1,-1,-1,-1,-1,-1,-1)$}&{\scriptsize$\begin{array}{c}\{1,4,5,6\}, \\ \{2,7,8\}, \; \{3,9,10\}\end{array}$}&
					{\scriptsize$\begin{array}{c}\vphantom{|^{0^0}} x_1=-3\psi X_4X_5,\; x_2=-2\psi X_1X_6X_7,\; x_3=-2\psi X_2X_3,\\ x_{i+3}=X_{i}^3\;\text{for}\; i=1,2,3,4,5,\; x_9=X_6^2,\; x_{10}=X_7^2,\; \l=\psi^{-7}\end{array}$} \\
					\hline
					$\bigl(\tfrac12,\tfrac14\bigr)$& $2^{10}$ &{\scriptsize$(4,2,-1,-1,-1,-1,-1,-1)$}&{\scriptsize$\begin{array}{c}\{1,3,4,5,6\}, \\ \{2,7,8\}\end{array}$}&
					{\scriptsize$\begin{array}{c}\vphantom{|^{0^0}} x_1=-4\psi X_5X_6,\; x_2=-2\psi X_1X_2X_3X_4,\\ x_{i+2}=X_{i}^2\;\text{for}\; i=1,2,3,4,\; x_7=X_5^4,\; x_8=X_6^4,\; \l=\psi^{-6}\end{array}$} \\
					\hline
					$\bigl(\tfrac1{12},\tfrac5{12}\bigr)$& $2^{12}\cdot 3^6$ &{\scriptsize$(12,2,-1,-1,-4,-6,-1,-1)$}&{\scriptsize$\begin{array}{c}\{1,3,4,5,6\}, \\ \{2,7,8\}\end{array}$}&
					{\scriptsize$\begin{array}{c}\vphantom{|^{0^0}} x_1=-12\psi X_1X_2X_3X_4,\; x_2=-2\psi X_5X_6,\; x_3=X_1^{12},\; x_4=X_2^{12},\\ x_5=X_5^2,\; x_6=X_6^2,\; x_7=4X_3^3,\; x_8=6X_6^2,\; \l=\psi^{ -14}\end{array}$} \\
					\hline
					$\bigl(\tfrac14,\tfrac14\bigr)$& $2^{12}$ &{\scriptsize$(4,4,-1,-1,-2,-1,-1,-2)$}&{\scriptsize$\begin{array}{c}\{1,3,4,5\}, \\ \{2,6,7,8\}\end{array}$}&
					{\scriptsize$\begin{array}{c}\vphantom{|^{0^0}} x_1=-4\psi X_4X_5X_6,\; x_2=-4\psi X_1X_2X_3,\\ x_{i+2}=X_{i}^4\;\text{for}\; i=1,2,4,5,\; x_5=2X_3^2,\; x_{8}=2X_6^2,\; \l=\psi^{-8}\end{array}$} \\
					\hline
					$\bigl(\tfrac14,\tfrac16\bigr)$& $2^{10}\cdot 3^3$ &{\scriptsize$(6,4,-1,-3,-1,-1,-2,-2)$}&{\scriptsize$\begin{array}{c}\{1,5,6,7,8\}, \\ \{2,3,4\}\end{array}$}&
					{\scriptsize$\begin{array}{c}\vphantom{|^{0^0}} x_1=-6\psi X_4X_6,\; x_2=-4\psi X_1X_2X_3X_5,\; x_3=X_4^6,\; x_4=3X_6^2,\\ x_5=X_1^4,\; x_6=X_2^4,\; x_7=2X_3^2,\; x_8=2X_5^2,\; \l=\psi^{-10}\end{array}$} \\
					\hline
					$\bigl(\tfrac13,\tfrac14\bigr)$& $2^6\cdot 3^3$ &{\scriptsize$(4,3,-1,-1,-1,-1,-1,-2)$}&{\scriptsize$\begin{array}{c}\{1,3,4,5,6\}, \\ \{2,7,8\}\end{array}$}&
					{\scriptsize$\begin{array}{c}\vphantom{|^{0^0}} x_1=-4\psi X_5X_6,\; x_2=-3\psi X_1X_2X_3X_4,\\ x_{i+2}=X_i^3\;\text{for}\; i=1,2,3,4,\; x_7=x_5^4,\; x_8=2X_6^2,\; \l=\psi^{-7}\end{array}$} \\
					\hline
					$\bigl(\tfrac16,\tfrac16\bigr)$& $2^8\cdot 3^6$ &{\scriptsize$(6,6,-1,-2,-3,-1,-2,-3)$}&{\scriptsize$\begin{array}{c}\{1,3,4,5\}, \\ \{2,6,7,8\}\end{array}$}&
					{\scriptsize$\begin{array}{c}\vphantom{|^{0^0}} x_1=-6\psi X_4 X_5X_6,\; x_2=-6\psi X_1X_2X_3,\; x_3=X_1^6,\; x_4=2X_2^3,\\ x_5=3X_3^2,\; x_6=X_4^6,\; x_7=2X_5^3,\; x_8=3X_6^2,\; \l=\psi^{-12}\end{array}$} \\
					\hline
					$\bigl(\tfrac12,\tfrac16\bigr)$& $2^8\cdot 3^3$ &{\scriptsize$(6,2,-1,-1,-1,-1,-1,-3)$}&{\scriptsize$\begin{array}{c}\{1,5,6,7,8\}, \\ \{2,3,4\}\end{array}$}&
					{\scriptsize$\begin{array}{c}\vphantom{|^{0^0}} x_1=-6\psi X_1X_5,\; x_2=-2\psi X_2X_3X_4X_6,\\ x_{i+2}=X_i^6\;\text{for}\; i=1,2,3,4,\; x_{7}=X_5^6,\; x_{8}=3X_6^2,\; \l=\psi^{-8}\end{array}$} \\
					\hline
					
			\end{tabular}}
		\end{center}
		
	\end{sidewaystable}
	
	\subsection{Modularity}
	\label{ss:Modularity}
	For each of $\hat{\mathcal V}_{\balpha}(\l)$, the smooth model of the fiber
	at $\l=1$ corresponds to a Calabi--Yau threefold
	$\overline{\hat{\mathcal V}_{\balpha}(1)}$ defined over~$\Q$.
	
	We first state a theorem due to Katz which is relevant to our discussion.
	
	\begin{Theorem}[Katz]
		\label{th:Katz}
		Let $r_1,r_2$ be as above, $N$ be the least positive common denominators of $r_1,r_2$, and $\balpha=\{r_1,r_2,1-r_1,1-r_2\}$, $\bbeta=\{1,1,1,1\}$. Assume that $\ell$ is a fixed prime dividing $N$ and $\l \in \Z[1/N]$, $\l\ne0$. Take $K=\Q(\zeta_N)$. Then there is a continuous representation $\rho_{\balpha, \ell,\l}$ of $G_K$, of degree~$4$ when $\l\ne1$ and of degree~$3$ when $\l=1$, such that at each rational prime $p$ which splits completely in $\Z[\zeta_N]$, 
		\begin{equation}
		\operatorname{Tr} \rho_{\balpha, \ell,\l}(\Frob_p)=H_p(\balpha,\bbeta;\l).
		\end{equation}
		Moreover, at $\l=1$,  the representation $\rho_{\balpha, \ell,1}$ decomposes into a direct sum of $2$-dimensional and $1$-dimensional representations. 
	\end{Theorem}
	
	\begin{proof}
		Let $\balpha,\bbeta$ be as above, and let $k=\F_q$  be a finite field such that $q\equiv 1\mod N$,  $\psi$ a fixed nontrivial additive character of $k$. Let $k^{\text{sep}}$ denote a separable closure of~$k$. Katz constructed in \cite[Section~8.2]{Katz} an $\ell$-adic sheaf  $\mathcal F:=\mathcal H(\,!\,,\psi,\balpha,\bbeta)$  of $\mathbb G_m/k$, which is lisse on $\mathbb G_m-\{1\}$ of rank~4  and is self-dual, as both $\balpha,\bbeta$ are defined over $\Q$. \bk 
		
		At any geometric point $\bar \l $ of $\mathbb G_m(k^{\text{sep}})$ lying over $\l $, the stalk $\mathcal F_{\bar \l}$ of $\mathcal F$ gives rise to a generically degree~4 representation of $$\rho_{ \l,\ell,\F_q}\colon \text{Gal}(k^{\text{sep}}/k)\to GL(\mathcal F_{\bar \l}),$$ see \cite[Section  7.3.7]{Katz}. 	In particular, the  trace of the geometric Frobenius element  of $\text{Gal}(k^{\text{sep}}/k)$ acting on the representation space $\mathcal F_{\bar \l}$ agrees with $-H_q(\balpha,\bbeta;\l)^{K}$ which is recalled in \eqref{eq:Katz}, see  \cite[Section 8.2]{Katz}. These representations are compatible when one considers finite extensions $E$ of~$\F_q$.  They also  form a compatible family when $\l$ varies over $\mathbb A^1/\overline k$, where $\overline k$ is an algebraic closure of $k$. The function field of the algebraic variety $\mathbb A^1/\overline k$ is $\overline k(x)$ with an indeterminant $x$. Hence each stalk also admits the action of $\operatorname{Gal}\left (\overline k(x)^{\text{sep}}/\overline k(x)\right)$, which describes the coverings of $\mathbb A^1$ as a curve defined over $\overline k$.
		Each closed point $x$ of $\mathbb A^1$ can be viewed as a discrete valuation of $\overline k(x)/\overline k$. Fix a place $\tilde x$ of $\overline k(x)^{\text{sep}}$ lying over it and denote by $I(x)$ the inertia group of $\operatorname{Gal}\left (\overline k(x)^{\text{sep}}/\overline k(x)\right)$ at~$\tilde x$. 
		For each of the 14 cases, $\Lambda$ the product of the characters corresponding to upper and  lower parameters is trivial (notation as \eqref{eq:H}; this means that $\Lambda=\prod_{j=1}^4 \omega^{(q-1)\alpha_j}\prod_{j=1}^4 \omega^{-(q-1)\beta_j}$ is the trivial character).   From Theorem~8.4.2 in~\cite{Katz} and its proof we know that the inertia group $I(1)$  acts on $\mathcal F$  by a tame pseudoreflection (namely, its space of invariants has codimension~$1$, see \cite[Section 7.6]{Katz}). Consequently, the rank of $\mathcal F_{\bar 1}$, as a vector space over $\overline \Q_\ell$, has a $1$~dimension drop (see the proof of Theorem 8.4.11\,(3) in~\cite{Katz}), meaning that the rank becomes~$3$.  Moreover, from $\Lambda$ being trivial we know that the pseudoreflection at~1 is unipotent.  By the Parity Recognition Theorem 8.8.2 \cite[Theorem 8.8.2]{Katz}, the auto-duality pairing of $\mathcal F$ on the open subset of  $\mathbb G_m-\{1\}$ has to be alternating. Therefore, a nontrivial subspace of $\mathcal F_{\bar 1}$  also admits an alternating pairing, requiring the dimension of this subspace to be even, which can only be~2 in this case. \bk
		Thus, $\rho_{ 1,\ell,\F_q}$ has a decomposition into one $1$-dimensional and one $2$-dimensional representations of $\operatorname{Gal}(\F_q^{\text{sep}}/\F_q)$.  The subspace admitting alternating  pairing  has  pure weight $4+4-1$ (see Section~7.3.7 in~\cite{Katz} for the definition of weight and Theorem~8.4.2 for the claim).

		Later Katz modified $\mathcal H(\,!\,,\psi,\balpha,\bbeta)$ by a ``canonical'' twist\,---\,see \cite[Section~4]{Katz09}\,---\,to get  $\mathcal F^{\text{can}}:={\mathcal H}^{\text{can}}(\balpha;\bbeta)$. The trace of geometric Frobenius of $\F_q^{\text{sep}}/\F_q$ is given by $H_q(\balpha,\bbeta;\l)$. Due to the canonical twisting, the 2-dimensional subspace has pure weight~3 (see~\eqref{eq:weight} for the weight dropping by~4).
		The canonical twist removes the dependence on the additive character  $\psi$ of $\F_q$ and hence allows the canonical sheaf $\mathcal F^{\text{can}}$ to be lifted  from $\mathbb G_m$ over finite fields to over $\Z[\zeta_N]$; for the discussion on the inertia group $I(1)$, $\overline k$ can be lifted to an algebraic closure of $K=\Q(\zeta_N)$, while $I(1)$ lifted to the inertia group of $\operatorname{Gal}(\overline K(x)^{\text{sep}}/\overline K(x))$ at~1. 
		Consequently, the stalk of $\mathcal F^{\text{can}} $ at $\l$ gives rise to a generically 4-dimensional representation $\rho_{\balpha,\ell,\l}$ of~$G_K$. At $\l=1$, it is 3-dimensional  as the local monodromy at~$1$ is a unipotent pseudoreflection, which also forces the self-paring on a generic stalk to be alternating. Hence \bk  $\rho_{\balpha,\ell,1}$  decomposes as $\rho_{\balpha,\ell,1}^{(1)}\oplus \rho_{\balpha,\ell,1}^{(2)}$ of dimension~1 and~2, respectively, where the representation space of the latter  has pure weight~3.
	\end{proof}
	
	\begin{proof}[Proof of Theorem~\textup{\ref{thm:level}}]
		Let $\l=1$.
		For the hypergeometric data \eqref{hyp:main} we perform the partitioning \eqref{eq:partition}
		and introduce the corresponding cyclotomic extension $K=\Q(e^{2\pi i/d_1},\dots,e^{2\pi i/d_t})$.
		For each fixed prime $\ell$,  and $\l \in \Z[1/(\ell d_1 \cdots d_t)]$, we know from Theorem~\ref{th:Katz} that there is an $\ell$-adic Galois representation $\rho_{\balpha,\ell}:=\rho_{\balpha,\ell,1}$ of $G_K:=\operatorname{Gal}(\overline{\Q}/K)$ such that
		if $p\equiv 1\pmod{d_j}$ for $j=1,\dots, t$, then
		\begin{equation}\label{eq:HGM}
		H_p(\balpha,\bbeta;1)=\operatorname{Tr}\rho_{\balpha,\ell}(\Frob_p).
		\end{equation}
		Here $\Frob_p$ is the geometric Frobenius element at unramified prime $p$ (which splits completely in~$K$); in particular,
		\begin{equation}\label{eq:H2}
		H_q(\balpha,\bbeta;1)=\frac1{1-q}\sum_{\chi\in \widehat{\F_q^\times}}\prod_{j=1}^tS_{d_j}(\chi)
		\end{equation}
		as in \eqref{eq:H3} with $\omega^k=\chi$.

		Since the hypergeometric motives are defined over $\Q$, the Galois representation $\rho_{\balpha,\ell}$ can be extended to representations of the absolute
		Galois group $G_{\Q}$.%
		\footnote{Assume that $H$  is a finite index subgroup of a group $G$.  Up to semi-simplification, a representation $\rho$ of $H$  can be extended to a (non-unique) representation of $G$
			if and only if $\rho\sim \rho^g$ (its conjugate by $g$)  for all $g$ in $G/H$.}
		These extensions are not unique, but their irreducible components are only differed by finite order characters of $G_{\Q}$ which fix $G_K$.  \bk
		In \cite{BCM},
		Beukers, Cohen, and Mellit gave such an explicit extension, which is still denoted by $\rho_{\balpha,\ell}$, via Gauss sum
		properties and  a geometric realization using toric varieties compatible with Katz's formulation, which are $\hat{\mathcal V}_{\balpha}(1)$ in our cases. 
		The extended expression \eqref{eq:H2}  applies to almost all $\F_p$'s.

		From the established modularity lifting theorems  we know that the extension $\rho_{\balpha,\ell}^{(2)}$ to $G_\Q$ is modular.  For example, Theorem 2.1.4 of \cite{ALLL} states that given a prime $\ell$ and a $2$-dimensional absolutely irreducible representation of $G_\Q$ over $\overline \Q_\ell$ that is odd, unramified at almost all primes, and its restriction to the decomposition group $D_\ell$ at $\ell$ is crystalline with Hodge--Tate weight $\{0,r\}$, where $1\le r\le \ell-2$ and $\ell+1\nmid 2r$, then $\rho$ is modular and corresponds to a weight $r+1$ Hecke eigenform.  For our cases, we pick $\ell$ to be any prime larger than~$5$ and congruent to~$1$ modulo $N$. From Katz's construction, $\rho_{\balpha,\ell}^{(2)}$ is unramified almost everywhere, odd and absolutely irreducible due to the alternating pairing which is Galois invariant, and its restriction to $D_\ell$ is crystalline  with Hodge--Tate weight $\{0,3\}$.
		\bk This means there exists a weight~$4$ normalized Hecke eigenform $f_{\balpha}$ such that $\rho_{\balpha,\ell}^{(2)}$
		is isomorphic to the Deligne representation of $G_{\Q}$ associated with $f_{\balpha}$.  To determine the $f_{\balpha}$ individually, we compute the traces of $\rho_{\balpha,\ell}^{(2)}(\Frob_p)$, using the hypergeometric motives routines implemented 
		in \texttt{Magma} by Watkins via  $H_p(\balpha,\bbeta;1)$ defined by \eqref{eq:H2} (see \cite{Watkins} and \cite{RVRW}).
		As each $\rho_{\balpha,\ell}^{(2)}$ is unramifed outside the set $\{2,3,5,\ell\}$,   we use a result of Serre (Theorem~2.2 in \cite{Dieu-level}), which asserts that the $p$-exponents of the level
		are bounded by $8$ for $p=2$, by $5$ for $p=3$, and by $2$ for all other bad primes. \cy This theorem reduces our search to a finite list. We used \texttt{Magma} to compute the first few coefficients of all Hecke eigenforms of level either dividing $2^8\cdot 3^5$ or $2^8\cdot 5^2$, they are available at \href{https://sites.google.com/view/ft-tu/research/database?authuser=0}{\uline{the database}}.
		For the \texttt{Sagemath} program we used to identify the target modular forms, see the online \href{https://cocalc.com/share/public_paths/024b5733fc7476b0b78e27c3a82ef9351b4d03c0}{\uline{\text{Cocalc}}} file.
			This allowed us to confirm the levels of $f_{\balpha}$ listed in Table~\ref{tab1}.   Consequently, 	the extension of  $\rho_{\balpha,\ell}^{(1)}$ can be determined as well.  \cy For a closely related discussion, see a recent paper \cite{LLT} by Li, Long and Tu.\bk

			For example, when $(r_1,r_2)=(\frac 13,\frac 13)$, the level of $f_{\balpha}$ can only be 3, 9, 27, 81, or 243.
			Computing in \texttt{Magma} all weight~4 Hecke eigenforms with integer coefficients of such levels
			and comparing them with the explicit values of $H_p(\balpha,\bbeta;1)$ at $p=5$ and $7$ already identify $f_{\balpha}$
			as Entry 27.4.1.a in the database~\cite{LMFDB}.
			Other cases are verified in a similar fashion. 
		\end{proof}\bk
		
		We end this section with the following remark.
		As mentioned by Rodriguez-Villegas in \cite{RV}, the fiber at $\l=1$ for each of the fourteen hypergeometric families
		is a rigid Calabi--Yau threefold defined over $\Q$, and the modularity of rigid Calabi--Yau threefolds over $\Q$ is covered by the following theorem
		(see also \cite{Yui} for more background).
		
		\begin{Theorem}[Dieulefait \cite{Dieu},  Gouv\^ea and Yui \cite{GY}]\label{thm:mod}
			For each prime $\ell$, there is a weight $4$ modular form $f_{\balpha}$ with integer coefficient such that
			the $\ell$-adic Galois representation arising from the third \'etale cohomology
			group of $\hat{\mathcal V}_{\balpha}(1)$ is isomorphic
			to the $\ell$-adic Deligne representation associated to~$f_{\balpha}$.
		\end{Theorem}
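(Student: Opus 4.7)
The plan is to deduce Theorem~\ref{thm:mod} from the cited works of Dieulefait \cite{Dieu} and Gouv\^ea--Yui \cite{GY} as a black box, after verifying that each member of our list of fourteen fibers satisfies the hypotheses of the modularity theorem for rigid Calabi--Yau threefolds over $\mathbb Q$. Concretely, for each datum $\balpha=\{r_1,1-r_1,r_2,1-r_2\}$ one must check that (i) the smooth model $\overline{\mathcal V_\balpha(1)}$ can be realized over $\mathbb Q$, and (ii) it is rigid, i.e.\ $h^{2,1}(\overline{\mathcal V_\balpha(1)})=0$, so that $B_3=\dim H^3(\overline{\mathcal V_\balpha(1)},\mathbb C)=2$.

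First I would carry out (i). The explicit mirror maps and minimal fragmentations recorded in Table~\ref{tab5} give $\mathcal V_\balpha(\lambda)$ by equations with rational coefficients. Setting $\lambda=1$ yields $\mathcal V_\balpha(1)$ over $\mathbb Q$; its singular locus is described combinatorially from the toric/orbifold data, and a crepant resolution $\overline{\mathcal V_\balpha(1)}\to\mathcal V_\balpha(1)$ can be chosen to be Galois-equivariant and therefore defined over $\mathbb Q$. Next, for (ii) I would appeal to the fact that the Picard--Fuchs operator attached to $\mathcal V_\balpha(\lambda)$ is the order-$4$ hypergeometric differential operator with parameters $\balpha$ and $\bbeta=\{1,1,1,1\}$; because this operator is irreducible with three regular singularities and generic monodromy of symplectic type, the generic Hodge number $h^{2,1}(\mathcal V_\balpha(\lambda))=1$, and the specialization $\lambda=1$ is a singular conifold-type fiber whose crepant resolution collapses the $H^{2,1}$-piece to zero. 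This rigidity was observed by Rodriguez-Villegas in \cite{RV}, so I would simply quote that along with the references in Section~\ref{ss:CY}.

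Granting (i) and (ii), the $\ell$-adic representation $\rho_{\balpha,\ell}^{\mathrm{et}}$ of $G_\mathbb Q$ on $H^3_{\mathrm{et}}(\overline{\mathcal V_\balpha(1)},\mathbb Q_\ell)$ is two-dimensional, unramified outside the primes of bad reduction together with $\ell$, and has Hodge--Tate weights $\{0,3\}$ reflecting $H^{3,0}\oplus H^{0,3}$. The modularity theorem of Dieulefait \cite{Dieu} (in combination with the proof of Serre's conjecture by Khare--Wintenberger) and Gouv\^ea--Yui \cite{GY} then asserts that any such compatible system arising from a rigid Calabi--Yau threefold over $\mathbb Q$ is isomorphic to the Deligne representation of a weight-$4$ normalized Hecke eigenform $f_{\balpha}$ with integer Fourier coefficients. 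Plugging this in gives the assertion of the theorem.

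The main obstacle I expect is not the invocation of modularity itself, but the verification of rigidity (ii) for the complete-intersection cases of Table~\ref{tab4}, where the orbifold singularities are more intricate and one must carefully track the contribution to $h^{2,1}$ of the exceptional divisors introduced by the crepant resolution. A convenient shortcut is to reuse the hypergeometric-motive analysis already performed in the proof of Theorem~\ref{thm:level}: there the decomposition $\rho_{\balpha,\ell}=\rho_{\balpha,\ell}^{(2)}\oplus\rho_{\balpha,\ell}^{(1)}$ into pieces of weight $p^{3/2}$ and $p$ is established motivically, and the degree-$2$ piece $\rho_{\balpha,\ell}^{(2)}$ is shown to coincide with $H^3_{\mathrm{et}}(\overline{\mathcal V_\balpha(1)},\mathbb Q_\ell)$ via \cite{BCM}. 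In that way the rigidity and modularity of $\overline{\mathcal V_\balpha(1)}$ can both be read off from the hypergeometric side, bypassing a direct Hodge-theoretic computation.
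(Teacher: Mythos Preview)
Your overall structure is correct and matches the paper's: Theorem~\ref{thm:mod} is stated as a cited result of Dieulefait and Gouv\^ea--Yui, so the only work is to verify that each $\overline{\mathcal V_\balpha(1)}$ is a rigid Calabi--Yau threefold defined over~$\mathbb Q$, after which the modularity theorem applies as a black box.

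Where you diverge from the paper is in the verification of rigidity. Your primary suggestion is Hodge-theoretic: argue that the generic $h^{2,1}$ equals~$1$ from the order-$4$ Picard--Fuchs operator, and that the conifold specialization at $\lambda=1$ kills this piece under crepant resolution. As you yourself note, making this precise for the complete-intersection cases in Table~\ref{tab4} requires tracking the exceptional contributions carefully, and the paper does not go this route. Instead, following the idea in \cite[Appendix]{WvG}, the paper uses the point-counting formula of \cite[Theorem~6.1]{BCM},
\[
\#\overline{\mathcal V_\balpha(1)}(\mathbb F_q)=-H_q(\balpha,\bbeta;1)+F_\balpha(q)
\]
with $F_\balpha(q)\in\mathbb Z[q]$, and then reads off $h^{2,1}=0$ from the Weil conjectures: the shape of the local zeta function forces the degree of the weight-$3$ factor to be exactly~$2$. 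This is cleaner and uniform across all fourteen cases, avoiding any resolution bookkeeping.

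Your proposed shortcut via the decomposition $\rho_{\balpha,\ell}=\rho_{\balpha,\ell}^{(2)}\oplus\rho_{\balpha,\ell}^{(1)}$ from the proof of Theorem~\ref{thm:level} is not wrong, but note that in the paper that decomposition and the two-dimensionality of $\rho_{\balpha,\ell}^{(2)}$ are established by a \texttt{Magma} computation of Euler factors, not by identifying $\rho_{\balpha,\ell}^{(2)}$ with $H^3_{\text{et}}$ a priori; so invoking it here to deduce rigidity would reverse the logical order the paper uses. The point-counting and Weil argument is the self-contained way the paper closes this gap.
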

		
		Therefore, it remains to provide an argument for verifying that
		$\overline{\hat{\mathcal V}_{\balpha}(1)}$ is indeed a rigid Calabi--Yau
		threefold over $\Q$.
		Our argument mainly follows the idea explained in \cite[Appendix]{WvG} (see also \cite{Meyer} for the quintic case via point counting).
		Let $h^{i,j}$ denote the Hodge numbers of $\overline{\hat{\mathcal V}_{\balpha}(1)}$.
		Denote by $N$ the least common denominator of $r_1$ and $r_2$.
		For a field $\F_q$ of characteristic $p>5$ such that $q\equiv 1\pmod{N}$,
		it follows from Theorem~6.1 in~\cite{BCM} that
		\begin{equation}\label{eq:count-pt}
		\#\overline{\hat{\mathcal V}_{\balpha}(1)}(\F_q)=-H_q(\balpha,\bbeta;1)+F_\balpha(q),
		\end{equation}
		where $F_{\balpha}(q)$ is a polynomial in $q$ with integer coefficients.
		By Weil's (ex-)conjectures, the local zeta function of $\overline{\hat{\mathcal V}_{\balpha}(1)}$ over $\F_q$ assumes the form
		$$
		\exp\biggl(\sum_{r=1}^{\infty}\#\overline{\hat{\mathcal V}_{\balpha}(1)}\frac{T^r}{r}\biggr)
		=\frac{f_3(T)}{(1-T)(1-qT)^{h^{1,1}}(1-q^2T)^{h^{2,1}}(1-q^3T)},
		$$
		where $f_3(T)$ is a polynomial of degree $2+2h^{2,1}$ with all
		roots of absolute value $q^{3/2}$. By \eqref{eq:count-pt}, there are only two roots of this absolute value,
		hence $h^{2,1}=0$ and we conclude that $\overline{\hat{\mathcal V}_{\balpha}(1)}$ is rigid.

		\section{Character-sum proof of Theorem~\ref{thm:main}}\label{ss:charsum}
		
		Throughout the section a prime $p>5$ is fixed.
		
		The finite hypergeometric function $H_p(\balpha,\bbeta;1)$, which is to be compared with
		the truncated $_4F_3$ hypergeometric sum, is defined by means of character sums.
		In this section we use its non-character-sum representation from Proposition~\ref{prop:3}
		and a methodology reminiscent to the one we had in Section~\ref{ss:Dwork}.
		A similar argument was used in \cite{Kilbourn, McCarthy-RV} to cover two cases of Theorem~\ref{thm:main}.
		
		As in Section~\ref{ss:Dwork}, given one of the fourteen multi-sets $\balpha=\{r_1,r_2,r_3,r_4\}$,
		we assume it labelled in such a way that the corresponding ``derivative'' multi-set $\{r_1',r_2',r_3',r_4'\}$
		is ordered: $r_1'\le r_2'\le r_3'\le r_4'$. This implies that the integers $a_j:=[-r_j]_0=pr_j'-r_j$ for $j=1,2,3,4$
		are ordered accordingly, $a_1\le a_2\le a_3\le a_4$,
		but also the pairing of the parameters:
		$$
		r_1+r_4=r_2+r_3=1, \quad r_1'+r_4'=r_2'+r_3'=1 \quad\text{and}\quad a_1+a_4=a_2+a_3=p-1.
		$$
		
		\begin{Lemma}
			\label{lem:red1}
			For $0\le k\le p-2$, the following congruences hold true modulo $p^3$\textup:
			\begin{equation*}
			\prod_{j=1}^4\frac{\G_p\bigl(\bigl\{r_j-\frac k{p-1}\bigr\}\bigr)(-p)^{\nu(k,r_j(p-1))}}{\G_p\bigl(r_j-\frac k{p-1}\bigr)}
			\equiv\begin{cases}
			1 & \text{if}\; 0\le k\le a_1, \\
			p\bigl(r_1'-\frac k{p-1}\bigr) & \text{if}\; a_1+1\le k\le a_2, \\
			p^2\bigl(r_1'-\frac k{p-1}\bigr)\bigl(r_2'-\frac k{p-1}\bigr) & \text{if}\; a_2+1\le k\le a_3, \\
			0 & \text{if}\; k\ge a_3+1.
			\end{cases}
			\end{equation*}
		\end{Lemma}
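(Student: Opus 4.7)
The approach is to analyze each factor
\[
f_j(k) := \frac{\Gamma_p(\{r_j-k/(p-1)\})(-p)^{\nu(k, r_j(p-1))}}{\Gamma_p(r_j-k/(p-1))}
\]
separately and then assemble the product over $j=1,2,3,4$. Since $r_j \in (0, 1)$ and $k/(p-1) \in [0, 1)$, the rational $r_j - k/(p-1)$ lies in $(-1,1)$; its real fractional part equals $r_j - k/(p-1)$ exactly when $k \le r_j(p-1)$ (in which case $\nu = 0$), and equals $r_j - k/(p-1) + 1$ when $k > r_j(p-1)$ (in which case $\nu = 1$). Applying $\Gamma_p(x+1) = -x\,\Gamma_p(x)$ for $x \in \Z_p^\times$ (respectively $-\Gamma_p(x)$ for $x \in p\,\Z_p$) reduces $f_j(k)$ to one of three values: $1$, or $p(r_j - k/(p-1))$, or the exceptional value $p$ (which occurs precisely when $k = a_j > r_j(p-1)$).

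Since $\balpha$ is closed under Dwork's dash operation, $\{r_1, \dots, r_4\} = \{r_1', \dots, r_4'\}$ as multi-sets, so the ascending sort of $r_j(p-1)$ equals $r_1'(p-1) \le r_2'(p-1) \le r_3'(p-1) \le r_4'(p-1)$. Defining $\sigma$ as the permutation with $r_{\sigma(i)} = r_i'$, the active set $S(k) := \{j : k > r_j(p-1)\}$ equals $\{\sigma(1), \dots, \sigma(s)\}$ for the unique $s$ satisfying $r_s'(p-1) < k \le r_{s+1}'(p-1)$. The identity $r_i'(p-1) = a_i + (r_i - r_i')$ with $|r_i - r_i'| < 1$ pins each real break point within one unit of the integer $a_i$; in particular $r_1 \ge r_1' = \min_j r_j$ gives $r_1'(p-1) \ge a_1$, so Case~1 ($k \le a_1$) has $S(k) = \emptyset$ and $\prod_j f_j(k) = 1$.

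For Cases~2 and~3, the generic situation has $|S(k)| = s \in \{1,2\}$ with every active $j$ satisfying $k \ne a_j$; the product is then $p^s \prod_{i=1}^s (r_i' - k/(p-1))$, exactly matching the stated closed forms. A boundary situation arises only at $k = a_{\sigma(i_0)}$ for some active $i_0$, where $f_{\sigma(i_0)} = p$ instead of $p(r_{\sigma(i_0)} - k/(p-1))$. In each such alignment, the dash identity $r_j - a_j/(p-1) = p(r_j' - a_j/(p-1))$ together with the tautological relations built into the $\sigma$-labelling (for instance, $\sigma(1) = 2$ forces $r_{\sigma(1)}' = r_2' = r_{\sigma(2)}$) converts the discrepancy into an extra $p$-divisibility that exactly matches the increase in the $p$-adic valuation of the claimed expression at this $k$, preserving the congruence modulo $p^3$. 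Finally, $|S(k)| \ge 3$ throughout Case~4, so the product is divisible by $p^3$ and vanishes modulo $p^3$.

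The hardest step is the boundary bookkeeping in Cases~2 and~3: carefully matching the integer intervals $[a_{i-1}+1, a_i]$ against the real break points $r_i'(p-1)$ (which can disagree by up to one unit) and verifying in every exceptional alignment that the dash identity, combined with the complement pairings $r_1' + r_4' = r_2' + r_3' = 1$ built into the hypergeometric data, produces the claimed closed form modulo $p^3$.
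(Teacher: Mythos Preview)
Your approach is essentially the same as the paper's: both compute each factor $f_j(k)$ via the functional equation $\Gamma_p(x+1)=-x^\star\Gamma_p(x)$ (with the starred factor dropped when $x\in p\Z_p$), then use the closure of $\balpha$ under the dash operation to re-index the product over the ordered set $\{r_1',\dots,r_4'\}$, and finally handle the boundary integers $k=a_2,a_3$ through the identity $r_j-a_j/(p-1)=p\bigl(r_j'-a_j/(p-1)\bigr)$. The paper is more explicit at those boundary points, separating out the two subcases $r_2\gtrless r_2'$ at $k=a_2$ (showing directly that $p^2\bigl(r_2'-\tfrac{k}{p-1}\bigr)=p\bigl(r_1'-\tfrac{k}{p-1}\bigr)$ when $r_2<r_2'$, since then $r_2=r_1'$) and $r_3\gtrless r_3'$ at $k=a_3$ (showing that one of $r_1'-\tfrac{k}{p-1}$, $r_2'-\tfrac{k}{p-1}$ is itself divisible by~$p$, so both sides vanish modulo~$p^3$); your ``dash identity plus tautological $\sigma$-relations'' covers these, but the paper's explicit verification is what actually pins the argument down.
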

		
		\begin{proof}
			Observe that for each $j=1,2,3,4$, if $k\le r_j(p-1)$ then
			$$
			\G_p\biggl(\biggl\{r_j-\frac k{p-1}\biggr\}\biggr)(-p)^{\nu(k,r_j(p-1))}=\G_p\biggl(r_j-\frac k{p-1}\biggr);
			$$
			and if $r_j(p-1)<k<p-1$ then
			$$
			\G_p\biggl(\biggl\{r_j-\frac k{p-1}\biggr\}\biggr)(-p)^{\nu(k,r_j(p-1))}
			=-p\G_p\biggl(1+r_j-\frac k{p-1}\biggr)
			=p\biggl(r_j-\frac k{p-1}\biggr)^\star\G_p\biggl(r_j-\frac k{p-1}\biggr)
			$$
			where the factor in $(\,\cdot\,)^\star$ is omitted when divisible by $p$, that is, when $k=[-r_j]_0=a_j$.
			The latter can only happen when $a_j>r_j(p-1)$, equivalently, when $r_j<r_j'$ (hence $r_j=r_\ell'$ for some $\ell<j$ in view of the ordering).
			Thus, for a given $k$, $0\le k\le p-2$,
			\begin{align*}
			&
			\prod_{j=1}^4\frac{\G_p\bigl(\bigl\{r_j-\frac k{p-1}\bigr\}\bigr)(-p)^{\nu(k,r_j(p-1))}}{\G_p\bigl(r_j-\frac k{p-1}\bigr)}
			=\prod_{\substack{j=1\\r_j(p-1)<k}}^4p\biggl(r_j-\frac k{p-1}\biggr)^\star
			\displaybreak[2]\\ &\qquad
			=\prod_{\substack{j=1\\r_j'(p-1)<k}}^4p\biggl(r_j'-\frac k{p-1}\biggr)^\star
			=\prod_{\substack{j=1\\a_j+r_j-r_j'<k}}^4p\biggl(r_j'-\frac k{p-1}\biggr)^\star
			\displaybreak[2]\\ &\qquad
			=\begin{cases}
			1 & \text{if}\; 0\le k\le a_1, \\
			p\bigl(r_1'-\frac k{p-1}\bigr) & \text{if}\; a_1+1\le k<a_2, \\
			p\bigl(r_1'-\frac k{p-1}\bigr) & \text{if}\; k=a_2 \;\text{and}\; r_2\ge r_2', \\
			p^2\bigl(r_2'-\frac k{p-1}\bigr) & \text{if}\; k=a_2 \;\text{and}\; r_2<r_2', \\
			p^2\bigl(r_1'-\frac k{p-1}\bigr)\bigl(r_2'-\frac k{p-1}\bigr) & \text{if}\; a_2+1\le k<a_3, \\
			p^2\bigl(r_1'-\frac k{p-1}\bigr)\bigl(r_2'-\frac k{p-1}\bigr) & \text{if}\; k=a_3 \;\text{and}\; r_3\ge r_3', \\
			p^3\bigl(r_1'-\frac k{p-1}\bigr)\bigl(r_2'-\frac k{p-1}\bigr)\bigl(r_3'-\frac k{p-1}\bigr)/\bigl(r_3-\frac k{p-1}\bigr) & \text{if}\; k=a_3 \;\text{and}\; r_3<r_3', \\
			0\mod p^3 & \text{if}\; k\ge a_3+1.
			\end{cases}
			\end{align*}
			In the case $k=a_3$ and $r_3<r_3'$, we have either $r_1'-\frac k{p-1}$ or $r_2'-\frac k{p-1}$ divisible by $p$, so that
			$$
			p^2\biggl(r_1'-\frac k{p-1}\biggr)\biggl(r_2'-\frac k{p-1}\biggr)
			\equiv 0
			\equiv p^3\frac{\bigl(r_1'-\frac k{p-1}\bigr)\bigl(r_2'-\frac k{p-1}\bigr)\bigl(r_3'-\frac k{p-1}\bigr)}{r_3-\frac k{p-1}}
			\mod p^3;
			$$
			while in the case $k=a_2$ and $r_2<r_2'$, hence $r_2=r_1'$, we get
			\begin{align*}
			p^2\biggl(r_2'-\frac k{p-1}\biggr)
			&=p^2\biggl(r_2'-\frac{pr_2'-r_2}{p-1}\biggr)
			\\
			&=p\biggl(r_2-\frac{pr_2'-r_2}{p-1}\biggr)
			=p\biggl(r_1'-\frac k{p-1}\biggr)
			\end{align*}
			meaning that the expression obtained in this case agrees with the one for $k=a_2$ and $r_2\ge r_2'$.
			This completes the proof of the lemma.
		\end{proof}
		
		Using successively \eqref{eq:J-role} and \eqref{(t)_k} we have
		\begin{align*}
		&
		\frac{\prod_{j=1}^4\G_p\bigl(r_j-\frac k{p-1}\bigr)}{\G_p\bigl(1-\frac k{p-1}\bigr)^4\prod_{j=1}^4\G_p(r_j)}
		=\frac{\prod_{j=1}^4\G_p\bigl(r_j+k+\frac{kp}{1-p}\bigr)}{\G_p\bigl(1+k+\frac{kp}{1-p}\bigr)^4\prod_{j=1}^4\G_p(r_j)}
		\\ &\qquad
		\equiv\frac{\prod_{j=1}^4\G_p(r_j+k)}{\G_p(1+k)^4\prod_{j=1}^4\G_p(r_j)}
		\biggl(1+J_1(k)\,\frac{kp}{1-p}+J_2(k)\,\frac{(kp)^2}{(1-p)^2}\biggr)\mod p^3
		\\ &\qquad
		=\frac{\prod_{j=1}^4(r_j)_k}{k!^4\prod_{j=1}^4(pr_j')^{\nu(k,a_j)}}
		\biggl(1+J_1(k)\,\frac{kp}{1-p}+J_2(k)\,\frac{(kp)^2}{(1-p)^2}\biggr),
		\end{align*}
		where $J_1(k)$ and $J_2(k)$ are defined in~\eqref{J_12}.
		
		Combining the calculation and Lemma~\ref{lem:red1} we obtain the following: if $0\le k\le a_1$ then
		$$
		\frac{\prod_{j=1}^4\G_p\bigl(\bigl\{r_j-\frac k{p-1}\bigr\}\bigr)(-p)^{\nu(k,r_j(p-1))}}{\G_p\bigl(1-\frac k{p-1}\bigr)^4\prod_{j=1}^4\G_p(r_j)}
		\equiv\frac{\prod_{j=1}^4(r_j)_k}{k!^4}\biggl(1+J_1(k)\,\frac{kp}{1-p}+J_2(k)\,\frac{(kp)^2}{(1-p)^2}\biggr)\mod p^3;
		$$
		if $a_1+1\le k\le a_2$ then
		\begin{align*}
		\frac{\prod_{j=1}^4\G_p\bigl(\bigl\{r_j-\frac k{p-1}\bigr\}\bigr)(-p)^{\nu(k,r_j(p-1))}}{\G_p\bigl(1-\frac k{p-1}\bigr)^4\prod_{j=1}^4\G_p(r_j)}
		&\equiv\frac{\prod_{j=1}^4(r_j)_k}{k!^4}\,\frac{r_1'-\frac k{p-1}}{r_1'}\biggl(1+J_1(k)\,\frac{kp}{1-p}\biggr)\mod p^3
		\\
		&\equiv\frac{\prod_{j=1}^4(r_j)_k}{k!^4}\biggl(1+\frac k{r_1'}+\frac{kp}{r_1'}\biggr)(1+J_1(k)kp)\mod p^3;
		\end{align*}
		if $a_2+1\le k\le a_3$ then
		\begin{align*}
		\frac{\prod_{j=1}^4\G_p\bigl(\bigl\{r_j-\frac k{p-1}\bigr\}\bigr)(-p)^{\nu(k,r_j(p-1))}}{\G_p\bigl(1-\frac k{p-1}\bigr)^4\prod_{j=1}^4\G_p(r_j)}
		&\equiv\frac{\prod_{j=1}^4(r_j)_k}{k!^4}\,\frac{\bigl(r_1'-\frac k{p-1}\bigr)\bigl(r_2'-\frac k{p-1}\bigr)}{r_1'r_2'}\mod p^3
		\\
		&\equiv\frac{\prod_{j=1}^4(r_j)_k}{k!^4}\biggl(1+\frac k{r_1'}\biggr)\biggl(1+\frac k{r_2'}\biggr)\mod p^3;
		\end{align*}
		and if $k\ge a_3+1$ then
		$$
		\frac{\prod_{j=1}^4\G_p\bigl(\bigl\{r_j-\frac k{p-1}\bigr\}\bigr)(-p)^{\nu(k,r_j(p-1))}}{\G_p\bigl(1-\frac k{p-1}\bigr)^4\prod_{j=1}^4\G_p(r_j)}
		\equiv0
		\equiv\frac{\prod_{j=1}^4(r_j)_k}{k!^4}\mod p^3.
		$$
		
		Proposition \ref{prop:3} now implies
		\begin{equation*}
		H_p(\balpha,\bbeta;1)
		\equiv\sum_{k=0}^{p-1}\frac{\prod_{j=1}^4(r_j)_k}{k!^4}+\wt C_1\cdot\frac{1}{1-p}+\wt C_2\cdot\biggl(\frac{1}{1-p}\biggr)^2 \mod p^3,
		\end{equation*}
		where $\wt C_1$ and $\wt C_2$ are as in Lemma~\ref{cor:1}. From that lemma and also Lemma~\ref{lem:char} we therefore conclude that
		\begin{equation*}
		H_p(\balpha,\bbeta;1)
		\equiv\sum_{k=0}^{p-1}\frac{\prod_{j=1}^4(r_j)_k}{k!^4}+\chi_{\balpha}(p)\cdot p\mod p^3.
		\end{equation*}
		Thus, Theorem~\ref{thm:main} follows from comparison of this congruence with the equality~\eqref{eq:Haa}.
		Combining this result with Theorem~\ref{thm:level} one obtains for $p>5$,
		\begin{equation*} 
		F_1(\balpha)\equiv a_p(f_{\alpha})\mod p^3.
		\end{equation*}
		
		\section{Conclusion}
		\label{sec:7}
		
		For the convenience of readers, here we summarize our strategies
		used in this paper for establishing the supercongruences
		
		Our first  \cy result \bk is built on Dwork's work \cite{Dwork-padic}; especially, on Dwork's dash operation which allows us
		to derive the key reduction formula \eqref{eq:red}. The formula is then used to separate
		the first $p$-adic digit $a$ of a non-negative integer $k=a+bp$ in ratios $(r)_k/(1)_k$ of rising factorials
		and re-express the ratios by means of $(r)_a/(1)_a$ and explicit additional terms.
		Combined with Proposition \ref{prop:Dwork}, this leaves the task of showing that two particular coefficients,
		$C_1$ and $C_2$, are both congruent to $0$ modulo $p^3$. The latter is done by a residue sum calculation for
		certain rational functions and an execution of $p$-adic perturbation techniques.
		
		\cy In the character-sum approach to Theorem \ref{thm:main} \bk we use our Theorem \ref{thm:level} and first express
		the hypergeometric sums $H_p(\balpha,\bbeta;1)$ in the $p$-adic Gamma function form \eqref{eq:Hp-pGamma},
		in which the arguments are fractional parts, via the Gross--Koblitz formula.
		Then Lemma \ref{lem:red1} plays a role similar to that of formula \eqref{eq:Lambda} 
		and simplifies the expression; the newer version of $H_p(\balpha,\bbeta;1)$ can be further expanded in powers of $p/(1-p)$
		using local analyticity properties of the $p$-adic Gamma function. The remaining part is verifying that two coefficients
		in this expansion, $\tilde C_1$ and $\tilde C_2$, are both congruent to~0 modulo $p^3$, somewhat that has been already
		established earlier (in Lemma~\ref{cor:1}) as a companion to the congruences for $C_1$ and $C_2$.

		Among all fourteen weight 4 modular forms corresponding to the rigid hypergeometric Calabi--Yau threefolds,
		only $f_{\{\frac 14,\frac 34,\frac 13,\frac 23\}}$ is a CM modular form:
		$$
		a_p(f_{\{\frac 14,\frac 34,\frac 13,\frac 23\}})=J(\chi_3,\chi_3)^3+J(\chi_3^2,\chi_3^2)^3,
		$$
		when $p\equiv 1 \mod 3$ where $\chi_3$ stands for any cubic character of $\F_p$;
		so $p\equiv 1 \mod 3$ is ordinary and the unit root is $\gamma_p=-\G_p\bigl(\frac 13\bigr)^9$;
		for prime $p\equiv 2\mod 3$,  the Fourier coefficient simply vanishes.
		Numerically we observe that the supercongruence for this case also reflects the additional CM structure: data suggest  that for all primes $p\equiv 1\mod 3$,
		$$
		\PFQ{4}{3}{\frac 14,\,\frac 34,\,\frac 13,\,\frac 23}{1,\,1,\,1}{1}_{p-1}
		\equiv -\G_p\Bigl(\frac 13\Bigr)^9 \mod p^4,
		$$
		which is sharp.  In comparison,  the power $p^3$ is sharp in the statement of Theorem~\ref{thm:main}  for all other thirteen non-CM cases.
		Any proof of this observation may lead to new techniques in proving supercongruences of such kind.
		
		One may also hope to extend horizons of another powerful method of ``creative microscoping'' \cite{GZ19,GZ20} used for proving hypergeometric supercongruences related to quadratic characters rather than general Dwork's unit roots.
		The ideas behind the method are based on suitable ($q$-)deformations to replace $p$-adic perturbations, and missing ingredients at present are connections with combinatorics (``counting'') and geometry.
		Further development of this theme has potentials to also address the refined predictions from \cite{RRW} of Roberts and Rodriguez-Villegas mentioned in Section~\ref{sec2.1}.
		
		Finally, we would mention that the techniques developed in this paper are applicable to numerous other supercongruences of ``geometric'' or ``motivic'' origin;
		in particular, to the hypergeometric patterns observed in~\cite{RRW}.
		
		\section*{Acknowledgements}
		The authors would like to thank  the Banff International Research Station and the \text{MATRIX} Institute at the University of Melbourne in Creswick for excellent opportunities for collaboration.
		The authors are grateful to the enlightening discussions and interests of many colleagues including Frits Beukers, Charles Doran, Jerome Hoffman, Siu-Hung Ng, Robert Osburn, Ravi Ramakrishna, David Roberts, Fernando Rodriguez-Villegas, Duco van Straten, Masha Vlasenko, Mark Watkins and Jie Zhou. 
		The authors would like to further thank Fran\c cois Brunault
		and Michael Somos for supplying us with some explicit $\eta$-expressions in Table~\ref{tab1}, as well as
		the anonymous referees for several constructive suggestions.

		\appendix
		\section{The role of the unit root}
		\label{ss:A}
		
		\subsection{1-CFGL}
		\label{ss:A.1}
		Closely related to Dwork's congruence (Proposition \ref{prop:Dwork}) is the theory of 1-dimensional Commutative Formal Group Laws (1-CFGL) over a commutative ring $R$; see \cite[Appendix]{Stienstra-Beukers} for a review of this topic. A 1-CFGL is a formal power series $G(u,v)=u+v+\text{higher degree terms}\in R[[u,v]]$ that satisfies commutativity and associativity. It is determined by its logarithm which takes the form
		$$
		\ell(\tau)=\sum_{n\ge 1} \frac{b_n}{n}\tau^n,
		$$
		where $b_n\in R$ and $b_1=1$; namely, $G(u,v)=\ell^{-1}(\ell(u)+\ell(v))$. In \cite{Honda68}, Honda showed that for an elliptic curve $E$ defined over $\Z$ with  good reduction modulo $p$, the logarithm $\ell(\tau)$ of the associated formal group  can be taken as  the integral of  the normalized invariant differential of $E$ expanded at infinity.
		When $ b_p\not\equiv 0\pmod p$, the $p$-adic limit
		$$
		\tilde \gamma_p=\lim_{s\to \infty}\frac{b_{p^s}}{b_{p^{s-1}}}
		$$
		is the reciprocal of the unique invertable in~$\Z_p$ root of the denominator of the zeta function of~$E$.
		In view of the modularity of $E$, this $\gamma_p$ is a root of $T^2-a_p(f_E)T+p=0$ with $\gamma_p\not\equiv0\pmod p$, where $f_E$ is the weight~2 cuspidal Hecke eigenform associated with~$E$.
		Generalizations of Honda's result include \cite[Theorem 9]{Yui78} by the third author for formal Dirichlet series over an integral domain $R$ of characteristic~0. 	For $V_{\balpha}(\psi)$ listed in Table \ref{tab4}, the following theorem of Stienstra will be useful to us.
		
		\begin{Theorem}[Stienstra {\cite[Theorem 1]{Stienstra}}]\label{Stienstra}
			Let $K$ be a noetherian ring which is flat over $\Z$. Let $F_1,\dots,F_r$ be a regular sequence of homogenous polynomial in $K[T_0,\dots,T_N]$ and let $X$ be the scheme of $\P_K^N$ defined by the ideal $(F_1,\dots,F_r)$. Put $d_i=\deg F_i$ and $d=\sum_{i=1}^r d_i$. Assume that $X$ is flat over $K$ and $d_i\ge d-N\ge 1 $ for all~$i$. Let
			$$
			J=\{i=(i_0,\dots,i_N)\in \Z^{N+1} \mid i_0,\dots,i_N\ge 1, i_0+\dots i_N=d\}.$$
			Then there is a formal group law for $H^{N-r}(X,\hat G_{m,\mathcal O_X})$ over $K$ of dimension $n=\binom{d-1}N$ whose logarithm $\ell(\tau)$ is the $n$-tuple $(\ell_i(\tau))_{i\in J}$ of power series in $\tau=(\tau_i)_{i\in J}$ given by
			$$\ell_i(\tau)=\sum_{m\ge 1}\sum_{j\in J}m^{-1}\beta_{m,i,j}\tau_j^m,$$ where $\beta_{m,i,j}$ is the coefficient of $T_0^{mj_0-i_0}\cdots T_N^{mj_N-i_N}$ in $(F_1\cdots F_r)^{m-1}$.
		\end{Theorem}
		
		Here, given a formal group $G$, a sheaf $\mathcal J$ of $K$-algebras on $X$ and $i\in \Z_{\ge 0}$,  $H^i(X,G_\mathcal J)$ is a so-called Artin--Mazur  functor from nilpotent $K$-algebras to abelian groups; see \cite[Section~2]{Stienstra}. For the above, $G=\hat G_m$, the multiplicative formal group law  given by $\hat G_m(u,v)=u+v+uv$ for $u,v\in K$ and $\mathcal O_X$ is the structure sheaf  of  $X$.

		\subsection{1-CFGL and rigid Calabi--Yau threefolds}
		\label{ss:A.2}
		We now explain how to use Theorem~\ref{Stienstra} to show that $\gamma_p=\gamma_p(\balpha)$ in Proposition \ref{prop:Dwork} is related to the zeta function of a smooth model of $V(1)$.
		We will demonstrate this in the case of $(r_1,r_2)=(\frac12,\frac12)$. In this case,   $V(\psi)$ is arising from the intersection of four hypersurfaces. Other cases are verified similarly.  According to Table~\ref{tab4}, the threefold $V(\psi)$ is the intersection of four homogenous equations $f_1=f_2=f_3=f_4=0$, where
		\begin{alignat*}{2}
		f_1&=X_1^2+X_2^2-2 \psi X_3X_4, \quad& f_2&=X_3^2+X_4^2-2\psi X_5X_6,\\
		f_3&=X_5^2+X_6^2-2\psi X_7X_8, \quad& f_4&=X_7^2+X_8^2-2 \psi X_1X_2.
		\end{alignat*}
		Take $Y(\psi)=f_1f_2f_3f_4$. 
		
		The sequence $f_1,f_2,f_3, f_4$ is regular (see \cite[p.~184]{Hartshorne} for definition);
we claim that, for any fixed $\psi\in \Z$, the scheme is flat over $K=\Z[(2\psi)^{-1}]$, \cy which is a PID. \bk 
		
		Recall that over a PID, a module is flat if and only if it is torsion free.
If $\ff$ in $R=K[X_1,\dots, X_8]$ is a torsion of $R/I$, then  $r\cdot \ff\in I$ for some  $r\in K$; that is, $\ff$ can be written as  a linear combination of $f_i$ with coefficients in $\Q[X_1,\dots, X_8]$. Explicitly, we can show that these coefficients are in $R$, hence $\ff$ must be trivial in  $R/I$.  We first use $f_1$ or $(2\psi)^{-1}f_4$ to eliminate the presence of $X_1$ (and similarly of $X_2$ as they are interchangeable) from $\ff$ by replacing $X_1^2$ with $f_1-X_2^2+2\psi X_3X_4$.  After the elimination we write the reduced form of $\ff$ as $\ff_1+\ff_2$, where $\ff_1\in K[X_3,\dots, X_8] $ and $\ff_2\in K[X_1,\dots, X_8][f_1,\dots,f_4]$. Among them $\ff_1$ can be further written as a linear combination of $f_2,f_3$ with coefficients in $K[X_3,\dots,X_8]$. This allows us to record $\ff$  as a linear combination of $f_i$ with coefficients in $K[X_1,\dots,X_8]$. Thus, the scheme $X$ is indeed flat over the localized ring~$K$. 
				
		We next apply Theorem \ref{Stienstra} with $K=\Z[(2\psi)^{-1}]$. For the homogeneous polynomials $f_i$, degree  $d_i=\deg f_i=2$ and  $d=\sum_{i=1}^4d_i=8$. The dimension of the projective space is $N=8-1=7$, hence $d_i=2\ge 1=d-N$ for each $i$.
		The set $J$ is $\{(i_1,\dots, i_8)\in \Z^8: i_i\ge 1,\; i_1+\dots +i_8=d\}=\{(1,\dots,1)\}$. 
		By  Theorem~\ref{Stienstra}, $H^3(V(\psi), \hat G_{m, \mathcal O_{V(\psi)}})$ is a 1-CFGL, and the $m$-th coefficient $b_m(\psi)$ of its  logarithm is the coefficient of $(X_1\cdots X_8)^{m-1}$ of $(f_1\cdots f_4)^{m-1}$, namely,
		\begin{equation}\label{eq:b}
		b_m(\psi)={ 2^{4m-4}}\sum_{k= 0}^{m-1} \binom{m-1}{2k}^4 \frac{(\frac12)_k^4}{k!^4} \psi^{-8k}
		=2^{4m-4}\sum_{k\ge 0}^{m-1}\frac{(1-\frac{m}2)_k^4}{k!^4}\frac{(\frac12-\frac {m}2)_k^4}{k!^4}\psi^{-8k}.
		\end{equation}
		In particular, $b_p(1)\equiv F_1(\{\frac12,\frac12,\frac12,\frac12\}) \mod p$ when $p>2$ is a prime.
		Assume from now on that $b_p(1)\not\equiv 0\mod p$,  that is, $p$~is ordinary. Then by the theory of 1-CFGL \cite[Theorem A.8(v)]{Stienstra-Beukers},
		there exists a unique $\tilde \gamma_p\in \Z_p^\times$ such that
		\begin{equation}\label{eq:FGL}
		\tilde \gamma_p\equiv \frac{b_{p^s}(1)}{b_{p^{s-1}}(1)} \mod p^s
		\quad\text{for all}\; s\ge 1.
		\end{equation}
		
		Recall that by Proposition \ref{prop:Dwork}, for any \emph{ordinary} odd prime $p$ there is a $p$-adic unit root $\gamma_p=\lim_{s\to \infty} F_s/F_{s-1}$.  We use the following lemma to show that $\gamma_p=\tilde \gamma_p$.
		
		\begin{Lemma}\label{lem:2}
			Let $p$ be an odd prime such that $b_p(1)\not\equiv 0 \mod p$. Then for any fixed $m\ge 1$,
			$$
			\tilde \gamma_p=\lim_{s\to \infty}b_{p^s}(1)/b_{p^{s-1}}(1)\equiv \lim_{s\to \infty} F_s/F_{s-1}\equiv\gamma_p \mod p^m.
			$$
		\end{Lemma}
		
		\begin{proof}
			Let
			$$
			F(\l)=\sum_{k=0}^\infty \frac{(\frac12)_k^4}{k!^4}\l^k.
			$$
			By \cite[Theorems 2 and 3]{Dwork-padic}, for any integer $s\ge 1$,
			$$
			F(x)\cdot\PFQ{4}{3}{\frac 12 ,\, \frac 12 ,\, \frac 12 ,\, \frac 12}{1 ,\, 1 ,\, 1}{x^p}_{p^{s-1}-1}
			\equiv F(x^p)\cdot\PFQ{4}{3}{\frac 12 ,\, \frac 12 ,\, \frac 12 ,\, \frac 12}{1 ,\, 1 ,\, 1}{x}_{p^{s}-1}
			\mod p^s\Z_p[[x]],
			$$
			where the truncations of the hypergeometric sum are defined as in~\eqref{eq:4F3trunc}.
			These congruences define the quotient $F(\l)/F(\l^p)$ as a uniform $p$-adic analytic function $f(\l)$, which assumes unit values on
			$$
			D=\bigg\{z\in \C_p: |z|\le 1, \; \bigg|\PFQ{4}{3}{\frac 12 ,\, \frac 12 ,\, \frac 12 ,\, \frac 12}{1 ,\, 1 ,\, 1}{z}_{p-1}\bigg|=1\bigg\}, 
			$$
			where $|\,\cdot\,|$ is the norm on~$\C_p$. When $\l=1$ and $F_1\ne 0\mod p$, 
			$$
			f(1)=\gamma_p=\lim_{s\to\infty} \frac{F_s(1)}{F_{s-1}(1)}
			$$
			by Proposition~\ref{prop:Dwork}. 
			
			Let  $\l=\psi^{-8}$ and
			$$
			c_{p^s}(\l):=\sum_{k=0}^{\frac{p^s-1}2}\frac{(1-\frac{p^s}2)_k^4}{k!^4}\frac{(\frac12-\frac {p^s}2)_k^4}{k!^4}\l^k,
			$$
			so that $b_{p^s}(\psi)=2^{4(p^s-1)}c_{p^s}(\l)$ for $b_{p^s}(\psi)$ defined in~\eqref{eq:b}.
			Note that $2^{4(p^s-p^{s-1})}\equiv 1\mod p^s$. 
			Under the ordinary assumption, meaning that $\l\in D$, by the property of 1-CFGL (see \cite[Theorem A.8(v)]{Stienstra-Beukers} or \cite[Part~I, eq.~(12)]{BV}),  the $p$-adic limit
			$$
			\tilde \gamma(\l)=\lim_{s\to\infty } \frac{b_{p^s}(\psi)}{b_{p^{s-1}}(\psi^p)}
			$$
			exists and, for any $s\ge 1$,
			$$ \frac{b_{p^s}(\psi)}{b_{p^{s-1}}(\psi^p)}\equiv  \frac{c_{p^s}(\l)}{c_{p^{s-1}}(\l^p)} \mod p^s.$$
			Following  the analysis of \cite[Part I, Example 5.5]{BV}, we conclude that the congruences
			$$\frac{(\frac{1-p^s}2)_k}{k!}\equiv \frac{(\frac{1}2)_k}{k!} \mod p^{s-\text{ord}_p(k!)}, \quad   \frac{(1-\frac{p^s}2)_k}{k!}\equiv 1  \mod p^{s-\text{ord}_p(k!)},$$
			imply
			\begin{equation}\label{eq:15}
			c_{p^s}(\l)\equiv F(\l) \mod \big(\l^s,p^{\lfloor s\frac{p-2}{p-1}\rfloor}\big)\Z_p[[\l]]. 
			\end{equation}
			At the same time, we also have from 1-CFGL, for any fixed $m\ge 1$ and any $t,s\ge m$,
			$$\frac{c_{p^t}(\l)}{c_{p^{t-1}}(\l^p)}\equiv\frac{c_{p^s}(\l)}{c_{p^{s-1}}(\l^p)} \mod p^m.$$
			Since both $s$ (the degree of $\l$) and $\lfloor s\frac{p-2}{p-1}\rfloor$ (the exponent of~$p$) in the ideal $\big(\l^s,p^{\lfloor s\frac{p-2}{p-1}\rfloor}\big)\Z_p[[\l]]$ in \eqref{eq:15} are  increasing functions of $s$, by letting $s\to\infty$ in~\eqref{eq:15} we arrive at
			$$
			\frac{c_{p^t}(\l)}{c_{p^{t-1}}(\l^p)}
			\equiv \lim_{s\to \infty}\PFQ{4}{3}{\frac 12 ,\, \frac 12 ,\, \frac 12 ,\, \frac 12}{1 ,\, 1 ,\, 1}{\l}_{p^{s}-1}\bigg/\PFQ{4}{3}{\frac 12 ,\, \frac 12 ,\, \frac 12 ,\, \frac 12}{1 ,\, 1 ,\, 1}{\l^p}_{p^{s-1}-1} \mod p^m
			$$
			valid for any $m\ge1$.
			Thus,
			$$\lim_{t\to \infty}\frac{c_{p^t}(\l)}{c_{p^{t-1}}(\l^p)}=f(\l).$$ 
			Choosing $\l=1$, the lemma follows.
		\end{proof}

		\cy
		
		In what follows we continue to assume $\psi=1$.
		Note that the singularities of algebraic threefolds can be resolved using essentially blowups to get smooth threefolds over a field of characteristic~0 (see \cite{Zariski1944}) or $p>5$ (see \cite{Abhyankar66}). We denote by $X$ a smooth model of $V(1)$ over $\F_p$ of characteristic larger than~5. For the remaining of this section, 
		we outline how Stienstra's results \cite[\S\,3.6]{Stienstra-Formal} imply that $\gamma_p$ is related to the zeta function of~$X$ (hence, of~$V(1)$) via the crystalline cohomology groups $H^N_{\text{cris}}(X)$ (see \cite{Stienstra-Formal} for definition and notation).  A related discussion is available in the recent work \cite[Appendix]{BV} of Beukers and Vlasenko on the Dwork $F$-crystal.
		
		Recall from \cite[Chapter 3]{Stienstra-Formal} that the zeta function of $X$ over $\F_p$  is
		$$
		Z(X/\F_p;T)=\prod_{N=0}^{6}P_N(T)^{(-1)^{N+1}},
		$$
		where 
		$P_N(T)=\det\big(1-T F_p| H^N_{\text{cris}}(X)\otimes\Q\big)$ and $F_p$ is the Frobenius endomorphism on the de Rham--Witt complex on~$X$ (see \cite[\S\,3.4]{Stienstra-Formal}). 
		
		\begin{Proposition}
			\label{prop:St}
			In the notation above, $P_3(1/\gamma_p)=0$.
		\end{Proposition}
		
		\begin{proof}
			We give the steps of how the ingredients of Stienstra's work \cite{Stienstra, Stienstra-Formal} combine together. 
			
			\smallskip\noindent
			(i)  Stienstra relates the CFGL groups of $Y(1)=0$ (defined by the hypersurface $f_1\cdots f_4=0$) and of $V(1)$ (the complete intersection of $f_1=0,\dots,f_4=0$). Let $f\colon V(1)\to\P^7$ be the inclusion map, $\mathcal O_{V(1)}$ structure sheaf of $V(1)$ and $\mathcal F=f_* \mathcal O_{V(1)}$. For any non-empty proper subset $\rho\subset\{1,2,3,4\}$, denote $Y_\rho=\prod_{i\in \rho} f_i$; then $Y_\rho=0$ is a variety of $\P^7$ containing $V(1)$. Let $\mathcal F_\rho$ be the corresponding sheaf defined as for~$\mathcal F$.  In \cite[Section~4]{Stienstra}, Stienstra shows the following two claims (see  \cite[Lemma 4.5  and (4.6.1)]{Stienstra}):
			\begin{gather}
			\label{eq:6}
			H^i(\mathbb P^7, \hat G_{m,\mathcal F_\rho})=0 \quad \text{for}\; i=1,\dots,7,
			\\
			\label{eq:7}
			H^7(\mathbb P^7, \hat G_{m, \mathcal F}) \cong H^{3}(V(1),\hat G_{m,\mathcal O_{V(1)}}).
			\end{gather}
			As the unit root $\gamma_p$ is computed from the formal logarithm of $H^7(\mathbb P^7, \hat G_{m, \mathcal F})$, it can be computed from the formal logarithm of $H^{3}(V(1),\hat G_{m,\mathcal O_{V(1)}})$. 
			
			\smallskip\noindent
			(ii) To pass $\gamma_p$ from $H^{3}(V(1),\hat G_{m,\mathcal O_{V(1)}})$ to $H^{3}(X,\hat G_{m,\mathcal O_{X}})$,
					it is sufficient \cite[\S\,2.5]{Stienstra-Formal} to show that the singularities of $V(1)$ are \emph{rational} singularities (see \cite[Definition 1]{Viehweg}).
			(In our situation, the singularities of $V(1)$ are determined by the Jacobian matrix 
			$$
			\begin{pmatrix} 
			2X_1&2X_2&-2X_4&-2X_3&0&0&0&0\\
			0&0& 2X_3&2X_4&-2X_6&-2X_5&0&0\\
			0&0&0&0& 2X_5&2X_6&-2X_8&-2X_7\\
			-2X_2&-2X_1&0&0&0&0& 2X_7&2X_8
			\end{pmatrix}
			$$
			of rank less than four.
			The Jacobian gives rise to 96 isolated solutions in $\C\P^7$ including, for example, $[1,1,\dots,1]$.  
			The set of singularities  can be verified using \texttt{Magma} package \texttt{PrimaryComponents(SingularSubscheme(V))}.)
			The singularities of $V(1)$ only consist of isolated ordinary double points, hence are rational singularities. They can be resolved individually by blowing up (see, for example, \cite[Section~1.4]{Hartshorne} and \cite{Zariski1944}).  
			
			\smallskip\noindent
			(iii) Finally, in \cite[Section 3]{Stienstra-Formal}, Stienstra explains how to go from  $H^{3}(X,\hat G_{m,\mathcal O_{X}})$ to  $H_{\text{cris}}^3(X)$.
			The particular relevant to us result is as follows.
			
			\begin{Theorem}[Stienstra]
				Let $X$ be a smooth projective variety over $\F_p$ such that  $H^N(X,\hat G_{m,X})$ is a \textup{1-CFGL} over $\Z_p$ with formal logarithm
				$$\sum_{m\ge 1}m^{-1} \beta_m \tau^m.
				$$
				Take
				$$
				P_N(T)=\det\big(1-T F_p| H^N_{\operatorname{cris}}(X)\otimes\Q\big), 
				$$
				where $F_p$ is  the Frobenius endomorphism on the de Rham--Witt complex on $X$. Then 
				$$
				P_N(T)=1+a_1T+pa_2T+\dots +p^{k-1}a_kT^k \quad\text{with}\; a_1,\dots,a_k\in \Z,
				$$ where  $k=\deg P_N(T)$ is the $N$-th Betti number of~$X$, and for all integers $m,n\ge 1$,
				\begin{equation}\label{eq:Stienstra-CGL}
				\beta_{mp^n}+a_1\beta_{mp^{n-1}}+ pa_2\beta_{mp^{n-2}}+\dots+ p^{k-1}a_k\beta_{mp^{n-k}}\equiv 0\mod p^n .
				\end{equation} 
			\end{Theorem}
			
			\smallskip\noindent
			(iv) When $N=3$, we know that $H^{3}(X,\hat G_{m,\mathcal O_{X}})$ is a 1-CFGL.
			If $\beta_p\not\equiv 0\mod p$, i.e.\ $p$ is ordinary, by \cite[Theorem A.8(v)]{Stienstra-Beukers},
			for any $m,n\ge 1$,
			\begin{equation}
			\frac{\beta_{mp^n}}{\beta_{mp^{n-1}}}\equiv\gamma_p \mod p^n ,
			\label{2 term cong}
			\end{equation}
			where $\gamma_p\in \Z_p^\times$. Comparing with \eqref{eq:Stienstra-CGL}, when $m=n=1$, this implies that $\gamma_p=-a_1 \mod p$. When $m=1$, $n=2$, the congruence~\eqref{eq:Stienstra-CGL} translates after division by $\beta_p$ into
			$$
			\frac{\beta_{p^2}}{\beta_p}+a_1+pa_2\frac{\beta_1}{\beta_p}
			\equiv\gamma_p+a_1+\frac{pa_2}{\gamma_p}
			\equiv 0 \mod p^2,
			$$
			where \eqref{2 term cong} was used, resulting in
			$$\gamma_p^2+a_1\gamma_p+pa_2\equiv 0 \mod p^2.$$
			Continuing this inductively we deduce, for any integer $n\ge 0$,
			$$\gamma_p^k+a_1\gamma_p^{k-1}+pa_2\gamma_p^{k-2}+\dots +p^{k-1}a_k\equiv 0 \mod p^{k+n}.$$
			Thus, $P_3(1/\gamma_p)=0$ as required.     
		\end{proof}      
		
		The other thirteen cases are processed similarly.  
		
		\subsection{The formal logarithms}
		\label{ss:A.3}
		In Table~\ref{tab6} we list the coefficients of the formal logarithms
		$$
		\sum_{m\ge 1}m^{-1}b_m(\psi,\balpha) \tau^m
		$$
		for all fourteen cases $V_{\balpha}(\psi)$. \bk
		
		\begin{table}[h] 
			\caption{Coefficients $b_m(\psi,\balpha)$ of the formal logarithms}
			\begin{center}
				\begin{tabular}{|c||c|c|}
					\hline
					$(r_1,r_2)$&$b_m(\psi,\balpha)$&$\lambda$\\
					\hline\hline
					$\bigl(\tfrac15,\tfrac25\bigr)$& $\displaystyle(-5\psi)^{m-1} \sum_{k\ge 0}\binom{m-1}{5k}\frac{\vphantom{\big|^0} (\frac15)_k(\frac{2}5)_k(\frac{3}5)_k(\frac 45)_k}{k!^4}{(-1)^{5k}}\l^k$&$\psi^{-5}$\\
					\hline
					$\bigl(\tfrac1{10},\tfrac3{10}\bigr)$& $\displaystyle(-10\psi)^{m-1} \sum_{k\ge 0}\binom{m-1}{10k}\frac{\vphantom{\big|^0} (\frac1{10})_k(\frac3{10})_k(\frac{7}{10})_k(\frac 9{10})_k}{k!^4}\l^k$&$\psi^{-10}$\\
					\hline
					$\bigl(\tfrac18,\tfrac38\bigr)$& $\displaystyle(-8\psi)^{m-1} \sum_{k\ge 0}\binom{m-1}{8k}\frac{\vphantom{\big|^0} (\frac1{8})_k(\frac3{8})_k(\frac{5}{8})_k(\frac 7{8})_k}{k!^4}\l^k$&$\psi^{-8}$\\
					\hline
					$\bigl(\tfrac16,\tfrac13\bigr)$& $\displaystyle(-6\psi)^{m-1} \sum_{k\ge 0}\binom{m-1}{6k}\frac{\vphantom{\big|^0} (\frac1{6})_k(\frac1{3})_k(\frac{2}{3})_k(\frac 5{6})_k}{k!^4}\l^k$&$\psi^{-6}$\\ \hline
					$\bigl(\tfrac12,\tfrac12\bigr)$& $\displaystyle(2\psi)^{4m-4} \sum_{k\ge 0}\binom{m-1}{2k}^4\,\frac{\vphantom{\big|^0} (\frac1{2})_k^4}{k!^4}\l^k$&$\psi^{-8}$\\
					\hline
					$\bigl(\tfrac13,\tfrac13\bigr)$& $\displaystyle(3\psi)^{2m-2} \sum_{k\ge 0}\binom{m-1}{3k}^2\,\frac{\vphantom{\big|^0} (\frac1{3})_k^2(\frac 23)_k^2}{k!^4}\l^k$&$\psi^{-6}$\\
					\hline
					$\bigl(\tfrac12,\tfrac13\bigr)$& $\displaystyle(-12\psi^3)^{m-1} \sum_{k\ge 0}\binom{m-1}{2k}^2\binom{m-1}{3k}\frac{\vphantom{\big|^0} (\frac1{3})_k(\frac12)_k^2(\frac 23)_k}{k!^4}(-\l)^k$&$\psi^{-7}$\\
					\hline
					$\bigl(\tfrac12,\tfrac14\bigr)$& $\displaystyle{(8\psi^2)^{m-1}} \sum_{k\ge 0}\binom{m-1}{2k}\binom{m-1}{4k}\frac{\vphantom{\big|^0} (\frac1{4})_k(\frac1{2})_k^2(\frac 3{4})_k}{k!^4}\l^k$&$\psi^{-6}$\\
					\hline
					$\bigl(\tfrac1{12},\tfrac5{12}\bigr)$& $\displaystyle(24\psi^2)^{m-1} \sum_{k\ge 0}\binom{m-1}{2k}\binom{m-1}{12k}\frac{\vphantom{\big|^0} (\frac1{12})_k(\frac5{12})_k(\frac 7{12})_k(\frac{11}{12})_k}{k!^4}\l^k$&$\psi^{-14}$\\
					\hline
					$\bigl(\tfrac1{4},\tfrac1{4}\bigr)$& $\displaystyle(4\psi)^{2m-2} \sum_{k\ge 0}\binom{m-1}{4k}^2\,\frac{\vphantom{\big|^0} (\frac1{4})_k^2(\frac 3{4})_k^2}{k!^4}\l^k$&$\psi^{-8}$\\
					\hline
					$\bigl(\tfrac1{4},\tfrac1{6}\bigr)$& $\displaystyle(24\psi^2)^{m-1} \sum_{k\ge 0}\binom{m-1}{4k}\binom{m-1}{6k}\frac{\vphantom{\big|^0} (\frac1{6})_k(\frac1{4})_k(\frac 3{4})_k(\frac{5}{6})_k}{k!^4}\l^k$&$\psi^{-10}$\\
					\hline
					$\bigl(\tfrac1{3},\tfrac1{4}\bigr)$& $\displaystyle(12\psi^2)^{m-1} \sum_{k\ge 0}\binom{m-1}{3k}\binom{m-1}{4k}\frac{\vphantom{\big|^0} (\frac1{4})_k(\frac1{3})_k(\frac 2{3})_k(\frac{3}{4})_k}{k!^4}(-\l)^k$&$\psi^{-7}$\\
					\hline
					$\bigl(\tfrac1{6},\tfrac1{6}\bigr)$& $\displaystyle(6\psi)^{2m-2} \sum_{k\ge 0}{\binom{m-1}{6k}}^2\,\frac{\vphantom{\big|^0} (\frac1{6})_k^2(\frac 5{6})_k^2}{k!^4}\l^k$&$\psi^{-12}$\\ \hline
					$\bigl(\tfrac1{2},\tfrac1{6}\bigr)$& $\displaystyle(12\psi^2)^{m-1} \sum_{k\ge 0}\binom{m-1}{2k}\binom{m-1}{6k}\frac{\vphantom{\big|^0} (\frac1{6})_k(\frac12)_k^2(\frac 56)_k}{k!^4}\l^k$ 
					&$\psi^{-8}$\\ \hline
				\end{tabular}
			\end{center}
			\label{tab6}
		\end{table}

	\end{document}